\DeclareMathAlphabet\mathbfcal{OMS}{cmsy}{b}{n}
\newcommand{\lbm}{lattice Boltzmann~}
\newcommand{\fd}{Finite Difference~}
\newcommand{\velocitynumber}{q}
\newcommand{\consmomentsnumber}{N}
\newcommand{\spacestep}{\Delta x}
\newcommand{\timestep}{\Delta t}
\newcommand{\strong}[1]{\textbf{#1}}
\newcommand{\spatialdimensionality}{d}
\newcommand{\latticevelocity}{\lambda}
\newcommand{\reals}{\mathbb{R}}
\newcommand{\naturals}{\mathbb{N}}
\newcommand{\relatives}{\mathbb{Z}}
\newcommand{\definitionequality}{:=}
\newcommand{\lattice}{\mathfrak{L}}
\newcommand{\timelattice}{\mathfrak{Z}}
\newcommand{\velocityletter}{e}
\newcommand{\normalizedvelocityletter}{c}
\newcommand{\populationindex}{j}
\newcommand{\matricial}[1]{\bm{#1}}
\newcommand{\vectorial}[1]{\bm{#1}}
\newcommand{\integerinterval}[2]{[#1\,..\,#2]}
\newcommand{\distributionletter}{f}
\newcommand{\momentsmatrix}{\matricial{M}}
\newcommand{\lineargroup}[2]{\text{GL}_{#1}(#2)}
\newcommand{\transpose}[1]{#1^{\intercal}}
\newcommand{\momentletter}{m}
\newcommand{\identity}{I}
\newcommand{\relaxationmatrix}{\matricial{S}}
\newcommand{\matrixspace}[2]{\mathcal{M}_{#1}(#2)}
\newcommand{\diagmatrix}{\text{diag}}
\newcommand{\relaxparletter}{s}
\newcommand{\indicemoments}{i}
\newcommand{\indiceconserved}{\indicemoments}
\newcommand{\atequilibrium}{\text{eq}}
\newcommand{\timevariable}{t}
\newcommand{\spacevariable}{x}
\newcommand{\collided}{\star}
\newcommand{\shiftoperator}[1]{\mathsf{t}_{#1}}
\newcommand{\genericfunction}{f}
\newcommand{\isomorphic}{\cong}
\newcommand{\groupunitsshiftoperators}{\mathsf{T}}
\newcommand{\setfinitedifferenceoperators}{\mathsf{D}}
\newcommand{\genericunit}{\mathsf{t}}
\newcommand{\coeffgenericfinitedifference}{\alpha}
\newcommand{\sumoperators}{+}
\newcommand{\prodoperators}{\circ}
\newcommand{\basicshift}{\mathsf{x}}
\newcommand{\canonicalbasisvector}{\vectorial{e}}
\newcommand{\polynomialring}[2]{#1 [#2]}
\newcommand{\streammoments}{\matricial{\mathsf{T}}}
\newcommand{\schememoments}{\matricial{\mathsf{A}}}
\newcommand{\schemeequil}{\matricial{\mathsf{B}}}
\newcommand{\timeshiftoperator}{\mathsf{z}}
\newcommand{\tensorProductRing}[1]{\otimes_{#1}}
\newcommand{\determinant}{\text{det}}
\newcommand{\adjugate}{\text{adj}}
\newcommand{\indicetimeshift}{k}
\newcommand{\indicepolynomials}{\indicetimeshift}
\newcommand{\coeffcharact}{\mathsf{c}}
\newcommand{\polynomialunknown}{X}
\newcommand{\genericmatrix}{\matricial{C}}
\newcommand{\genericmatrixtwo}{\matricial{D}}
\newcommand{\genericmatrixthree}{\matricial{E}}
\newcommand{\genericdiscretematrix}{\matricial{\mathsf{C}}}
\newcommand{\discretetimespaceoperators}{\polynomialring{\reals}{\timeshiftoperator} \tensorProductRing{\reals} \setfinitedifferenceoperators}
\newcommand{\genericcommutativering}{\mathfrak{R}}
\newcommand{\indiceslines}{i}
\newcommand{\cutmatrixsquare}[2]{#1_{#2}}
\newcommand{\setoflines}{I}
\newcommand{\schememomentsother}{\schememoments^{\diamond}}
\newcommand{\vectorequilibrium}{\vectorial{\mathsf{b}}}
\newcommand{\indicescolumns}{j}
\newcommand{\bigO}[1]{O(#1)}
\newcommand{\timespacedifferentialoperators}{\mathcal{D}}
\newcommand{\taylorseries}{\mathcal{S}}
\newcommand{\formalpolynomialring}[2]{#1 \llbracket #2 \rrbracket}
\newcommand{\generictaylorseries}{\delta}
\newcommand{\perturbationorderindices}{r}
\newcommand{\termatorder}[2]{#1^{(#2)}}
\newcommand{\termatorderparenthesis}[2]{(#1)^{(#2)}}
\newcommand{\asymtoticequivalence}{\asymp}
\newcommand{\generictimespacediscreteop}{\mathsf{d}}
\newcommand{\duboisoperatormatrix}{\mathbfcal{G}} 
\newcommand{\duboisoperatormatrixentry}{\mathcal{G}}
\newcommand{\multiindice}{\vectorial{\nu}}
\newcommand{\multiindicemodule}[1]{|#1|}
\newcommand{\multiindicederivative}[1]{\partial^{#1}}
\newcommand{\straightderivative}[2]{\frac{\text{d}#1}{\text{d}#2}}
\newcommand{\gammaDubois}[2]{\gamma_{#1, #2}}
\newcommand{\gammaDuboisScalar}[1]{\gamma_{#1}}
\newcommand{\asymptoticschememoments}{\mathbfcal{A}}
\newcommand{\asymptoticschemeequil}{\mathbfcal{B}}
\newcommand{\asymptoticstreammoments}{\mathbfcal{T}}
\newcommand{\asymptotictimeshiftoperator}{\zeta}
\newcommand{\asymptoticschememomentsother}{\asymptoticschememoments^{\diamond}}
\newcommand{\productrelaxation}{\Pi}
\newcommand{\differentialMatrixOperator}[3]{\text{D}_{#1} \left ( #2 \right )(#3)}
\newcommand{\secondDifferentialMatrixOperator}[4]{\text{D}_{#1 #1} \left ( #2 \right )(#3)(#4)}
\newcommand{\trace}{\text{tr}}
\newcommand{\genericcontinuousmatrix}{\mathbfcal{C}}
\newcommand{\ordermaxwelliteration}[2]{#1^{[#2]}}
\newcommand{\indicesmaxwelliteration}{k}
\newcommand{\conj}[1]{\overline{#1}}
\newcommand{\diffusivityScaling}{\mu}
\newcommand{\indiceMomentsTransport}{\Omega}
\newcommand{\scheme}[2]{$\text{D}_{#1}\text{Q}_{#2}$}
\providecommand{\keywords}[1]{\textbf{\textit{Keywords: }} #1}
\providecommand{\MSC}[1]{\newline \textbf{\textit{2000 MSC: }} #1}
\theoremstyle{plain}
\newtheorem{theorem}{Theorem}[section]
\newtheorem{lemma}[theorem]{Lemma}
\newtheorem{proposition}[theorem]{Proposition}
\theoremstyle{definition}
\newtheorem{definition}[theorem]{Definition}
\newtheorem{assumptions}[theorem]{Assumptions}
\newtheorem{remark}[theorem]{Remark}
\newtheorem{example}[theorem]{Example}
\begin{document}


\title{Truncation errors and modified equations for the lattice Boltzmann method \emph{via} the corresponding Finite Difference schemes}

\author{Thomas Bellotti \\ \footnotesize{(\url{thomas.bellotti@polytechnique.edu})} \\ CMAP, CNRS, \'Ecole polytechnique, Institut Polytechnique de Paris\\
91120 Palaiseau, France.}


    \maketitle

    \begin{abstract}
        Lattice Boltzmann schemes are efficient numerical methods to solve a broad range of problems under the form of conservation laws. However, they suffer from a chronic lack of clear theoretical foundations. In particular, the consistency analysis and the derivation of the modified equations are still open issues. This has prevented, until today, to have an analogous of the Lax equivalence theorem for Lattice Boltzmann schemes.
        We propose a rigorous consistency study and the derivation of the modified equations for any \lbm scheme under acoustic and diffusive scalings. 
        This is done by passing from a kinetic (lattice Boltzmann) to a macroscopic (Finite Difference) point of view at a fully discrete level in order to eliminate the non-conserved moments relaxing away from the equilibrium.
        We rewrite the \lbm scheme as a multi-step \fd scheme on the conserved variables, as introduced in our previous contribution. We then perform the usual analyses for \fd by exploiting its precise characterization using  matrices of \fd operators.
        Though we present the derivation of the modified equations until second-order under acoustic scaling, we provide all the elements to extend it to higher orders, since the kinetic-macroscopic connection is conducted at the fully discrete level.
        Finally, we show that our strategy yields, in a more rigorous setting, the same results as previous works in the literature. 
    \end{abstract}

    \keywords{Lattice Boltzmann, Finite Difference, truncation error, consistency, modified equations}
    \MSC{65M75, 65M06, 65M12, 65M15}

\section{Introduction}

Lattice Boltzmann methods form a vast category of numerical schemes to address the approximation of the solution of Partial Differential Equations (PDEs) under the form of conservation laws, called macroscopic equations or target PDEs.
These numerical schemes act in a kinetic fashion by employing a certain number $\velocitynumber \in \naturals^{\star}$ of discrete velocities, larger than the number $\consmomentsnumber \in \naturals^{\star}$ of macroscopic equations to be solved.
The scheme proceeds \emph{via} a kinetic-like algorithm made up of two distinct steps.
The first one is a local non-linear collision phase on each site of the mesh, followed by a lattice-constrained transport which is inherently linear.
The local nature of the collision phase allows for massive parallelization of the method and the fact that the ``particles'' are constrained to dwell on the lattice allows to implement the stream phase as a pointer shift in memory.
This results in a very efficient numerical method capable of reaching problems of important size in terms of computational and memory cost.
The historical seminal papers from the end of the eighties are \cite{mcnamara1988use} and \cite{higuera1989boltzmann}, while for a general modern presentation of the \lbm schemes and their extremely broad fields of application, including hyperbolic systems of conservation laws, the quasi-incompressible Navier-Stokes equations, multi-phase systems and porous media, the interested reader can consult \cite{succi2001lattice}, \cite{guo2013lattice} and \cite{kruger2017lattice}.
The presentation of this plethora of interesting applications is however beyond the scope of our contribution.

To our understanding, the highest price to pay for this highly efficient implementation of the method is the lack of pure theoretical understanding on why the overall procedure works well at approximating the solution of the target PDEs.
This is essentially due to the fact that -- the standpoint of the \lbm schemes being kinetic \cite{simonis2020relaxation} -- the number of discrete velocities is larger than the number of macroscopic equations.
Therefore, the formal analyses for \lbm schemes available in the literature try to bridge the gap between a kinetic and a macroscopic point of view relying essentially on the quasi-equilibrium of the non-conserved variables. 
In particular, as far as the consistency with the macroscopic equations and the modified equations are concerned in the limit of small discretization parameters, two main approaches are at our disposal.
The first one is based on the Chapman-Enskog expansion \cite{chapman1990mathematical, huang1987} from statistical mechanics, shaped to the context of \lbm schemes, see for example \cite{chen1998lattice, qian2000higher, lallemand2000theory}.
The second approach features the so-called equivalent equations introduced by Dubois \cite{dubois2008equivalent,dubois2019nonlinear}, consisting in performing a Taylor expansion of the scheme both for the conserved and non-conserved moments and progressively re-inject the developments order-by-order. 
This approach has proved to yield information in accordance with the numerical simulations, see \cite{dubois2009towards, dubois2011quartic, bellotti2021high}.
Despite their proved empirical reliability and the fact that they yield the same results at the dominant orders (see \cite{dubois2019hal} for instance) these two strategies are both formal, especially for the computation of the truncation errors. Indeed, the Chapman-Enskog expansion relies on the introduction of two time variables with different scalings which are not present in the discrete \lbm scheme. Moreover, in this approach and in the method of the equivalent equations, the values of the non-conserved variables are assumed to stem from the point-wise discretization of smooth functions, whose existence and smoothness cannot be guaranteed because they are absent from the target PDEs.
Other approaches known in the literature are the asymptotic analysis under parabolic scaling deployed in \cite{junk2003rigorous, junk2005asymptotic, junk2009convergence} as well as the Maxwell iteration method \cite{yong2016theory, zhao2017maxwell}, which shares strong bonds with the equivalent equations method presented before.
The previous list of formal analysis techniques does not aim at being exhaustive (the interested reader can refer to \cite{kruger2017lattice}) and one should be aware that, despite efforts in this direction \cite{caiazzo2009comparison}, there is no consensus on which is the right method to use \cite{kruger2017lattice}.

    \begin{figure}
      \begin{center}
      \begin{normalsize}
      \begin{equation*}
      \begin{tikzcd}[row sep=large, column sep=0.6in]
        \substack{\text{\strong{LBM} (kinetic)} \\ (\velocitynumber \text{ discrete equations})} \arrow[dd, "\cite{bellotti2021fd}"] \arrow[dr, dashrightarrow, "\substack{\text{Small }\Delta t /  \Delta x /\epsilon \\ \cite{chen1998lattice, qian2000higher, lallemand2000theory} \\ \cite{dubois2008equivalent,dubois2019nonlinear,yong2016theory}}"] & & \phantom{A} \\
& \substack{\text{\strong{Moment expanded equations}} \\ (\velocitynumber \text{ continuous equations})}  \arrow[dr, dashrightarrow, "\substack{\text{Quasi-equilibrium} \\ \\ \cite{chen1998lattice, qian2000higher, lallemand2000theory} \\ \cite{dubois2008equivalent,dubois2019nonlinear,yong2016theory} }"] & \phantom{A} \\
\substack{\text{\strong{FD} (macroscopic)} \\ (\consmomentsnumber<\velocitynumber \text{ discrete equations})} \arrow[rr, "\substack{\text{Small }\Delta t /  \Delta x \\ \cite{strikwerda2004finite,allaire2007numerical,warming1974modified, carpentier1997derivation}}"] & & \substack{\text{\strong{Macroscopic equations}}/\text{\strong{target PDEs}} \\ (\consmomentsnumber<\velocitynumber \text{ continuous PDEs}) \\ \text{\strong{Modified equations}}} \\
      \end{tikzcd}
      \end{equation*}
      \end{normalsize}
      \end{center}\caption{\label{fig:PlanOfTheWork}Different paths to recover the macroscopic equations and the modified equations. The formal approaches available in the literature \cite{chen1998lattice, qian2000higher, lallemand2000theory, dubois2008equivalent,dubois2019nonlinear,yong2016theory} rely on the path marked with dashed arrows. They perform Taylor expansions for small discretization parameters and then utilize the quasi-equilibrium of the non-conserved moments to get rid of them. Our way of proceeding is marked with full arrows: we eliminate exactly the non-conserved moments at the discrete level as in \cite{bellotti2021fd} and we perform the usual analyses for \fd schemes as in \cite{strikwerda2004finite,allaire2007numerical, warming1974modified, carpentier1997derivation}.}
      \end{figure}

    A staple of all the previously mentioned approaches is that the expansion for the discretization parameters (time and space steps) tending to zero is performed on the kinetic numerical scheme, where both conserved and non-conserved variables are present. Eventually, the non-conserved variables are formally eliminated from the continuous formulation by scaling arguments, so to speak, using quasi-equilibrium. 
    This corresponds to follow the diagonal path on \Cref{fig:PlanOfTheWork}.
    In this contribution, we develop the other path, namely the top-down movement followed by the left-right one on \Cref{fig:PlanOfTheWork}.
    In particular, in order to fill the hollow between \lbm schemes and traditional approaches known to numerical analysts, such as \fd schemes, we recently introduced \cite{bellotti2021fd} a formalism to recast any \lbm scheme, regardless of its linearity, as a multi-step \fd scheme solely on the conserved moments.
    It should be stressed that our standpoint, where \lbm schemes are studied in terms of their \fd counterpart, must not be seen as the right way of implementing them, because one would lose most of the previously mentioned computational efficiency coming from the kinetic vision.
    Conversely, our way of writing the scheme should be seen as a sort of one-way mathematical transform to pass from a kinetic standpoint to a macroscopic one in a purely discrete setting.
    The elimination of the non-conserved moments is carried exactly on the discrete formulation by algebraic devices, thus independently from the time-space scaling. The price to pay for the non-conserved moments relaxing away from the equilibria is the multi-step nature of the \fd scheme.
    In our previous proposal \cite{bellotti2021fd}, it has been crucial to be able to provide, thanks to a systematic mathematical approach, a precise description of the main ingredient needed to reduce the \lbm scheme to a \fd scheme, namely the characteristic polynomial of matrices of \fd operators. 
    We are therefore allowed to utilize this characteristic polynomial as a tool satisfying certain properties alone from the particular underlying \lbm scheme.
    Quite the opposite, using the algorithm proposed by \cite{fuvcik2021equivalent}, one is compelled to explicitly write down the corresponding \fd scheme in order to perform the Taylor expansions to recover the target PDEs. In our case, the mathematical understanding that we \emph{a priori} have on the corresponding (macroscopic) \fd schemes, regardless of the (kinetic) \lbm scheme they stand for, allows the following theoretical discussion. 
    The theory of \fd schemes features two important notions. One is the concept of truncation error (Definition 5.1.3 \cite{gustafsson1995time} or Definition 2.2.4 \cite{allaire2007numerical}), which is rigorous and is the basic ingredient to prove the celebrated Lax equivalence theorem \cite{lax1956survey}. The computations of the truncation error are perfectly justified because of the existence and smoothness results on the target PDEs (\emph{e.g.} transport equation with smooth initial datum, Burgers equation with smooth non-decreasing initial datum, \emph{etc.}). The second one is the concept of modified equation \cite{warming1974modified, carpentier1997derivation}, which is formal. The modified equations are those which the numerical scheme is ``more consistent'' with, compared to the target PDEs, and thus they yield essential but formal information on the behavior of the scheme.  
    The modified equations cannot be fully justified even for \fd schemes because they assume that smooth functions which equal the discrete solution of the scheme at the grid points exist.

    The main findings presented in the paper are the following. 
    \begin{itemize}
      \item We propose a procedure to rigorously analyze the consistency (\emph{i.e.} the truncation error) of any \lbm scheme \emph{via} its corresponding \fd scheme.
      \item In the case of acoustic and diffusive scaling between time and space discretizations, we rigorously find the expression of the target PDEs approximated by any scheme and the truncation error. For the acoustic scaling, we also write the formal modified equations up to order two.
      \item Under acoustic scaling, these modified equations are the same than the ones obtained by \cite{dubois2019nonlinear} until second order.
      \item We rewrite the Maxwell iteration \cite{yong2016theory, zhao2017maxwell} for general \lbm schemes. This allows to show that both for the acoustic and diffusive scaling, the modified equations obtained through the corresponding \fd scheme are the same than the ones from the Maxwell iteration at any order.
    \end{itemize}
  Our derivation of the truncation errors is rigorous -- as the ones for \fd schemes -- and the formal modified equations rely on less unjustified assumptions than the existing approaches, for two main reasons. The first one is that Taylor expansions are applied to the conserved moments only, which also appear in the macroscopic equations. Therefore, one only postulates that the discrete conserved moments stem from the point-wise evaluation of smooth functions. The second one is that we solely rely on the link between time and space steps as the lattices are refined and which must be specified for any time-space numerical method.
    
    The paper is structured as follows.
In \Cref{sec:LBMSchemes}, we set notations and assumptions concerning the \lbm schemes we shall work with. 
\Cref{sec:FDforLBMSchemes} is devoted to recall the main results from our previous work \cite{bellotti2021fd} concerning the recast of any \lbm scheme as a \fd scheme. These results are then stated in a slightly different manner, facilitating the following analysis.
The main results of the work are stated in \Cref{sec:MainResults} and come under the form of two theorems. The proof of the first is detailed in \Cref{sec:DetailedProofs} under the assumption of dealing with one conservation law, for the sake of keeping the presentation and the notations as simple as possible.
In \Cref{sec:ExtansionSeveralConservedMoments}, we indicate how the previous proof is easily extended to several conservation laws, whereas \Cref{sec:LinkExistingApproaches} is devoted to hint the links with some available approaches to find the modified equations of \lbm schemes.
The conclusions and perspectives of this work are drawn in \Cref{sec:Conclusions}.

\section{Lattice Boltzmann schemes}\label{sec:LBMSchemes}

To start our contribution, we present the classical framework of the  multiple-relaxation-times (known as MRT) \lbm schemes, as introduced by \cite{dhumieres1992}.
For the sake of simplicity, we do not consider source terms which can be effortlessly introduced in the analysis.
This fixes the perimeter of the schemes we shall be allowed to treat and study in the sequel.

\subsection{Spatial and temporal discretization}
We set the problem in spatial dimension $\spatialdimensionality = 1, 2, 3$ considering the whole space $\reals^{\spatialdimensionality}$ since this work is not focused on the enforcement of boundary conditions.
All the following material is valid only under the assumption of working on unbounded domains or sufficiently far from a boundary.
The space is discretized by a $\spatialdimensionality$-dimensional lattice denoted $\lattice \definitionequality \spacestep \relatives^{\spatialdimensionality}$ with constant step $\spacestep > 0$.
The time is uniformly discretized with step $\timestep > 0$, rendering a time lattice $\timelattice \definitionequality \timestep \naturals$. The role of the initial conditions is not investigated and is a subject on its own, see \cite{van2009smooth, rheinlander2007analysis}.
We introduce the so-called ``lattice velocity'' $\latticevelocity > 0$ defined by $\latticevelocity \definitionequality \spacestep/\timestep$.
Observe that in the sequel, namely in \Cref{sec:MainResults}, we shall introduce particular relations between space step $\spacestep$ and $\timestep$ when $\spacestep \to 0$, in order to provide the main results of the work, as done in \cite{dubois2008equivalent,dubois2019nonlinear}.
However, until the end of \Cref{sec:FDforLBMSchemes}, the discussion remains valid for any choice of these parameters.

\subsection{Discrete velocities}

The discrete velocities are an essential ingredient of any lattice Boltzmann scheme. One has to choose $(\vectorial{\velocityletter}_{\populationindex})_{\populationindex = 1}^{\populationindex = \velocitynumber} \subset \reals^{\spatialdimensionality}$ with $\velocitynumber \in \naturals^{\star}$, discrete velocities, which are multiple of the lattice velocity $\latticevelocity$, namely $\vectorial{\velocityletter}_{\populationindex} = \latticevelocity \vectorial{\normalizedvelocityletter}_{\populationindex}$ for any $\populationindex \in \integerinterval{1}{\velocitynumber}$\footnote{This shall be a notation to indicate closed intervals of integers, namely for $a, b \in \relatives$ with $a \leq b$, then $\integerinterval{a}{b} \definitionequality \{ a, a+1, \dots, b\}$.} with $(\vectorial{\normalizedvelocityletter}_{\populationindex})_{\populationindex = 1}^{\populationindex = \velocitynumber} \subset \relatives^{\spatialdimensionality}$. 
Thus, the virtual particles are stuck to the lattice $\lattice$ at each time step of the method.
We denote the distribution density of the virtual particles moving with velocity $\vectorial{\velocityletter}_{\populationindex}$ by $\distributionletter_{\populationindex} = \distributionletter_{\populationindex}(\timevariable, \vectorial{\spacevariable})$ for every $\populationindex \in \integerinterval{1}{\velocitynumber}$, depending on the time and space variables.

\subsection{Lattice Boltzmann algorithm: collide and stream}
As mentioned in the Introduction, any lattice Boltzmann scheme consists in a kinetic algorithm made up of two phases: a local collision phase performed on each site of the lattice $\lattice$ and a stream phase where particles are exchanged between different sites of the lattice. 
Let us independently introduce them.

\begin{itemize}
    \item \emph{Collision phase}.
      We adopt the general point of view of the multiple-relaxation-times schemes, where the collision phase is written as a diagonal relaxation towards some equilibria in the moments basis, see \cite{dhumieres1992}.
      We introduce a change of basis called moment matrix $\momentsmatrix \in \lineargroup{\velocitynumber}{\reals}$.
      Gathering the distributions into $\vectorial{\distributionletter} = \transpose{(\distributionletter_{1}, \dots, \distributionletter_{\velocitynumber})}$, the moments are recovered by $\vectorial{\momentletter} = \momentsmatrix \vectorial{\distributionletter}$ and \emph{vice versa}.
      We also introduce
      \begin{itemize}
          \item the matrix $\matricial{\identity} \in \lineargroup{\velocitynumber}{\reals}$, the identity matrix of size $\velocitynumber$;
          \item the matrix $\relaxationmatrix \in \matrixspace{\velocitynumber}{\reals}$, called relaxation matrix. This matrix is diagonal with $\consmomentsnumber \in \integerinterval{1}{ \velocitynumber - 1}$ being the number of conserved moments $\relaxationmatrix = \diagmatrix(\relaxparletter_{1}, \dots, \relaxparletter_{\consmomentsnumber}, \relaxparletter_{\consmomentsnumber + 1}, \dots, \relaxparletter_{\velocitynumber})$, where $\relaxparletter_{\indiceconserved} \in \reals$ for $\indiceconserved \in \integerinterval{1}{\consmomentsnumber}$ for the conserved moments and $\relaxparletter_{\indicemoments} \in ]0, 2]$ for $\indicemoments \in \integerinterval{\consmomentsnumber + 1}{\velocitynumber}$, see \cite{dubois2008equivalent}, for the non-conserved ones.
          Observe that the relaxation parameters corresponding to the conserved moments do not play any role in the \lbm algorithm, therefore the matrix $\relaxationmatrix$ can be singular, without any specific issue.
          In particular, we shall prove in \Cref{sec:InvarianceChoiceRelaxationParamtersConserved} that the choice of relaxation parameter for the conserved variables does not have any influence on the outcomes presented in this work.
          For the sake of presentation, we start numbering the moments by the conserved ones;
          \item we employ the notation $\vectorial{\momentletter}^{\atequilibrium}(\timevariable, \vectorial{\spacevariable}) = \vectorial{\momentletter}^{\atequilibrium} (\momentletter_{1} (\timevariable, \vectorial{\spacevariable}), \dots, \momentletter_{\consmomentsnumber} (\timevariable, \vectorial{\spacevariable}))$ for $\timevariable \in \timelattice$ and $\vectorial{\spacevariable} \in \lattice$, where $\vectorial{\momentletter}^{\atequilibrium}: \reals^{\consmomentsnumber} \to \reals^{\velocitynumber}$ are possibly non-linear functions of the conserved moments.
          In order to guarantee that the first $\consmomentsnumber$ moments are conserved through the collision process, irrespective of the values of $\relaxparletter_{1}, \dots, \relaxparletter_{\consmomentsnumber}$, the constraints
          \begin{equation}\label{eq:EquilibriumConservedMoments}
              \momentletter_{\indiceconserved}^{\atequilibrium}(\momentletter_1, \dots, \momentletter_{\consmomentsnumber}) = \momentletter_{\indiceconserved}, \qquad \forall \indiceconserved \in \integerinterval{1}{\consmomentsnumber},
          \end{equation}
          must hold \cite{bouchut2000diffusive}.
      \end{itemize}
      Let $\timevariable\in \timelattice$ and $\vectorial{\spacevariable} \in \lattice$, the collision phase reads, denoting by $\collided$ any post-collision state
      \begin{equation}\label{eq:CollisionPhase}
        \vectorial{\momentletter}^{\collided}(\timevariable, \vectorial{\spacevariable}) = (\matricial{I} - \relaxationmatrix) \vectorial{\momentletter}(\timevariable, \vectorial{\spacevariable}) + \relaxationmatrix \vectorial{\momentletter}^{\atequilibrium}(\timevariable, \vectorial{\spacevariable}).
      \end{equation}
      \item \emph{Stream phase}.
      The stream phase is diagonal in the space of the distributions and consist in an exact upwind advection of the particle distribution densities. It can be written, for $\timevariable \in \timelattice$ and $\vectorial{\spacevariable} \in \lattice$, as
      \begin{equation}\label{eq:StreamPhase}
        \distributionletter_{\populationindex} (\timevariable + \timestep, \vectorial{\spacevariable}) = \distributionletter_{\populationindex}^{\collided} (\timevariable, \vectorial{\spacevariable} - \vectorial{\normalizedvelocityletter}_{\populationindex} \spacestep),
      \end{equation}
      for any $\populationindex \in \integerinterval{1}{\velocitynumber}$.
\end{itemize}

\section{Finite Difference formulation of a \lbm scheme}\label{sec:FDforLBMSchemes}

Having defined the \lbm schemes, we briefly introduce the setting allowing us to rewrite any \lbm scheme (kinetic) as a multi-step \fd scheme (macroscopic) on the $\consmomentsnumber$ conserved moments of interest.
The interested reader can refer to our previous contribution \cite{bellotti2021fd} for more details.
Then, the formulation of the multi-step \fd scheme is given using a more compact notation which is more suitable to the following discussion.
We start with the assumptions needed in the sequel.
\begin{assumptions}[\fd assumptions]\label{ass:FDAssumptionsDependencies}
    The entries of $\momentsmatrix$ and $\relaxationmatrix$ can depend on $\spacestep$ and/or on $\timestep$ but cannot be a function of the time and space variables.
\end{assumptions}

\subsection{Algebraic setting}
Let us first introduce the necessary algebraic setting.
In particular, we define the shift operators associated with each discrete velocity as well as the derived \fd operators in space.
In the following Definition, the time variable does not play any role since kept frozen, thus it is not listed for the sake of readability.
\begin{definition}[Shift and \fd operators in space]\label{def:ShiftandFDOperators}
    Let $\vectorial{z} \in \relatives^{\spatialdimensionality}$, then the associated shift operator on the lattice $\lattice$, denoted $\shiftoperator{\vectorial{z}}$, is defined in the following way.
    Take $\genericfunction: \lattice \to \reals$ be any function defined on the lattice, then the action of $\shiftoperator{\vectorial{z}}$ is
    \begin{displaymath}
      (\shiftoperator{\vectorial{z}}\genericfunction)(\vectorial{\spacevariable}) = \genericfunction(\vectorial{\spacevariable} - \vectorial{z} \spacestep), \qquad \forall \vectorial{\spacevariable} \in \lattice.
    \end{displaymath}
    We also introduce $\groupunitsshiftoperators \definitionequality \{\shiftoperator{\vectorial{z}} \quad \text {with} \quad \vectorial{z} \in \relatives^{\spatialdimensionality} \} \isomorphic \relatives^{\spatialdimensionality}$.
    The product $\prodoperators : \groupunitsshiftoperators  \times \groupunitsshiftoperators  \to \groupunitsshiftoperators $ of two shift operators is defined by
    \begin{equation*}
        \shiftoperator{\vectorial{z}} \prodoperators \shiftoperator{\vectorial{w}} \definitionequality \shiftoperator{\vectorial{z} + \vectorial{w}}, \qquad \forall \vectorial{z}, \vectorial{w} \in \relatives^{\spatialdimensionality}.
    \end{equation*}
    The set of \fd operators on the lattice $\lattice$ is defined as
    \begin{equation}\label{eq:CompactSupport}
      \setfinitedifferenceoperators \definitionequality \reals \groupunitsshiftoperators =  \left \{ \sum\nolimits_{\genericunit \in \groupunitsshiftoperators} \coeffgenericfinitedifference_{\genericunit} \genericunit, \quad\text{where} \quad \coeffgenericfinitedifference_{\genericunit} \in \reals \quad \text{and} \quad \coeffgenericfinitedifference_{\genericunit} = 0 \quad \text{almost everywhere} \right  \},
    \end{equation}
    the group ring (or group algebra) of $\groupunitsshiftoperators$ over $\reals$.
    The sum $\sumoperators : \setfinitedifferenceoperators \times \setfinitedifferenceoperators \to \setfinitedifferenceoperators$ and the product $\prodoperators : \setfinitedifferenceoperators \times \setfinitedifferenceoperators \to \setfinitedifferenceoperators$ of two elements are defined by
      \begin{equation*}
        \left (\sum_{\genericunit \in \groupunitsshiftoperators} \alpha_{\genericunit} \genericunit \right ) \sumoperators \left (\sum_{\genericunit \in \groupunitsshiftoperators} \beta_{\genericunit} \genericunit \right ) = \sum_{\genericunit \in \groupunitsshiftoperators} (\alpha_{\genericunit} + \beta_{\genericunit}) \genericunit, \qquad
        \left (\sum_{\genericunit \in \groupunitsshiftoperators} \alpha_{\genericunit} \genericunit \right ) \prodoperators \left (\sum_{\mathsf{h} \in \groupunitsshiftoperators} \beta_{\mathsf{h}} \mathsf{h} \right ) = \sum_{\genericunit, \mathsf{h} \in \groupunitsshiftoperators} (\alpha_{\genericunit}\beta_{\mathsf{h}}) (\genericunit \prodoperators \mathsf{h}).
      \end{equation*}
      Furthermore, the product of $\sigma \in \reals$ with elements of $\setfinitedifferenceoperators$ is given by
      \begin{equation*}
        \sigma \left (\sum_{\genericunit \in \groupunitsshiftoperators} \alpha_{\genericunit} \genericunit \right ) = \sum_{\genericunit \in \groupunitsshiftoperators} (\sigma \alpha_{\genericunit})\genericunit .
      \end{equation*}
  \end{definition}
  In the sequel, the products $\prodoperators$ are understood.
  \begin{remark}
    We could achieve exactly the same construction, following Chapter 2 in \cite{cheng2003partial}, by considering functions on the lattice $\lattice$ as sequences and the \fd operators as sequences with compact support (whence the almost everywhere requirement in \eqref{eq:CompactSupport}). Then, the product $\prodoperators$ can be seen as a convolution (Cauchy) product between compactly supported sequences and the action of a \fd operator on a function as the convolution of a finitely supported sequence with a generic sequence.    
  \end{remark}
      Upon introducing the generating displacements along each axis $\basicshift_{k} \definitionequality \shiftoperator{\canonicalbasisvector_k}$ where $\canonicalbasisvector_k$ is the $k$-th vector of the canonical basis, for any $k \in \integerinterval{1}{\spatialdimensionality}$, we can isomorphically identify $\setfinitedifferenceoperators \isomorphic \polynomialring{\reals}{\basicshift_1, \basicshift_1^{-1}, \dots, \basicshift_{\spatialdimensionality}, \basicshift_{\spatialdimensionality}^{-1}}$, the ring of multivariate Laurent polynomials.
      The real numbers $\reals$ can be viewed as sub-ring of $\setfinitedifferenceoperators$, being the constant polynomials.
      This identification can be somehow interpreted as the historical starting point of umbral calculus \cite{roman2005umbral}, also known as calculus of Finite Differences \cite{miller1960finitedifference}: allow to interchange indices in sequences (operators or functions) with exponents (in polynomials).
  The stream phase \eqref{eq:StreamPhase} can be recast under its non-diagonal form in the space of moments \cite{yong2016theory,dubois2019nonlinear} by introducing what we call the moments-stream matrix $\streammoments \definitionequality \momentsmatrix \diagmatrix(\shiftoperator{\vectorial{\normalizedvelocityletter}_{1}}, \dots, \shiftoperator{\vectorial{\normalizedvelocityletter}_{\velocitynumber}}) \momentsmatrix^{-1} \in \matrixspace{\velocitynumber}{\setfinitedifferenceoperators}$ and merged with the collision phase \eqref{eq:CollisionPhase} to obtain the scheme, for any $\timevariable \in \timelattice$ and for any $\vectorial{\spacevariable} \in \lattice$
  \begin{equation}\label{eq:SchemeAB}
    \vectorial{\momentletter}(\timevariable + \timestep, \vectorial{\spacevariable}) = \schememoments \vectorial{\momentletter}(\timevariable, \vectorial{\spacevariable}) + \schemeequil \vectorial{\momentletter}^{\atequilibrium}(\timevariable, \vectorial{\spacevariable}),
  \end{equation}
  where $\schememoments \definitionequality \streammoments (\vectorial{\identity} - \relaxationmatrix) \in \matrixspace{\velocitynumber}{\setfinitedifferenceoperators}$ and $\schemeequil \definitionequality \streammoments \relaxationmatrix \in \matrixspace{\velocitynumber}{\setfinitedifferenceoperators}$.

  \subsection{Corresponding \fd schemes}
  With this new compact algebraic form of any \lbm scheme, namely \eqref{eq:SchemeAB}, we are able to recall the main results proved in \cite{bellotti2021fd}.
  These results encompass the findings from \cite{suga2010accurate}, \cite{dellacherie2014construction} and \cite{fuvcik2021equivalent}.
  The version for one conserved moment can be formulated as follows.

  \begin{proposition}[Corresponding \fd scheme for $\consmomentsnumber = 1$, \cite{bellotti2021fd}]\label{prop:ReductionFiniteDifferenceGeneralOld}
    Consider $\consmomentsnumber = 1$. Then the \lbm scheme given by \eqref{eq:SchemeAB} corresponds to a multi-step explicit macroscopic \fd scheme on the conserved moment $\momentletter_1$ under the form
    \begin{equation} \label{eq:FDSchemeOldPaper}
      \momentletter_{1} (\timevariable + \timestep, \vectorial{\spacevariable}) = -\sum_{\indicetimeshift = 0}^{\velocitynumber - 1} \coeffcharact_{\indicetimeshift} \momentletter_{1} (\timevariable +  (1 - \velocitynumber + \indicetimeshift) \timestep, \vectorial{\spacevariable}) + \left (\sum_{\indicetimeshift = 0}^{\velocitynumber-1} \left ( \sum_{\ell=0}^{\indicetimeshift} \coeffcharact_{\velocitynumber + \ell - \indicetimeshift} \schememoments^{\ell} \right ) \schemeequil \vectorial{\momentletter}^{\atequilibrium} (\timevariable - \indicetimeshift \timestep, \vectorial{\spacevariable}) \right )_1, 
    \end{equation}
    for all $\timevariable \in \timelattice$ and for all $\vectorial{\spacevariable} \in \lattice$, where $(\coeffcharact_{\indicepolynomials})_{\indicepolynomials = 0}^{\indicepolynomials = \velocitynumber}\subset \setfinitedifferenceoperators$ are the coefficients of $\determinant (\polynomialunknown \matricial{\identity} - \schememoments) = \sum_{\indicepolynomials = 0}^{\indicepolynomials = \velocitynumber} \coeffcharact_{\indicepolynomials} \polynomialunknown^{\indicepolynomials}$, the characteristic polynomial of $\schememoments$, with $\determinant(\cdot)$ indicating the determinant of a matrix.
  \end{proposition}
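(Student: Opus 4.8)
The plan is to eliminate the non-conserved moments \emph{exactly}, by purely algebraic means, exploiting that all the operators involved live in a \emph{commutative} ring, so that the classical adjugate identity and the Cayley--Hamilton theorem are available. First I would introduce the time-shift operator $\timeshiftoperator$ acting by $\timeshiftoperator \momentletter_1(\timevariable, \vectorial{\spacevariable}) = \momentletter_1(\timevariable + \timestep, \vectorial{\spacevariable})$ and, more generally, $\timeshiftoperator^{\indicetimeshift}$ shifting time by $\indicetimeshift \timestep$. Reading \Cref{eq:SchemeAB} through this operator turns the scheme into the linear system $(\timeshiftoperator \matricial{\identity} - \schememoments)\vectorial{\momentletter} = \schemeequil \vectorial{\momentletter}^{\atequilibrium}$ posed over the commutative ring $\discretetimespaceoperators$, the entries of $\schememoments$ and $\schemeequil$ being finite difference operators in $\setfinitedifferenceoperators$ and $\timeshiftoperator$ an extra indeterminate.

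The central step is to multiply this system on the left by the adjugate matrix $\adjugate(\timeshiftoperator \matricial{\identity} - \schememoments)$. Since $\discretetimespaceoperators$ is commutative, the identity $\adjugate(\genericmatrix)\genericmatrix = \determinant(\genericmatrix)\matricial{\identity}$ is valid, which yields
\[
  \charactpolynomial_{\schememoments}(\timeshiftoperator)\vectorial{\momentletter} = \adjugate(\timeshiftoperator \matricial{\identity} - \schememoments)\schemeequil \vectorial{\momentletter}^{\atequilibrium},
\]
because $\determinant(\timeshiftoperator \matricial{\identity} - \schememoments) = \charactpolynomial_{\schememoments}(\timeshiftoperator)$ is a scalar of the ring and thus acts componentwise. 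As $\consmomentsnumber = 1$, the first component of this vector identity is already a closed scalar equation featuring only $\momentletter_1$ on the left, the non-conserved moments having been eliminated exactly.

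It then remains to make both sides explicit. For the left-hand side I would substitute $\charactpolynomial_{\schememoments}(\timeshiftoperator) = \sum_{\indicetimeshift=0}^{\velocitynumber}\coeffcharact_{\indicetimeshift}\timeshiftoperator^{\indicetimeshift}$ with $\coeffcharact_{\velocitynumber} = 1$. For the adjugate I would establish the expansion
\[
  \adjugate(\timeshiftoperator \matricial{\identity} - \schememoments) = \sum_{\indicetimeshift=0}^{\velocitynumber-1}\Bigl(\sum_{\ell=0}^{\velocitynumber-1-\indicetimeshift}\coeffcharact_{\indicetimeshift+1+\ell}\schememoments^{\ell}\Bigr)\timeshiftoperator^{\indicetimeshift},
\]
obtained by writing $\adjugate(\timeshiftoperator \matricial{\identity} - \schememoments) = \sum_{\indicetimeshift} \matricial{B}_{\indicetimeshift}\timeshiftoperator^{\indicetimeshift}$, inserting it into $(\timeshiftoperator \matricial{\identity} - \schememoments)\adjugate(\timeshiftoperator \matricial{\identity} - \schememoments) = \charactpolynomial_{\schememoments}(\timeshiftoperator)\matricial{\identity}$, and matching powers of $\timeshiftoperator$. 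This gives the downward recursion $\matricial{B}_{\velocitynumber-1} = \matricial{\identity}$ and $\matricial{B}_{\indicetimeshift-1} = \coeffcharact_{\indicetimeshift}\matricial{\identity} + \schememoments \matricial{B}_{\indicetimeshift}$, solved in closed form as above; the consistency of the remaining constant term is guaranteed precisely by Cayley--Hamilton $\charactpolynomial_{\schememoments}(\schememoments) = \matricial{0}$.

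Finally I would evaluate the scalar identity at time $\timevariable - (\velocitynumber-1)\timestep$, so that the top-degree term $\coeffcharact_{\velocitynumber}\timeshiftoperator^{\velocitynumber}\momentletter_1$ becomes $\momentletter_1(\timevariable + \timestep, \vectorial{\spacevariable})$, isolate it on the left, and reindex the right-hand side through $\indicetimeshift \mapsto \velocitynumber - 1 - \indicetimeshift$ to recover the stated time windows $(1-\velocitynumber+\indicetimeshift)\timestep$ and $-\indicetimeshift\timestep$ together with the inner coefficients $\coeffcharact_{\velocitynumber+\ell-\indicetimeshift}$. I expect the only genuinely delicate point to be the bookkeeping of the adjugate expansion and of the subsequent shift-and-reindex, so that the two summation ranges line up exactly with the statement; the algebra itself is routine once commutativity of $\discretetimespaceoperators$ licenses the adjugate identity and Cayley--Hamilton.
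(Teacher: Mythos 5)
Your proposal is correct and follows essentially the same route as the paper: pass to the operatorial form $(\timeshiftoperator \matricial{\identity} - \schememoments)\vectorial{\momentletter} = \schemeequil\vectorial{\momentletter}^{\atequilibrium}$ over the commutative ring $\discretetimespaceoperators$, multiply by the adjugate and use $\adjugate(\genericmatrix)\genericmatrix = \determinant(\genericmatrix)\matricial{\identity}$ to get the scalar equation on $\momentletter_1$ (this is exactly the paper's proof of \Cref{prop:ReductionFiniteDifferenceGeneralOld} bis), then expand $\adjugate(\timeshiftoperator\matricial{\identity} - \schememoments)$ \emph{via} the Horner-type recursion closed by Cayley--Hamilton, which is precisely the paper's remark relating adjugate and characteristic polynomial, and conclude by a temporal shift and reindexing. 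Your bookkeeping of the coefficients $\coeffcharact_{\velocitynumber+\ell-\indicetimeshift}$ and of the time windows checks out against \Cref{eq:FDSchemeOldPaper}.
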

  The proof -- given in \cite{bellotti2021fd} -- relies on the fact that $\setfinitedifferenceoperators$ is a commutative ring and that therefore the Cayley-Hamilton theorem \cite{brewer1986linear}, stipulating that any square matrix with entries in a commutative ring annihilates its characteristic polynomial, holds.

  This result is easily generalized to the case of multiple conservation laws, namely $\consmomentsnumber > 1$.
  For this, let us introduce a new notation. 
  For any square matrix $\genericmatrix \in \matrixspace{\velocitynumber}{\genericcommutativering}$ on a commutative ring $\genericcommutativering$, consider $\cutmatrixsquare{\genericmatrix}{\setoflines} \definitionequality  ( \sum_{\indiceslines \in \setoflines} \canonicalbasisvector_{\indiceslines} \otimes \canonicalbasisvector_{\indiceslines}  ) \genericmatrix   ( \sum_{\indiceslines \in \setoflines} \canonicalbasisvector_{\indiceslines} \otimes \canonicalbasisvector_{\indiceslines}  ) \in \matrixspace{\velocitynumber}{\genericcommutativering}$ for any $\setoflines \subset \integerinterval{1}{\velocitynumber}$, corresponding to the matrix where only the entries with row and column indices in $\setoflines$ are kept and the remaining ones are set to zero.
  Then we have the following statement.
  \begin{proposition}[Corresponding \fd schemes for $\consmomentsnumber \geq 1$, \cite{bellotti2021fd}]\label{prop:ReductionFiniteDifferenceNGeq1Old}
    Consider $\consmomentsnumber \geq 1$. Then the \lbm scheme given by \eqref{eq:SchemeAB} corresponds to a family of multi-step explicit macroscopic \fd schemes on the conserved moments $\momentletter_{1}, \dots, \momentletter_{\consmomentsnumber}$.
    This is, for any $\indiceconserved \in \integerinterval{1}{\consmomentsnumber}$
    \begin{align}
      \momentletter_{\indiceconserved} (\timevariable + \timestep, \vectorial{\spacevariable}) = -\sum_{\indicetimeshift = 0}^{\velocitynumber - \consmomentsnumber} \coeffcharact_{\indiceconserved, \indicetimeshift} \momentletter_{\indiceconserved} (\timevariable + (\indicetimeshift - \velocitynumber + \consmomentsnumber) \timestep, \vectorial{\spacevariable{}}) &+ \left ( \sum_{\indicetimeshift = 0}^{\velocitynumber-\consmomentsnumber} \left ( \sum_{\ell=0}^{\indicetimeshift} \coeffcharact_{\indiceconserved, \velocitynumber +1 - \consmomentsnumber + \ell - \indicetimeshift} \schememoments_{\indiceconserved}^{\ell} \right ) \schememomentsother_{\indiceconserved}\vectorial{\momentletter}(\timevariable - \indicetimeshift \timestep, \vectorial{\spacevariable}) \right )_{\indiceconserved} \label{eq:FDSchemeOldPaperNGeq1} \\
      &+ \left ( \sum_{\indicetimeshift = 0}^{\velocitynumber-\consmomentsnumber} \left ( \sum_{\ell=0}^{\indicetimeshift} \coeffcharact_{\indiceconserved, \velocitynumber + 1 - \consmomentsnumber + \ell - \indicetimeshift} \schememoments_{\indiceconserved}^{\ell} \right ) \schemeequil \vectorial{\momentletter}^{\atequilibrium}(\timevariable - \indicetimeshift \timestep, \vectorial{\spacevariable}) \right )_{\indiceconserved}, \nonumber
    \end{align} 
    for all $\timevariable \in \timelattice$ and $ \vectorial{\spacevariable} \in \lattice$, where $ \schememoments_{\indiceconserved} \definitionequality \cutmatrixsquare{\schememoments}{\{ \indiceconserved \} \cup \integerinterval{\consmomentsnumber + 1}{\velocitynumber}}$ and $ \schememomentsother_{\indiceconserved} \definitionequality \schememoments - \schememoments_{\indiceconserved}$, with  $(\coeffcharact_{\indiceconserved, \indicepolynomials})_{\indicepolynomials = 0}^{\indicepolynomials = \velocitynumber + 1 -\consmomentsnumber}\subset \setfinitedifferenceoperators$ being the coefficients of $\determinant (\polynomialunknown \matricial{\identity} - \schememoments_{\indiceconserved}) = \polynomialunknown^{\consmomentsnumber - 1} \sum_{\indicepolynomials = 0}^{\indicepolynomials = \velocitynumber + 1 - \consmomentsnumber}\coeffcharact_{\indiceconserved, \indicepolynomials} \polynomialunknown^{\indicepolynomials} $, the characteristic polynomial of $ \schememoments_{\indiceconserved} $.
  \end{proposition}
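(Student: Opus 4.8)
\emph{Strategy.} The plan is to reduce, for each fixed conserved index $\indiceconserved \in \integerinterval{1}{\consmomentsnumber}$, the present multi-conservation setting to the one conserved moment case already established in \Cref{prop:ReductionFiniteDifferenceGeneralOld}. The guiding observation is that the cut matrix $\schememoments_{\indiceconserved}$ only couples the single conserved moment $\momentletter_{\indiceconserved}$ with the $\velocitynumber - \consmomentsnumber$ non-conserved moments, hence it governs an ``active'' block indexed by $\setoflines = \{\indiceconserved\} \cup \integerinterval{\consmomentsnumber + 1}{\velocitynumber}$ of size $\velocitynumber + 1 - \consmomentsnumber$ -- precisely the shape of a scheme with a \emph{single} conserved moment. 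Accordingly, I would split $\schememoments = \schememoments_{\indiceconserved} + \schememomentsother_{\indiceconserved}$ and rewrite the scheme \Cref{eq:SchemeAB} as a recurrence driven by $\schememoments_{\indiceconserved}$ with the combined forcing $\schememomentsother_{\indiceconserved} \vectorial{\momentletter} + \schemeequil \vectorial{\momentletter}^{\atequilibrium}$, which gathers the genuine equilibrium contribution together with the coupling to the remaining conserved moments $\momentletter_{\indicescolumns}$, $\indicescolumns \neq \indiceconserved$.

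\emph{Algebraic core.} Writing the scheme with the forward time shift $\timeshiftoperator$ as $(\timeshiftoperator \identity - \schememoments_{\indiceconserved}) \vectorial{\momentletter} = \schememomentsother_{\indiceconserved} \vectorial{\momentletter} + \schemeequil \vectorial{\momentletter}^{\atequilibrium}$ and using that $\setfinitedifferenceoperators$ -- and hence the ambient operator ring -- is commutative, I would left-multiply by $\adjugate(\timeshiftoperator \identity - \schememoments_{\indiceconserved})$ and invoke $\adjugate(\genericmatrix)\genericmatrix = \determinant(\genericmatrix) \identity$ to obtain $\charactpolynomial_{\schememoments_{\indiceconserved}}(\timeshiftoperator) \vectorial{\momentletter} = \adjugate(\timeshiftoperator \identity - \schememoments_{\indiceconserved})(\schememomentsother_{\indiceconserved}\vectorial{\momentletter} + \schemeequil \vectorial{\momentletter}^{\atequilibrium})$; this is exactly the Cayley--Hamilton mechanism underlying \Cref{prop:ReductionFiniteDifferenceGeneralOld}, now applied to the active block. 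The structural fact that makes the argument work is that, since the $\consmomentsnumber - 1$ rows and columns of $\schememoments_{\indiceconserved}$ indexed by the \emph{other} conserved moments vanish, the matrix $\timeshiftoperator \identity - \schememoments_{\indiceconserved}$ is block diagonal (after reordering) with a $\timeshiftoperator \identity$ block of size $\consmomentsnumber - 1$. This forces the factorization $\charactpolynomial_{\schememoments_{\indiceconserved}}(\polynomialunknown) = \polynomialunknown^{\consmomentsnumber - 1} \sum_{\indicepolynomials = 0}^{\velocitynumber + 1 - \consmomentsnumber} \coeffcharact_{\indiceconserved, \indicepolynomials} \polynomialunknown^{\indicepolynomials}$ announced in the statement and, crucially, a common factor $\timeshiftoperator^{\consmomentsnumber - 1}$ in the whole $\indiceconserved$-th row of $\adjugate(\timeshiftoperator \identity - \schememoments_{\indiceconserved})$.

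\emph{Main obstacle.} The delicate point -- and the only genuinely new ingredient with respect to the case $\consmomentsnumber = 1$ -- is to cancel this factor $\timeshiftoperator^{\consmomentsnumber - 1}$ from both sides of the $\indiceconserved$-th scalar equation, since it is exactly this cancellation that lowers the number of steps from $\velocitynumber$ to $\velocitynumber + 1 - \consmomentsnumber$. I would justify it by working in $\discretetimespaceoperators$, an integral domain in which $\timeshiftoperator$ is a nonzerodivisor, so that the cancelled identity still holds as an operator identity and thus when applied to any grid function over $\timelattice$. Making the block factorization of the adjugate rigorous (rather than merely formal) is the technical heart here; once done, extracting the $\indiceconserved$-th component -- and using that a power $\schememoments_{\indiceconserved}^{\ell}$ applied to any vector only involves the active block, because $\schememoments_{\indiceconserved}$ annihilates the other conserved directions -- lets one phrase everything directly in terms of the full cut matrix $\schememoments_{\indiceconserved}$, as in the statement.

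\emph{Conclusion.} It then remains to expand $\adjugate(\timeshiftoperator \identity - \schememoments_{\indiceconserved})$ in powers of $\timeshiftoperator$ through the standard identity expressing its coefficients as the partial sums $\sum_{\ell} \coeffcharact_{\indiceconserved, \cdot}\, \schememoments_{\indiceconserved}^{\ell}$ of the characteristic polynomial, and to translate time by $\timeshiftoperator^{-(\velocitynumber - \consmomentsnumber)}$ so that the leading term becomes $\momentletter_{\indiceconserved}(\timevariable + \timestep, \vectorial{\spacevariable})$. A change of the summation index $\indicetimeshift$ then produces the inner sums $\sum_{\ell = 0}^{\indicetimeshift} \coeffcharact_{\indiceconserved, \velocitynumber + 1 - \consmomentsnumber + \ell - \indicetimeshift}\, \schememoments_{\indiceconserved}^{\ell}$ of \Cref{eq:FDSchemeOldPaperNGeq1}, and splitting the combined forcing into $\schememomentsother_{\indiceconserved}\vectorial{\momentletter}$ and $\schemeequil \vectorial{\momentletter}^{\atequilibrium}$ yields its two source contributions. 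Specialising to $\consmomentsnumber = 1$ (where the spurious factor disappears) recovers \Cref{prop:ReductionFiniteDifferenceGeneralOld}, which I would use as a final consistency check.
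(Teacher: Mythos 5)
Your proposal is correct and follows essentially the same route as the paper (and the cited reference \cite{bellotti2021fd}): split $\schememoments = \schememoments_{\indiceconserved} + \schememomentsother_{\indiceconserved}$, apply the Cayley--Hamilton/adjugate mechanism of \Cref{prop:ReductionFiniteDifferenceGeneralOld} bis to the cut matrix $\schememoments_{\indiceconserved}$ over the commutative ring $\discretetimespaceoperators$, select the $\indiceconserved$-th row, and use the vanishing rows and columns of $\schememoments_{\indiceconserved}$ to factor out and cancel $\timeshiftoperator^{\consmomentsnumber - 1}$, which is exactly what the tailored characteristic polynomial $\charactpolynomial_{\schememoments_{\indiceconserved}} = \polynomialunknown^{\consmomentsnumber - 1}\sum_{\indicepolynomials}\coeffcharact_{\indiceconserved,\indicepolynomials}\polynomialunknown^{\indicepolynomials}$ in the statement encodes. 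Your explicit justification of the cancellation of $\timeshiftoperator^{\consmomentsnumber-1}$ (nonzerodivisor in an integral domain, up to the temporal shift the proposition already allows) is a detail the paper leaves implicit, and it is handled correctly.
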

  This result is the natural generalization of \Cref{prop:ReductionFiniteDifferenceGeneralOld} to the case $\consmomentsnumber > 1$, in the sense that each sub-problem for any $\indiceconserved \in \integerinterval{1}{\consmomentsnumber}$ deals with one conserved moment (the $\indiceconserved$-th) at each time, only trying to eliminate the non-conserved moments while keeping the conserved ones other than the $\indiceconserved$-th.
  This is achieved by using a tailored characteristic polynomial for each conserved moment in the problem. 

  \begin{figure}[h]
    \begin{center}
        \includegraphics[width = 0.7\textwidth]{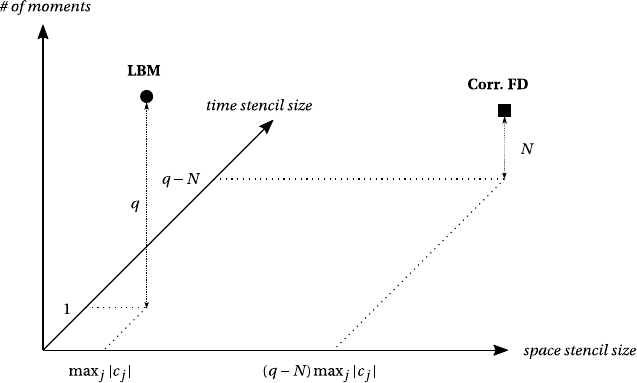}
    \end{center}\caption{\label{fig:TimeSpaceTransformation}Comparison between \lbm scheme (circle) and corresponding \fd schemes (square) in terms of involved moments (respectively $\velocitynumber$ and $\consmomentsnumber$), number of time steps (respectively $1$ and $\velocitynumber - \consmomentsnumber$) and size of the maximal spatial stencil (respectively $\max_{\populationindex}|\normalizedvelocityletter_{\populationindex}|$ and $(\velocitynumber - \consmomentsnumber)\max_{\populationindex}|\normalizedvelocityletter_{\populationindex}|$).}
  \end{figure}

  Observe that each scheme in \eqref{eq:FDSchemeOldPaperNGeq1} is \emph{a priori} a $(\velocitynumber - \consmomentsnumber)$-steps scheme (thus with $\velocitynumber - \consmomentsnumber + 1$ stages), see \Cref{fig:TimeSpaceTransformation}. When some non-conserved moment relaxes to its equilibrium, the schemes in \eqref{eq:FDSchemeOldPaperNGeq1} involve less time steps, see \cite{bellotti2021fd} for more details. However, this question is marginal in the present work since the results we will demonstrate hold whatever the number of steps in \eqref{eq:FDSchemeOldPaperNGeq1} and thus for any value of the relaxation parameters $\relaxparletter_{\consmomentsnumber +1}, \dots, \relaxparletter_{\velocitynumber}$ for the non-conserved moments.
  Further comments on \Cref{prop:ReductionFiniteDifferenceGeneralOld} and \Cref{prop:ReductionFiniteDifferenceNGeq1Old} are postponed to the following Section.

  \subsection{A more compact form of corresponding \fd schemes}

  Although the asymptotic analysis we shall develop in \Cref{sec:DetailedProofs} can be carried on the formulations from \Cref{prop:ReductionFiniteDifferenceGeneralOld} and \Cref{prop:ReductionFiniteDifferenceNGeq1Old} previously introduced in \cite{bellotti2021fd}, we propose a different formalism based on shift operators in time.
  Having utilized both approaches, the advantage of this new standpoint -- which shall be adopted in this paper -- is to easily deal with the asymptotic analysis of the coefficients of the characteristic polynomial and of the powers of the matrix $\schememoments$ on the right hand side of \eqref{eq:FDSchemeOldPaper} or \eqref{eq:FDSchemeOldPaperNGeq1}.
  In particular, this allows for the straightforward generalization of the procedure above second-order.
  Furthermore, the links with other asymptotic analysis of \lbm schemes from the literature -- which we shall develop in \Cref{sec:LinkExistingApproaches} -- become noticeably more transparent.
  To this end, we introduce the following Definition.
  \begin{definition}[Shift operator in time]
      Let $\genericfunction : \timelattice \to \reals$ be any function defined on the time lattice, then the time shift operator $\timeshiftoperator$ acts as
      \begin{equation*}
          (\timeshiftoperator \genericfunction) (\timevariable) = \genericfunction (\timevariable + \timestep), \qquad \forall \timevariable \in \timelattice.
      \end{equation*}
  \end{definition}
  With this, the scheme \eqref{eq:SchemeAB} can be recast under the fully-operatorial form: for any $\timevariable \in \timelattice$ and for any $ \vectorial{\spacevariable} \in \lattice$
  \begin{equation}\label{eq:SchemeFullyOperator}
      (\timeshiftoperator \matricial{\identity} - \schememoments) \vectorial{\momentletter}(\timevariable, \vectorial{\spacevariable}) = \schemeequil \vectorial{\momentletter}^{\atequilibrium}(\timevariable, \vectorial{\spacevariable}),
  \end{equation}
  which corresponds to taking the $Z$-transform \cite{jury1964theory} of the scheme in the variable $\timeshiftoperator$. Here, the inverse of the resolvent associated with $\schememoments$, namely $\timeshiftoperator \matricial{\identity} - \schememoments \in \matrixspace{\velocitynumber}{\discretetimespaceoperators}$, where $\discretetimespaceoperators \isomorphic \polynomialring{\reals}{\timeshiftoperator, \basicshift_{1}, \basicshift_{1}^{-1}, \dots, \basicshift_{\spatialdimensionality}, \basicshift_{\spatialdimensionality}^{-1}}$, with $\tensorProductRing{\reals}$ indicating the tensor product of $\reals$-algebras (see Chapter 16 in \cite{lang02algebra} or Chapter 2 in \cite{kassel95quantum}), forms a commutative ring.
  In the sequel, we shall drop the time and the space variables when not strictly needed for the sake of readability, because the system given by \eqref{eq:SchemeFullyOperator} is intrinsically time and space invariant thanks to \Cref{ass:FDAssumptionsDependencies} and since we work on an unbounded domain, without considering the initial conditions.

  \theoremstyle{plain} 
  \newtheorem*{T1}{\Cref{prop:ReductionFiniteDifferenceGeneralOld} bis}
  \newtheorem*{T2}{\Cref{prop:ReductionFiniteDifferenceNGeq1Old} bis}

  \begin{T1}[Corresponding \fd scheme for $\consmomentsnumber = 1$]\label{prop:ReductionFiniteDifferenceGeneralAdjoint}
    Consider $\consmomentsnumber = 1$. Then the \lbm scheme given by \eqref{eq:SchemeAB} or \eqref{eq:SchemeFullyOperator} corresponds to a multi-step explicit macroscopic \fd scheme on the conserved moment $\momentletter_1$ under the form
    \begin{equation}\label{eq:FDSchemeAdjoint}
      \determinant (\timeshiftoperator \matricial{\identity} - \schememoments) \momentletter_{1} = \left ( \adjugate (\timeshiftoperator \matricial{\identity} - \schememoments) \schemeequil \vectorial{\momentletter}^{\atequilibrium} \right )_1,
    \end{equation}
    where $\adjugate(\cdot)$ indicates the adjugate matrix,\footnote{It is worthwhile observing that the determinant and the adjugate matrix are defined for any square matrix with elements in a commutative ring.} also known as classical adjoint, which is the transpose of the cofactor matrix \cite{horn2012matrix}.

    Up to a temporal shift of the whole scheme, the corresponding multi-step explicit \fd scheme by \eqref{eq:FDSchemeAdjoint} equals the one from \eqref{eq:FDSchemeOldPaper}.
  \end{T1}
  \begin{proof}
    The proof can be done starting from \Cref{prop:ReductionFiniteDifferenceGeneralOld}. Alternatively, using the fundamental relation between adjugate and determinant, see Chapter 0 in \cite{horn2012matrix}, which is a consequence of the Laplace formula, we have that for any $\genericmatrix \in \matrixspace{\velocitynumber}{\genericcommutativering}$ where $\genericcommutativering$ is any commutative ring
    \begin{equation}\label{eq:FundamentalRelationAdjugate}
      \genericmatrix \adjugate(\genericmatrix) = \adjugate(\genericmatrix) \genericmatrix = \determinant(\genericmatrix) \matricial{\identity}.
    \end{equation}
    Hence, multiplying \eqref{eq:SchemeFullyOperator} by $\adjugate (\timeshiftoperator \matricial{\identity} - \schememoments)$ yields $\determinant (\timeshiftoperator \matricial{\identity} - \schememoments) \vectorial{\momentletter} =  \adjugate (\timeshiftoperator \matricial{\identity} - \schememoments) \schemeequil \vectorial{\momentletter}^{\atequilibrium}$. Selecting the first row gives \eqref{eq:FDSchemeAdjoint}.
  \end{proof}

  \begin{remark}[From kinetic to macroscopic]\label{rem:FromKineticToMacroscopic}
    We observe the following facts:
    \begin{itemize}
      \item The procedure can be reversed -- when keeping all the lines in $\determinant (\timeshiftoperator \matricial{\identity} - \schememoments) \vectorial{\momentletter} =  \adjugate (\timeshiftoperator \matricial{\identity} - \schememoments) \schemeequil \vectorial{\momentletter}^{\atequilibrium}$ -- using a multiplication by $\timeshiftoperator \matricial{\identity} - \schememoments$ and then dividing by the polynomial $\determinant (\timeshiftoperator \matricial{\identity} - \schememoments)$. In this way, one comes back to the \lbm scheme by \eqref{eq:SchemeFullyOperator}. This can be done as long as one does not select and store only the first row as in \eqref{eq:FDSchemeAdjoint}.
      Contrarily, if this selection is performed, the irreversible passage from the kinetic to the macroscopic formulation is accomplished. The non-conserved moments $\momentletter_2, \dots, \momentletter_{\velocitynumber}$ are no longer defined and they cannot be recovered from \eqref{eq:FDSchemeAdjoint}.
      This fact has been observed by \cite{dellacherie2014construction}: the same macroscopic \fd scheme can correspond to distinct \lbm schemes which can have different evolution equations for the non-conserved moments $\momentletter_2, \dots, \momentletter_{\velocitynumber}$.
      This is not surprising, since for a given monic polynomial, one can find an infinite number of matrices of which it is the characteristic polynomial.
      \item Though -- as previously emphasized -- the non-conserved moments are no longer present in the macroscopic \fd scheme by \eqref{eq:FDSchemeAdjoint}, there is a residual shadow of their presence, namely the multi-step nature of the \fd scheme, see \Cref{fig:TimeSpaceTransformation}. In particular, each non-conserved moment $\momentletter_{\indicemoments}$ relaxing away from the equilibrium, namely with $\relaxparletter_{\indicemoments} \neq 1$, for $\indicemoments \in \integerinterval{2}{\velocitynumber}$, adds a time step to the corresponding \fd scheme solely acting on the conserved moment $\momentletter_1$.
    \end{itemize}
    
  \end{remark}
  \begin{remark}[Adjugate and characteristic polynomial]
    A time shift and a change of variable in \eqref{eq:FDSchemeOldPaper} allows to express $\adjugate (\timeshiftoperator \matricial{\identity} - \schememoments)$ as a polynomial in $\timeshiftoperator$ of degree $\velocitynumber - 1$ computed from the characteristic polynomial. This relation is indeed classical and reads
    \begin{equation*}
      \adjugate (\timeshiftoperator \matricial{\identity} - \schememoments) = \sum_{\indicepolynomials = 0}^{\velocitynumber - 1} \left (\sum_{\ell = 0}^{\velocitynumber - 1 - \indicepolynomials} \coeffcharact_{\indicepolynomials + \ell + 1} \schememoments^{\ell} \right ) \timeshiftoperator^{\indicepolynomials}, \qquad \text{where} \qquad \determinant (\timeshiftoperator \matricial{\identity} - \schememoments) = \sum_{\indicepolynomials = 0}^{\velocitynumber} \coeffcharact_{\indicepolynomials} \timeshiftoperator^{\indicepolynomials}.
    \end{equation*}
  \end{remark}

  In the same way, we can restate \Cref{prop:ReductionFiniteDifferenceNGeq1Old} using the new formalism.
  \begin{T2}[Corresponding \fd schemes for $\consmomentsnumber \geq 1$]\label{prop:ReductionFiniteDifferenceNGeq1}
    Consider $\consmomentsnumber \geq 1$. Then the \lbm scheme given by \eqref{eq:SchemeAB}  or \eqref{eq:SchemeFullyOperator} corresponds to a family of multi-step explicit macroscopic \fd schemes on the conserved moments $\momentletter_{1}, \dots, \momentletter_{\consmomentsnumber}$.
    This is, for any $\indiceconserved \in \integerinterval{1}{\consmomentsnumber}$
    \begin{equation}\label{eq:FDSchemeAdjointGeneral}
      \determinant (\timeshiftoperator \matricial{\identity} - \schememoments_{\indiceconserved}) \momentletter_{\indiceconserved} =  \left ( \adjugate (\timeshiftoperator \matricial{\identity} - \schememoments_{\indiceconserved}) \schememomentsother_{\indiceconserved} \vectorial{\momentletter} \right )_{\indiceconserved} +  \left ( \adjugate (\timeshiftoperator \matricial{\identity} - \schememoments_{\indiceconserved}) \schemeequil \vectorial{\momentletter}^{\atequilibrium} \right )_{\indiceconserved}.
    \end{equation}

    Up to a temporal shift of the whole scheme, the corresponding multi-step explicit \fd scheme by \eqref{eq:FDSchemeAdjointGeneral} equals the one from \eqref{eq:FDSchemeOldPaperNGeq1}.

  \end{T2}

  We could call the form of \fd scheme from \Cref{prop:ReductionFiniteDifferenceNGeq1Old}  and \Cref{prop:ReductionFiniteDifferenceNGeq1Old} bis ``canonical'' since we shall prove in \Cref{sec:InvarianceChoiceRelaxationParamtersConserved} that it guarantees that the \fd schemes do not depend on the choice of relaxation parameters for the conserved variables, which do not play any role in the original \lbm scheme either, as previously discussed.

  \begin{remark}[Lack of scaling assumption]\label{rem:LackOfScaling}
    The results in \Cref{prop:ReductionFiniteDifferenceGeneralOld}, \Cref{prop:ReductionFiniteDifferenceNGeq1Old}, \Cref{prop:ReductionFiniteDifferenceGeneralOld} bis, \Cref{prop:ReductionFiniteDifferenceNGeq1Old} bis are fully discrete and do not make any assumption on the particular scaling between the time-step $\timestep$ and the space-step $\spacestep$.
  \end{remark}
  The previous \Cref{rem:LackOfScaling} signifies that the corresponding \fd schemes can be utilized to assess the consistency of the underlying \lbm scheme with respect to the macroscopic equations for any particular scaling between time and space discretizations, as we will showcase in \Cref{sec:MainResults}.

  \subsection{On the choice of relaxation parameters for the conserved moments}\label{sec:InvarianceChoiceRelaxationParamtersConserved}

  In \Cref{sec:LBMSchemes}, we have observed that the choice of relaxation parameters for the conserved moments, namely $\relaxparletter_{1}, \dots, \relaxparletter_{\consmomentsnumber}$, does not change the  \lbm scheme \eqref{eq:CollisionPhase}.
  However, it could be argued that different choices for $\relaxparletter_{1}, \dots, \relaxparletter_{\consmomentsnumber}$ can affect the formulations of the corresponding \fd schemes resulting from \Cref{prop:ReductionFiniteDifferenceGeneralOld} bis and \Cref{prop:ReductionFiniteDifferenceNGeq1Old} bis.
  We now show that, as one could hope, this is not the case for the \fd schemes given by \Cref{prop:ReductionFiniteDifferenceNGeq1Old} bis.

  \begin{proposition}
    The multi-step explicit macroscopic \fd schemes given by \eqref{eq:FDSchemeAdjointGeneral} in \Cref{prop:ReductionFiniteDifferenceNGeq1Old} bis do not depend on the choice of $\relaxparletter_{1}, \dots, \relaxparletter_{\consmomentsnumber}$, the relaxation parameters of the conserved moments.
  \end{proposition}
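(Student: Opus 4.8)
The plan is to show that, although the three matrices $\schememoments_{\indiceconserved}$, $\schememomentsother_{\indiceconserved}$ and $\schemeequil$ entering \eqref{eq:FDSchemeAdjointGeneral} each depend individually on the conserved relaxation parameters $\relaxparletter_{1},\dots,\relaxparletter_{\consmomentsnumber}$, the particular combination forming the $\indiceconserved$-th sub-scheme does not. Two distinct cancellations are at work, both ultimately powered by the equilibrium constraint \eqref{eq:EquilibriumConservedMoments}, namely $\momentletter_{k}^{\atequilibrium}=\momentletter_{k}$ for $k\in\integerinterval{1}{\consmomentsnumber}$. First I would identify which parameters can possibly survive in the $\indiceconserved$-th scheme. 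Since $\schememoments=\streammoments(\matricial{\identity}-\relaxationmatrix)$ with $\relaxationmatrix$ diagonal, the $k$-th column of $\schememoments$ is $(1-\relaxparletter_{k})\streammoments\canonicalbasisvector_{k}$ and the $k$-th column of $\schemeequil=\streammoments\relaxationmatrix$ is $\relaxparletter_{k}\streammoments\canonicalbasisvector_{k}$. Writing $\setoflines_{\indiceconserved}\definitionequality\{\indiceconserved\}\cup\integerinterval{\consmomentsnumber+1}{\velocitynumber}$ and letting $\matricial{P}_{\indiceconserved}$ be the diagonal projector onto the indices of $\setoflines_{\indiceconserved}$, one has $\schememoments_{\indiceconserved}=\matricial{P}_{\indiceconserved}\schememoments\matricial{P}_{\indiceconserved}$; because $\matricial{P}_{\indiceconserved}$ retains only the columns in $\setoflines_{\indiceconserved}$, every conserved column $k\le\consmomentsnumber$ with $k\neq\indiceconserved$ is annihilated, so the only conserved relaxation parameter surviving in $\schememoments_{\indiceconserved}$ is $\relaxparletter_{\indiceconserved}$. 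On its block $\schememoments_{\indiceconserved}$ therefore has exactly the structure of a one-conserved-moment scheme with relaxation $\relaxparletter_{\indiceconserved}$, and in particular $\determinant(\timeshiftoperator\matricial{\identity}-\schememoments_{\indiceconserved})$ and $\adjugate(\timeshiftoperator\matricial{\identity}-\schememoments_{\indiceconserved})$ are independent of $\relaxparletter_{j}$ for every $j\in\integerinterval{1}{\consmomentsnumber}\setminus\{\indiceconserved\}$.

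Next I would treat the source term. Projecting the right-hand side onto $\setoflines_{\indiceconserved}$ and using that the columns of $\schememomentsother_{\indiceconserved}$ indexed in $\setoflines_{\indiceconserved}$ live entirely outside the rows of $\setoflines_{\indiceconserved}$ (hence are killed by $\matricial{P}_{\indiceconserved}$), I obtain
\[
\matricial{P}_{\indiceconserved}\bigl(\schememomentsother_{\indiceconserved}\vectorial{\momentletter}+\schemeequil\vectorial{\momentletter}^{\atequilibrium}\bigr)=\sum_{\substack{k\le\consmomentsnumber\\ k\neq\indiceconserved}}(1-\relaxparletter_{k})\,\matricial{P}_{\indiceconserved}\streammoments\canonicalbasisvector_{k}\,\momentletter_{k}+\sum_{k=1}^{\velocitynumber}\relaxparletter_{k}\,\matricial{P}_{\indiceconserved}\streammoments\canonicalbasisvector_{k}\,\momentletter_{k}^{\atequilibrium}.
\]
Substituting $\momentletter_{k}^{\atequilibrium}=\momentletter_{k}$ for $k\le\consmomentsnumber$, the two contributions coming from each $k\le\consmomentsnumber$ with $k\neq\indiceconserved$ combine into $\bigl((1-\relaxparletter_{k})+\relaxparletter_{k}\bigr)\matricial{P}_{\indiceconserved}\streammoments\canonicalbasisvector_{k}\,\momentletter_{k}=\matricial{P}_{\indiceconserved}\streammoments\canonicalbasisvector_{k}\,\momentletter_{k}$, so every $\relaxparletter_{k}$ with $k\neq\indiceconserved$ disappears. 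What remains is a part independent of all of $\relaxparletter_{1},\dots,\relaxparletter_{\consmomentsnumber}$ (the surviving $k\le\consmomentsnumber,\,k\neq\indiceconserved$ pieces together with the genuinely non-conserved $k>\consmomentsnumber$ ones, which involve only $\relaxparletter_{\consmomentsnumber+1},\dots,\relaxparletter_{\velocitynumber}$), plus the single residual term $\relaxparletter_{\indiceconserved}\,\matricial{P}_{\indiceconserved}\streammoments\canonicalbasisvector_{\indiceconserved}\,\momentletter_{\indiceconserved}$. Combined with the previous paragraph, this already shows that the entire $\indiceconserved$-th scheme depends on the conserved parameters only through $\relaxparletter_{\indiceconserved}$.

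The final and central step is to remove this residual $\relaxparletter_{\indiceconserved}$-dependence. Using the block-diagonal structure and the cofactor (adjugate) form of Cramer's rule, the $\indiceconserved$-th row of \eqref{eq:FDSchemeAdjointGeneral} reduces, up to the common $\relaxparletter$-independent prefactor $\timeshiftoperator^{\consmomentsnumber-1}$, to the scalar identity $\determinant(\matricial{B})\,\momentletter_{\indiceconserved}=\determinant(\matricial{B}')$, where $\matricial{B}\definitionequality\timeshiftoperator\matricial{\identity}-\schememoments_{\indiceconserved}$ read on the block $\setoflines_{\indiceconserved}$ and $\matricial{B}'$ is $\matricial{B}$ with its $\indiceconserved$-th column replaced by the projected source computed above. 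The $\indiceconserved$-th column of $\matricial{B}$ is affine in $\relaxparletter_{\indiceconserved}$ with $\relaxparletter_{\indiceconserved}$-coefficient $\matricial{P}_{\indiceconserved}\streammoments\canonicalbasisvector_{\indiceconserved}$, while the replaced column of $\matricial{B}'$ is affine in $\relaxparletter_{\indiceconserved}$ with $\relaxparletter_{\indiceconserved}$-coefficient $\matricial{P}_{\indiceconserved}\streammoments\canonicalbasisvector_{\indiceconserved}\,\momentletter_{\indiceconserved}$ — the very same vector scaled by $\momentletter_{\indiceconserved}$, precisely because the constraint forced the residual source to be $\relaxparletter_{\indiceconserved}\,\matricial{P}_{\indiceconserved}\streammoments\canonicalbasisvector_{\indiceconserved}\,\momentletter_{\indiceconserved}$. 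By multilinearity of the determinant in its columns (valid over the commutative ring $\discretetimespaceoperators$), expanding both sides in $\relaxparletter_{\indiceconserved}$ produces on each side the identical term $\relaxparletter_{\indiceconserved}\,\momentletter_{\indiceconserved}\,\determinant(\matricial{B}_0')$, where $\matricial{B}_0'$ carries $\matricial{P}_{\indiceconserved}\streammoments\canonicalbasisvector_{\indiceconserved}$ in the $\indiceconserved$-th column and the remaining ($\relaxparletter_{\indiceconserved}$-free) columns of $\matricial{B}$ unchanged. These terms cancel, leaving an equation in which $\relaxparletter_{\indiceconserved}$ no longer appears.

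I expect the main obstacle to be careful bookkeeping of the various column dependencies rather than any deep difficulty: one must verify that (i) $\matricial{P}_{\indiceconserved}$ annihilates every conserved column of $\schememoments_{\indiceconserved}$ except the $\indiceconserved$-th and every non-conserved column of $\schememomentsother_{\indiceconserved}$, and (ii) the $\relaxparletter_{\indiceconserved}$-coefficients of the two determinants coincide, which hinges entirely on the proportionality between the $\relaxparletter_{\indiceconserved}$-part of the source and the $\relaxparletter_{\indiceconserved}$-part of the $\indiceconserved$-th column of $\matricial{B}$ — itself a direct consequence of the constraint \eqref{eq:EquilibriumConservedMoments}. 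Since the same reasoning applies verbatim to each $\indiceconserved\in\integerinterval{1}{\consmomentsnumber}$, the whole family of schemes is independent of $\relaxparletter_{1},\dots,\relaxparletter_{\consmomentsnumber}$.
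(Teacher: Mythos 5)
Your proof is correct and takes essentially the same route as the paper's: the same two cancellations, both driven by \Cref{eq:EquilibriumConservedMoments} (the $(1-\relaxparletter_{\indicescolumns})+\relaxparletter_{\indicescolumns}=1$ collapse for the columns $\indicescolumns\neq\indiceconserved$, then the absorption of the residual $\relaxparletter_{\indiceconserved}$-term into the determinant). Your final step via column multilinearity and Cramer's rule is exactly the paper's rank-one-update identity $\determinant(\genericmatrix+\vectorial{u}\otimes\vectorial{v})=\determinant(\genericmatrix)+\transpose{\vectorial{v}}\adjugate(\genericmatrix)\vectorial{u}$ (\Cref{lemma:MatrixDeterminant}) written out explicitly, so the two arguments coincide up to packaging.
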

  \begin{proof}
    Fix the indices of the conserved moment $\indiceconserved \in \integerinterval{1}{\consmomentsnumber}$. Let us decompose $\schemeequil$, the part of the \lbm scheme dealing with the equilibria, as follows: $\schemeequil = \vectorequilibrium_{\indiceconserved} \otimes \canonicalbasisvector_{\indiceconserved} + \schemeequil |_{\relaxparletter_{\indiceconserved = 0}}$ where $ \vectorequilibrium_{\indiceconserved}  = \schemeequil_{\cdot, \indiceconserved}$ is the $\indiceconserved$-th column of $\schemeequil$.
On the one hand, the dependency of $\schemeequil$ on the choice of $\relaxparletter_{\indiceconserved}$ is now fully contained in $\vectorequilibrium_{\indiceconserved} $. 
On the other hand $\schemeequil |_{\relaxparletter_{\indiceconserved = 0}}$ does not depend on it.
The \fd scheme from \Cref{prop:ReductionFiniteDifferenceNGeq1Old} bis can be therefore recast, upon rearranging and using well-known properties of the external product $\otimes$, as
\begin{equation}\label{eq:SchemeIndependence}
  \left ( \determinant (\timeshiftoperator \matricial{\identity} - \schememoments_{\indiceconserved})  -\transpose{\canonicalbasisvector_{\indiceconserved}} \adjugate (\timeshiftoperator \matricial{\identity} - \schememoments_{\indiceconserved}) \vectorequilibrium_{\indiceconserved}  \right )\momentletter_{\indiceconserved} = \left ( \adjugate (\timeshiftoperator \matricial{\identity} - \schememoments_{\indiceconserved}) \schememomentsother_{\indiceconserved} \vectorial{\momentletter} \right )_{\indiceconserved} +  \left ( \adjugate (\timeshiftoperator \matricial{\identity} - \schememoments_{\indiceconserved}) \schemeequil|_{\relaxparletter_{\indiceconserved} = 0} \vectorial{\momentletter}^{\atequilibrium} \right )_{\indiceconserved}.
\end{equation}
The left hand side does not depend on $\relaxparletter_{\indicescolumns}$ for $\indicescolumns \in \integerinterval{1}{\consmomentsnumber} \smallsetminus \{ \indiceconserved \}$ by construction of $\schememoments_{\indiceconserved}$ and $\vectorequilibrium_{\indiceconserved}$. On the other hand, the right hand side does not depend on $\relaxparletter_{\indicescolumns}$ for $\indicescolumns \in \integerinterval{1}{\consmomentsnumber} \smallsetminus \{ \indiceconserved \}$, because $(\schememomentsother_{\indiceconserved})_{\cdot, \indicescolumns} + (\schemeequil |_{\relaxparletter_{\indiceconserved} = 0})_{\cdot, \indicescolumns} = (\schememomentsother_{\indiceconserved}|_{\relaxparletter_{\indicescolumns} = 0})_{\cdot, \indicescolumns}$, where we have used \eqref{eq:CollisionPhase} and \eqref{eq:EquilibriumConservedMoments}.
We are left to discuss the possible dependency of \eqref{eq:SchemeIndependence} on $\relaxparletter_{\indiceconserved}$.
For the left hand side, we need the following result concerning the determinant of matrices under rank-one updates, whose proof is analogous to that in \cite{ding2007eigenvalues}.
\begin{lemma}[Matrix determinant]\label{lemma:MatrixDeterminant}
  Let $\genericcommutativering$ be a commutative ring, $\genericmatrix \in \matrixspace{\velocitynumber}{\genericcommutativering}$ and $\vectorial{u}, \vectorial{v} \in \genericcommutativering^{\velocitynumber}$, then $\determinant(\genericmatrix + \vectorial{u} \otimes \vectorial{v}) = \determinant(\genericmatrix) + \transpose{\vectorial{v}}\adjugate(\genericmatrix) \vectorial{u}$.
\end{lemma}
By this Lemma, we deduce that \eqref{eq:SchemeIndependence} now reads
\begin{equation}\label{eq:SchemeIndependence2}
  \determinant (\timeshiftoperator \matricial{\identity} - \left ( \schememoments_{\indiceconserved} +   \vectorequilibrium_{\indiceconserved} \otimes \canonicalbasisvector_{\indiceconserved}\right ))   \momentletter_{\indiceconserved} = \left ( \adjugate (\timeshiftoperator \matricial{\identity} - \schememoments_{\indiceconserved}) \schememomentsother_{\indiceconserved} \vectorial{\momentletter} \right )_{\indiceconserved} +  \left ( \adjugate (\timeshiftoperator \matricial{\identity} - \schememoments_{\indiceconserved}) \schemeequil|_{\relaxparletter_{\indiceconserved} = 0} \vectorial{\momentletter}^{\atequilibrium} \right )_{\indiceconserved}.
\end{equation}
Observe that $\schememoments_{\indiceconserved} + \vectorequilibrium_{\indiceconserved} \otimes \canonicalbasisvector_{\indiceconserved} = \schememoments_{\indiceconserved}|_{\relaxparletter_{\indiceconserved} = 0}$, thus the left hand side of \eqref{eq:SchemeIndependence2} does not not depend on $\relaxparletter_{\indiceconserved}$.
The right hand side of \eqref{eq:SchemeIndependence2} is independent of $\relaxparletter_{\indiceconserved}$ because $\schememomentsother_{\indiceconserved}$ does not depend on it and since the $\indiceconserved$-th row of $ \adjugate (\timeshiftoperator \matricial{\identity} - \schememoments_{\indiceconserved})$ -- the transpose of the cofactor matrix of $\timeshiftoperator \matricial{\identity} - \schememoments_{\indiceconserved}$ -- cannot depend on $\relaxparletter_{\indiceconserved}$, because only the $\indiceconserved$-th column of $\timeshiftoperator \matricial{\identity} - \schememoments_{\indiceconserved}$ depends on $\relaxparletter_{\indiceconserved}$.
This concludes the proof.
  \end{proof}

  We have thus shown that the \fd schemes from \Cref{prop:ReductionFiniteDifferenceNGeq1Old} bis are the same regardless of the choice of relaxation parameters for the conserved moments and so that we are allowed to take them equal to zero or any other value of specific convenience without loss of generality.
  In particular, the choice of taking $\relaxparletter_{\indicemoments} = 0$ for $\indicemoments \in \integerinterval{1}{\consmomentsnumber}$ offers interesting simplifications in the computations to come in \Cref{sec:DetailedProofs}, in a way that shall be clearer by looking at the details.
  Moreover, this choice has the advantage of showing which moments are conserved at a glance.

\section{Main results}\label{sec:MainResults}
Everything is in place to start the standard consistency analysis \cite{strikwerda2004finite, allaire2007numerical} and computation of the modified equations \cite{warming1974modified, carpentier1997derivation} of \fd schemes.
We stress the fact that we aim at studying these features for \eqref{eq:FDSchemeAdjoint} and \eqref{eq:FDSchemeAdjointGeneral} without explicitly writing these schemes down.
We start from the assumptions allowing us to identify each term once developing in formal power series of $\spacestep$, \emph{i.e.} performing Taylor expansions.
Observe that for any time-space numerical scheme at hand, the time step $\timestep$ and the space step $\spacestep$ are linked (scaling) when the grids are refined. Therefore, we decide to take $\spacestep$ as discretization parameter tending to zero. Specific bonds between these two parameters will be given in the following pages.
\begin{assumptions}[General assumptions]\label{ass:ScalingAssumptions} 
    Assume that the change of basis $\momentsmatrix$ and the relaxation matrix $\relaxationmatrix$ are fixed as $\spacestep \to 0$.
\end{assumptions}
We also introduce the spaces of differential operators which shall be obtained by taking the limit $\spacestep \to 0$ as well as other tightly associated concepts.
\begin{definition}[Time-space differential operators]\label{def:TimeSpaceDifferentialOperators}
  We define.
  \begin{itemize}
    \item The commutative ring of time-space differential operators $\timespacedifferentialoperators \definitionequality \polynomialring{\reals}{\partial_{\timevariable}} \tensorProductRing{\reals} \polynomialring{\reals}{\partial_{\spacevariable_1}, \dots, \partial_{\spacevariable_{\spatialdimensionality}}} \isomorphic \polynomialring{\reals}{\partial_{\timevariable}, \partial_{\spacevariable_1}, \dots, \partial_{\spacevariable_{\spatialdimensionality}}}$.
    \item We consider the commutative ring of formal power series \cite{niven1969formal, monforte2013formal} $\taylorseries \definitionequality \formalpolynomialring{\timespacedifferentialoperators}{\spacestep}$.
    \item For any $\generictaylorseries = \sum_{\perturbationorderindices = 0}^{\perturbationorderindices = +\infty} \spacestep^{\perturbationorderindices} \termatorder{\generictaylorseries}{\perturbationorderindices}  \in \taylorseries $, we indicate $\generictaylorseries = \bigO{\spacestep^{{\perturbationorderindices_{\circ}}}}$ for some ${\perturbationorderindices_{\circ}} \in \naturals$ if $\termatorder{\generictaylorseries}{\perturbationorderindices} = 0$ for $\perturbationorderindices \in \integerinterval{0}{\perturbationorderindices_{\circ} - 1}$ and $\termatorder{\generictaylorseries}{\perturbationorderindices_{\circ}} \neq 0$.
    The integer $\perturbationorderindices_{\circ}$ is called ``order'' of the formal power series $\generictaylorseries$, see Chapter 1 in \cite{roman2005umbral}.
    \item Finally, let $\generictimespacediscreteop \in \discretetimespaceoperators$ and $\generictaylorseries \in \taylorseries$, then we indicate $\generictimespacediscreteop \asymtoticequivalence \generictaylorseries$, called ``asymptotic equivalence'' of $\generictimespacediscreteop$ and $\generictaylorseries$, if for any smooth function of the time and space variables $\genericfunction : \reals \times \reals^{\spatialdimensionality} \to \reals$, we have
    \begin{equation*}
      (\generictimespacediscreteop \genericfunction) (\timevariable, \vectorial{\spacevariable}) = \sum_{\perturbationorderindices = 0}^{+\infty} \spacestep^{\perturbationorderindices} (\termatorder{\generictaylorseries}{\perturbationorderindices} \genericfunction) (\timevariable, \vectorial{\spacevariable}), \qquad \forall (\timevariable, \vectorial{\spacevariable})  \in \reals \times \reals^{\spatialdimensionality}, \qquad \text{as} \qquad \spacestep \to 0.
    \end{equation*}
  \end{itemize}
\end{definition}
The previous $\bigO{\cdot}$ notation and the notion of asymptotic equivalence are effortlessly extended to vectors and matrices in an entry-wise fashion. 
It shall be common and harmless not to distinguish between $\matrixspace{\velocitynumber}{\taylorseries}$ and $\formalpolynomialring{(\matrixspace{\velocitynumber}{\timespacedifferentialoperators})}{\spacestep}$.

The momentum-velocity operator matrix $\duboisoperatormatrix \in \matrixspace{\velocitynumber}{\timespacedifferentialoperators}$, introduced by \cite{dubois2019nonlinear} with slightly different notations, is defined as follows.
It is indeed closely linked to the moment-stream matrix $\streammoments \in \matrixspace{\velocitynumber}{\setfinitedifferenceoperators}$ that we have previously introduced.
\begin{definition}[Momentum-velocity operator matrix]
    The momentum-velocity operator matrix made up of first-order differential operators in space is given by
    \begin{equation*}
        \duboisoperatormatrix \definitionequality 
        \momentsmatrix \left ( \sum_{\multiindicemodule{\multiindice} = 1} \diagmatrix \left (\vectorial{\vectorial{\normalizedvelocityletter}_1}^{\multiindice}, \dots, \vectorial{\vectorial{\normalizedvelocityletter}_{\velocitynumber}}^{\multiindice}\right ) \multiindicederivative{\multiindice} \right ) \momentsmatrix^{-1} \in \matrixspace{\velocitynumber}{\timespacedifferentialoperators}, 
    \end{equation*}
    where the multi-index notation is employed.
\end{definition}
This momentum-velocity operator matrix can be partitioned in four blocks with different meanings according to the different nature (conserved or not) of the corresponding moments, as for Equation (8) in \cite{dubois2019nonlinear}.

As previously announced, one needs to specify the used scaling between $\timestep$ and $\spacestep$ in order to perform the consistency analysis and also to recover the modified equations. We start by the acoustic scaling, see for example \cite{dubois2008equivalent, dubois2019nonlinear, yong2016theory}, where $\timestep \sim \spacestep$.
\begin{assumptions}[Acoustic scaling]\label{ass:Acoustic} The assumptions when considering schemes with the acoustic scaling are:
    \begin{enumerate}
        \item $\latticevelocity > 0$ is a fixed real number as $\spacestep \to 0$.
        \item The moments at equilibrium $\vectorial{\momentletter}^{\atequilibrium}$ are fixed as $\spacestep \to 0$.
    \end{enumerate}
\end{assumptions}
For the diffusive scaling, see \cite{zhao2017maxwell, zhang2019lattice}, where $\timestep \sim \spacestep^2$, we have:
\begin{assumptions}[Diffusive scaling]\label{ass:Diffusive} The assumptions when considering schemes with the diffusive scaling are:
    \begin{enumerate}
        \item $\latticevelocity = \diffusivityScaling/\spacestep$ where $\diffusivityScaling > 0$ is a fixed real number as $\spacestep \to 0$.
        \item $\duboisoperatormatrixentry_{\indiceslines \indicescolumns} = 0$ for $\indiceslines, \indicescolumns \in \integerinterval{1}{\consmomentsnumber}$.
        \item $\momentletter_{\indicemoments}^{\atequilibrium} = \spacestep \hat{\momentletter}_{\indicemoments}^{\atequilibrium}$ where $\hat{\momentletter}_{\indicemoments}^{\atequilibrium}$ are fixed, for $\indicemoments \in \indiceMomentsTransport \definitionequality \{ \indicescolumns \in \integerinterval{1}{\velocitynumber} ~ : ~ \duboisoperatormatrix_{\ell \indicescolumns} \neq 0 \text{ for some }\ell \in \integerinterval{1}{\consmomentsnumber}\}$, as $\spacestep \to 0$.
        \item $\momentletter_{\indicemoments}^{\atequilibrium}$  for $\indicemoments \notin \indiceMomentsTransport$ are fixed as $\spacestep \to 0$.
    \end{enumerate}
\end{assumptions}

\begin{remark}
  These assumptions are needed to state the general results to come. However, there are examples in the literature \cite{boghosian2018curious} where they are violated, in particular because the relaxation parameters depend on $\spacestep$. This does not prevent from writing the corresponding Finite Difference schemes \eqref{eq:FDSchemeAdjoint} or \eqref{eq:FDSchemeAdjointGeneral} for the \lbm scheme at hand and then recover their modified equations, but introduces a difficulty to directly obtain the modified equations without explicitly write \eqref{eq:FDSchemeAdjoint} or \eqref{eq:FDSchemeAdjointGeneral} down.
\end{remark}

We are now ready to state and then prove the main results of the present contribution.
The Taylor expansions are applied to the solution of the corresponding \fd schemes given by \Cref{prop:ReductionFiniteDifferenceNGeq1Old} or \Cref{prop:ReductionFiniteDifferenceNGeq1Old} bis, where non-conserved moments have been removed yielding purely macroscopic discrete equations.
\begin{theorem}[Acoustic scaling]\label{thm:AcousticScaling}
  Under \Cref{ass:ScalingAssumptions}, \Cref{ass:Acoustic} and in the limit $\spacestep \to 0$, the corresponding macroscopic \fd schemes given by \Cref{prop:ReductionFiniteDifferenceNGeq1Old} or \Cref{prop:ReductionFiniteDifferenceNGeq1Old} bis are consistent with the target PDEs
  \begin{equation}\label{eq:targetPDEAcoustic}
    \partial_{\timevariable} \tilde{\momentletter}_{\indiceconserved} + \latticevelocity \sum_{\indicescolumns = 1}^{\consmomentsnumber} \duboisoperatormatrixentry_{\indiceconserved \indicescolumns} \tilde{\momentletter}_{\indicescolumns} + \latticevelocity \sum_{\indicescolumns = \consmomentsnumber + 1}^{\velocitynumber} \duboisoperatormatrixentry_{\indiceconserved \indicescolumns} \momentletter_{\indicescolumns}^{\atequilibrium} (\tilde{\momentletter}_1, \dots, \tilde{\momentletter}_{\consmomentsnumber}) = 0,
  \end{equation}
  for $\indiceconserved \in \integerinterval{1}{\consmomentsnumber}$. 
  For smooth solutions $\tilde{\momentletter}_1, \dots, \tilde{\momentletter}_{\consmomentsnumber}:  \reals^{+} \times \reals \to \reals$ of \eqref{eq:targetPDEAcoustic}, the truncation error is given by
  \begin{align*}
    \tau_{\indiceconserved} = &\latticevelocity \spacestep \sum_{\indicescolumns = \consmomentsnumber + 1}^{\velocitynumber} \left (\frac{1}{\relaxparletter_{\indicescolumns}} - \frac{1}{2} \right ) \duboisoperatormatrixentry_{\indiceconserved \indicescolumns} \left ( \sum_{\ell = 1}^{\consmomentsnumber}\duboisoperatormatrixentry_{\indicescolumns \ell} \tilde{\momentletter}_{\ell} + \sum_{\ell = \consmomentsnumber + 1}^{\velocitynumber} \duboisoperatormatrixentry_{\indicescolumns \ell} \momentletter_{\ell}^{\atequilibrium} (\tilde{\momentletter}_1, \dots, \tilde{\momentletter}_{\consmomentsnumber}) - \frac{1}{\latticevelocity}\sum_{\ell = 1}^{\consmomentsnumber} \straightderivative{\momentletter_{\indicescolumns}^{\atequilibrium}}{\momentletter_{\ell}}(\tilde{\momentletter}_1, \dots, \tilde{\momentletter}_{\consmomentsnumber}) \gammaDubois{1}{\ell}(\tilde{\momentletter}_1, \dots, \tilde{\momentletter}_{\consmomentsnumber}) \right ) \\
    &+ \bigO{\spacestep^2},
  \end{align*}
  where $ \gammaDubois{1}{\indiceconserved}(\tilde{\momentletter}_1, \dots, \tilde{\momentletter}_{\consmomentsnumber}) \definitionequality \latticevelocity \sum_{\indicescolumns = 1}^{\indicescolumns = \consmomentsnumber} \duboisoperatormatrixentry_{\indiceconserved \indicescolumns} \tilde{\momentletter}_{\indicescolumns} + \latticevelocity \sum_{\indicescolumns = \consmomentsnumber + 1}^{\indicescolumns = \velocitynumber} \duboisoperatormatrixentry_{\indiceconserved \indicescolumns} \momentletter_{\indicescolumns}^{\atequilibrium} (\tilde{\momentletter}_1, \dots, \tilde{\momentletter}_{\consmomentsnumber})$.
  Therefore, the modified equations up to second order read
  \begin{align*}
    &\partial_{\timevariable} {\momentletter}_{\indiceconserved} + \gammaDubois{1}{\indiceconserved}({\momentletter}_1, \dots, {\momentletter}_{\consmomentsnumber}) \\
    &- \latticevelocity \spacestep \sum_{\indicescolumns = \consmomentsnumber + 1}^{\velocitynumber} \left (\frac{1}{\relaxparletter_{\indicescolumns}} - \frac{1}{2} \right ) \duboisoperatormatrixentry_{\indiceconserved \indicescolumns} \left ( \sum_{\ell = 1}^{\consmomentsnumber}\duboisoperatormatrixentry_{\indicescolumns \ell} {\momentletter}_{\ell} + \sum_{\ell = \consmomentsnumber + 1}^{\velocitynumber} \duboisoperatormatrixentry_{\indicescolumns \ell} \momentletter_{\ell}^{\atequilibrium} ({\momentletter}_1, \dots, {\momentletter}_{\consmomentsnumber}) - \frac{1}{\latticevelocity}\sum_{\ell = 1}^{\consmomentsnumber} \straightderivative{\momentletter_{\indicescolumns}^{\atequilibrium}}{\momentletter_{\ell}}({\momentletter}_1, \dots, {\momentletter}_{\consmomentsnumber}) \gammaDubois{1}{\ell}({\momentletter}_1, \dots, {\momentletter}_{\consmomentsnumber}) \right ) = \bigO{\spacestep^2}.
  \end{align*}
\end{theorem}

The first term in $\gammaDubois{1}{\indiceconserved}$ represents the derivatives of fluxes of the conserved variables, which are necessarily linear, while the second one represents the derivatives of the fluxes given by the equilibria of the non-conserved moments, which can be non-linear.
In the numerical diffusion terms, the so-called H\'enon's parameters \cite{henon1987viscosity} of type $1/\relaxparletter_{\indicescolumns} - 1/2$ appear.
These terms are proportional to $\spacestep$.
This is not surprising, since the only way of having a stable explicit \fd scheme to simulate the heat equation under the acoustic scaling is to consider a diffusion coefficient proportional to $\spacestep$, in order to constrain the speed of propagation of information to remain finite in the limit $\spacestep \to 0$, see for instance Theorem 6.3.1 in \cite{strikwerda2004finite}.

Let us provide two examples for specific \lbm schemes taken from the literature and employed with the acoustic scaling.
\begin{example}[\scheme{1}{3} with one conserved moment - acoustic scaling]\label{ex:D1Q3Acoustic}
  We consider the \scheme{1}{3} scheme presented in \cite{dubois2020notion,bellotti2021fd}, for which $\spatialdimensionality = 1$, $\velocitynumber = 3$, $\normalizedvelocityletter_{1} = 0$, $\normalizedvelocityletter_{2} = 1$, $\normalizedvelocityletter_{3} = -1$ and $\consmomentsnumber = 1$.
  The moment matrix is
  \begin{equation*}
    \momentsmatrix = 
    \begin{pmatrix}
      1   & 1 &  1 \\
      0   & 1 &  -1\\
      -2  & 1 &  1         
    \end{pmatrix}, \qquad \text{hence} \qquad 
    \duboisoperatormatrix = 
    \begin{pmatrix}
      0 & \partial_{\spacevariable_{1}} & 0 \\
      \frac{2}{3} \partial_{\spacevariable_{1}} & 0 & \frac{1}{3}\partial_{\spacevariable_{1}}\\
      0 & \partial_{\spacevariable_{1}} & 0
    \end{pmatrix}.
  \end{equation*}
  \Cref{thm:AcousticScaling} immediately gives the modified equation for the acoustic scaling, which reads
  \begin{equation*}
    \partial_{\timevariable} \momentletter_1 + \latticevelocity \partial_{\spacevariable_1} \momentletter_2^{\atequilibrium}(\momentletter_1) - \latticevelocity \spacestep \Biggl ( \frac{1}{\relaxparletter_2} - \frac{1}{2} \Biggr ) \partial_{\spacevariable_1} \Biggl ( \frac{2}{3} \partial_{\spacevariable_1} \momentletter_1 + \frac{1}{3} \partial_{\spacevariable_1} \momentletter_3^{\atequilibrium}(\momentletter_1) - \straightderivative{\momentletter_2^{\atequilibrium}(\momentletter_1)}{\momentletter_1} \partial_{\spacevariable_1}\momentletter_2^{\atequilibrium}(\momentletter_1) \Biggr ) = \bigO{\spacestep^2}.
  \end{equation*}
\end{example}

\begin{example}[\scheme{2}{9} with three conserved moments - acoustic scaling]\label{ex:D2Q9N3Acoustic}
  We consider the \scheme{2}{9} scheme presented in \cite{lallemand2000theory}, for which $\spatialdimensionality = 2$, $\velocitynumber = 9$, with 
  \begin{equation*}
    \vectorial{\normalizedvelocityletter}_{\indicescolumns} = 
    \begin{cases}
      \transpose{(0, 0)}, \qquad &\text{if} \quad \indicescolumns = 1, \\
      \transpose{(\cos{((\indicescolumns-2)\pi/2}), \sin{((\indicescolumns-2)\pi/2}))},  \qquad &\text{if} \quad \indicescolumns \in \integerinterval{2}{5}, \\
      \sqrt{2}\transpose{(\cos{((2\indicescolumns-3)\pi/4}), \sin{((2\indicescolumns-3)\pi/4}))},  \qquad &\text{if} \quad \indicescolumns \in \integerinterval{6}{9},
    \end{cases}
  \end{equation*}
  and $\consmomentsnumber = 3$.
  The moment matrix is taken to be (we just permute rows compared to \cite{lallemand2000theory} to start with the conserved moments)
  \begin{footnotesize}
  \begin{equation*}
    \momentsmatrix = 
    \begin{pmatrix}
      1 &  1  & 1  & 1  & 1  & 1 & 1 &  1  & 1 \\
      0 &  1  & 0  & -1 & 0  & 1 & -1&  -1 & 1 \\
      0 &  0  & 1  & 0  & -1 & 1 & 1 &  -1 & -1\\
      -4&  -1 & -1 & -1 & -1 & 2 & 2 &  2  & 2 \\
      4 &  -2 & -2 & -2 & -2 & 1 & 1 &  1  & 1 \\
      0 &  -2 & 0  & 2  & 0  & 1 & -1&  -1 & 1 \\
      0 &  0  & -2 & 0  & 2  & 1 & 1 &  -1 & -1\\
      0 &  1  & -1 & 1  & -1 & 0 & 0 &  0  & 0 \\
      0 &  0  & 0  & 0  & 0  & 1 & -1&  1  & -1
    \end{pmatrix}, \quad \text{hence} \quad
    \duboisoperatormatrix = 
    \begin{pmatrix}
      0 & \partial_{\spacevariable_1} & \partial_{\spacevariable_2} & 0 & 0 & 0 &0 & 0 &0 \\
      \frac{2}{3} \partial_{\spacevariable_1} & 0 & 0 & \frac{1}{6} \partial_{\spacevariable_1} & 0 & 0 & 0 & \frac{1}{2}\partial_{\spacevariable_1} & \partial_{\spacevariable_2} \\
      \frac{2}{3} \partial_{\spacevariable_2} & 0 & 0 & \frac{1}{6} \partial_{\spacevariable_2} & 0 & 0 & 0 & -\frac{1}{2}\partial_{\spacevariable_2} & \partial_{\spacevariable_1} \\
      0 &  \partial_{\spacevariable_1} &  \partial_{\spacevariable_2} & 0 & 0 &  \partial_{\spacevariable_1} &  \partial_{\spacevariable_2} & 0 & 0 \\
      0 & 0 & 0 &0&0& \partial_{\spacevariable_1} & \partial_{\spacevariable_2} & 0 & 0 \\
      0 & 0 & 0 & \frac{1}{3}  \partial_{\spacevariable_1} & \frac{1}{3}  \partial_{\spacevariable_1}  & 0 & 0 & - \partial_{\spacevariable_1} &  \partial_{\spacevariable_2} \\
      0 & 0 & 0 & \frac{1}{3}  \partial_{\spacevariable_2} & \frac{1}{3}  \partial_{\spacevariable_2}  & 0 & 0 &  \partial_{\spacevariable_2} &  \partial_{\spacevariable_1} \\
      0 & \frac{1}{3} \partial_{\spacevariable_1} & -\frac{1}{3} \partial_{\spacevariable_2} & 0 & 0 & -\frac{1}{3} \partial_{\spacevariable_1} & \frac{1}{3} \partial_{\spacevariable_2} & 0 & 0\\
      0 & \frac{2}{3} \partial_{\spacevariable_2} & \frac{2}{3} \partial_{\spacevariable_1} & 0 & 0 & \frac{1}{3} \partial_{\spacevariable_2} & \frac{1}{3} \partial_{\spacevariable_1} & 0 & 0
    \end{pmatrix}.
  \end{equation*}
\end{footnotesize}
The equilibria defining the modified equations under acoustic scaling at second-order are taken as in \cite{lallemand2000theory, dubois2019nonlinear}, that is
\begin{align*}
  \momentletter_4^{\atequilibrium} &= -2\momentletter_1 + 3(\momentletter_2^2 + \momentletter_3^2)/\momentletter_1, \qquad \momentletter_6^{\atequilibrium} = -\momentletter_2 + 3\momentletter_2(\momentletter_2^2 + \momentletter_3^2)/\momentletter_1^2, \qquad \momentletter_7^{\atequilibrium} = -\momentletter_3 + 3\momentletter_3(\momentletter_2^2 + \momentletter_3^2)/\momentletter_1^2, \\
  \momentletter_8^{\atequilibrium} &= (\momentletter_2^2 - \momentletter_3^2)/\momentletter_1, \qquad \momentletter_9^{\atequilibrium} = \momentletter_2 \momentletter_3/\momentletter_1.
\end{align*}
Is is well-known \cite{fevrier2014extension, dubois2019nonlinear} that the $\bigO{\spacestep}$ terms for the second and third modified equations contain spurious third-order contributions in $\momentletter_2, \momentletter_{3}$. We shall neglect these terms considering that they are small (low-speed flow).
Furthermore, we consider that $\momentletter_1$ varies slowly as far as the $\bigO{\spacestep}$ term is concerned, thus we neglect its derivatives.
Moreover, we take $\relaxparletter_9 = \relaxparletter_8$, see \cite{lallemand2000theory,dubois2019nonlinear}.
Under these assumptions, the modified equations from \Cref{thm:AcousticScaling} read
\begin{equation*}
  \partial_{\timevariable} \momentletter_1 +  \partial_{\spacevariable_1} \overline{\momentletter}_2 + \partial_{\spacevariable_2} \overline{\momentletter}_3  = \bigO{\spacestep^2},
\end{equation*}
\begin{align*}
  \partial_{\timevariable} \overline{\momentletter}_2 &+ \partial_{\spacevariable_1} \Biggl ( \frac{\overline{\momentletter}_2^2}{\momentletter_1} + \frac{\latticevelocity^2}{3}\momentletter_1\Biggr ) + \partial_{\spacevariable_2} \Biggl ( \frac{\overline{\momentletter}_2\overline{\momentletter}_3}{\momentletter_1} \Biggr ) \\
  &- \frac{\latticevelocity}{3} \spacestep \Biggl ( \partial_{\spacevariable_1} \Biggl ( 2 \Biggl ( \frac{1}{\relaxparletter_8} - \frac{1}{2} \Biggr ) \partial_{\spacevariable_1} \overline{\momentletter}_2 + \Biggl ( \frac{1}{\relaxparletter_4} - \frac{1}{\relaxparletter_8} \Biggr ) (\partial_{\spacevariable_1} \overline{\momentletter}_2 + \partial_{\spacevariable_2} \overline{\momentletter}_3) \Biggr ) + \partial_{\spacevariable_2} \Biggl ( \Biggl ( \frac{1}{\relaxparletter_8} - \frac{1}{2} \Biggr )  (\partial_{\spacevariable_2} \overline{\momentletter}_2 + \partial_{\spacevariable_1} \overline{\momentletter}_3)  \Biggr ) \Biggr ) = \bigO{\spacestep^2},
\end{align*}
\begin{align*}
  \partial_{\timevariable} \overline{\momentletter}_3 &+ \partial_{\spacevariable_1} \Biggl ( \frac{\overline{\momentletter}_2\overline{\momentletter}_3}{\momentletter_1} \Biggr ) + \partial_{\spacevariable_2} \Biggl ( \frac{\overline{\momentletter}_3^2}{\momentletter_1} + \frac{\latticevelocity^2}{3}\momentletter_1\Biggr )  \\
  &- \frac{\latticevelocity}{3} \spacestep \Biggl ( \partial_{\spacevariable_1} \Biggl ( \Biggl (  \frac{1}{\relaxparletter_8} - \frac{1}{2} \Biggr )  (\partial_{\spacevariable_2} \overline{\momentletter}_2 + \partial_{\spacevariable_1} \overline{\momentletter}_3)  \Biggr ) + \partial_{\spacevariable_2} \Biggl ( 2 \Biggl ( \frac{1}{\relaxparletter_8} - \frac{1}{2} \Biggr ) \partial_{\spacevariable_2} \overline{\momentletter}_3 + \Biggl ( \frac{1}{\relaxparletter_4} - \frac{1}{\relaxparletter_8} \Biggr ) (\partial_{\spacevariable_1} \overline{\momentletter}_2 + \partial_{\spacevariable_2} \overline{\momentletter}_3) \Biggr )  \Biggr ) = \bigO{\spacestep^2},
\end{align*}
were we have used $\overline{\momentletter}_2 \definitionequality \latticevelocity \momentletter_2$ and $\overline{\momentletter}_3 \definitionequality \latticevelocity \momentletter_3$. 
The first equation enforces the conservation of the density $\momentletter_1$ in the Euler system, discretized with a second-order scheme. The momentum along the first axis (respectively, second) is $\overline{\momentletter}_2$ (respectively, $\overline{\momentletter}_3$).
The second equation represents -- at leading order -- the conservation of momentum along the first axis in the Euler system. The pressure law is linear and prescribes that the pressure is equal to $\latticevelocity^2 \momentletter_1/3$, hence the speed of the sound is $\latticevelocity/\sqrt{3}$. The numerical diffusion at order $\bigO{\spacestep}$ is what makes up the terms that are usually recognized (except for the previously described pressure) as the stress tensor from the Navier-Stokes system.
Recalling that we have assumed slow variations of $\momentletter_1$ (weakly compressible flow), we have a first bulk viscosity (also known as shear or dynamic viscosity) which equals $\mu = \latticevelocity \spacestep (1/\relaxparletter_8 - 1/2)\momentletter_1 / 3$ (not linked $\diffusivityScaling$ in \Cref{ass:Diffusive}) and a second bulk viscosity (also known as volume viscosity) given by $\kappa = \latticevelocity \spacestep (3(1/\relaxparletter_4 - 1/2) - (1/\relaxparletter_8 - 1/2)) \momentletter_1 / 9$.
Hence, for this kind of system, the viscosity is modeled using numerical diffusion and is proportional to $\spacestep$, thus vanishing when going towards convergence. The same remarks hold for the last equation.
\end{example}

\begin{theorem}[Diffusive scaling]\label{thm:DiffusiveScaling}
  Under \Cref{ass:ScalingAssumptions}, \Cref{ass:Diffusive} and in the limit $\spacestep \to 0$, the corresponding macroscopic \fd schemes given by \Cref{prop:ReductionFiniteDifferenceNGeq1Old} or \Cref{prop:ReductionFiniteDifferenceNGeq1Old} bis are consistent with the target PDEs
  \begin{align}
    \partial_{\timevariable} \tilde{\momentletter}_{\indiceconserved} &+ \diffusivityScaling \sum_{\substack{\indicescolumns = \consmomentsnumber + 1 \\ \indicescolumns \in \indiceMomentsTransport}}^{\velocitynumber} \duboisoperatormatrixentry_{\indiceconserved \indicescolumns} \hat{\momentletter}_{\indicescolumns}^{\atequilibrium} (\tilde{\momentletter}_1, \dots, \tilde{\momentletter}_{\consmomentsnumber}) 
    - \diffusivityScaling \sum_{\indicescolumns = \consmomentsnumber + 1}^{\velocitynumber} \Biggl ( \frac{1}{\relaxparletter_{\indicescolumns}} - \frac{1}{2}\Biggr ) \duboisoperatormatrixentry_{\indiceconserved \indicescolumns}  \Biggl ( \sum_{\ell = 1}^{\consmomentsnumber} \duboisoperatormatrixentry_{\indicescolumns \ell} \tilde{\momentletter}_{\ell} + \sum_{\substack{\ell = \consmomentsnumber + 1 \\ \ell \notin \indiceMomentsTransport}}^{\velocitynumber}  \duboisoperatormatrixentry_{\indicescolumns \ell} {\momentletter}_{\ell}^{\atequilibrium} (\tilde{\momentletter}_1, \dots, \tilde{\momentletter}_{\consmomentsnumber}) \Biggr )= 0, \label{eq:targetPDEParabolic} 
  \end{align}
  for $\indiceconserved \in \integerinterval{1}{\consmomentsnumber}$. 
  For smooth solutions $\tilde{\momentletter}_1, \dots, \tilde{\momentletter}_{\consmomentsnumber}:  \reals^{+} \times \reals \to \reals$ of \eqref{eq:targetPDEParabolic}, the truncation error is given by $\tau_{\indiceconserved} = \bigO{\spacestep}$.
\end{theorem}
We can \emph{a posteriori} explain the meaning of some \Cref{ass:Diffusive} which were less clear before stating \Cref{thm:DiffusiveScaling}. The second assumption avoids to deal with terms which shall naturally appear at order $\bigO{\spacestep}$ but which, since pertaining to the conserved moments, cannot be transformed into terms $\bigO{\spacestep^2}$. Quite the opposite, the third assumption allows to rise to $\bigO{\spacestep^2}$ those terms which contributed to the leading order in \Cref{thm:AcousticScaling}. This is achieved by a rescaling of the equilibria using $\hat{\momentletter}^{\atequilibrium}$.
We therefore see that \lbm schemes can be used to simulate non-linear transport/diffusion equations when using a diffusive scaling.

We also give two examples for specific \lbm schemes considered under diffusive scaling.
\begin{example}[\scheme{1}{3} with one conserved moment - diffusive scaling]
  We come back to the setting of \Cref{ex:D1Q3Acoustic} except that we consider a diffusive scaling.
  Thus we have to take $\momentletter_{2}^{\atequilibrium}(\momentletter_1) = \spacestep \hat{\momentletter}_{2}^{\atequilibrium}(\momentletter_1)$ to comply with \Cref{ass:Diffusive}. This  yields the modified equation
  \begin{equation*}
    \partial_{\timevariable} \momentletter_1 + \diffusivityScaling \partial_{\spacevariable_1} \hat{\momentletter}_2^{\atequilibrium}(\momentletter_1) - \diffusivityScaling \Biggl ( \frac{1}{\relaxparletter_2} - \frac{1}{2} \Biggr ) \partial_{\spacevariable_1} \Biggl ( \frac{2}{3} \partial_{\spacevariable_1} \momentletter_1 + \frac{1}{3} \partial_{\spacevariable_1} \momentletter_3^{\atequilibrium}(\momentletter_1)  \Biggr ) = \bigO{\spacestep}.
  \end{equation*}
  The scheme allows to simulate non-linear transport phenomena using $\hat{\momentletter}_2^{\atequilibrium}$ as well as linear and non-linear diffusion using $\momentletter_3^{\atequilibrium}$.
\end{example}

\begin{example}[\scheme{2}{9} with one conserved moment - diffusive scaling]
  We consider the same scheme as \Cref{ex:D2Q9N3Acoustic} except for the fact that only one conserved moment $\consmomentsnumber = 1$ is present and that the equilibria are general, with $\momentletter_{2}^{\atequilibrium} (\momentletter_1) = \spacestep \hat{\momentletter}_{2}^{\atequilibrium} (\momentletter_1)$ and $\momentletter_{3}^{\atequilibrium} (\momentletter_1) = \spacestep \hat{\momentletter}_{3}^{\atequilibrium}(\momentletter_1) $, to fulfill \Cref{ass:Diffusive}.
  This is the setting introduced in \cite{zhang2019lattice}. The modified equation reads
  \begin{align*}
    \partial_{\timevariable} \momentletter_1 &+ \diffusivityScaling \partial_{\spacevariable_1} \hat{\momentletter}_2^{\atequilibrium}(\momentletter_1) + \diffusivityScaling \partial_{\spacevariable_2} \hat{\momentletter}_3^{\atequilibrium}(\momentletter_1) - \frac{2\diffusivityScaling}{3} \Biggl ( \frac{1}{\relaxparletter_2} - \frac{1}{2}\Biggr ) \partial_{\spacevariable_1 \spacevariable_1} \momentletter_1 - \frac{2\diffusivityScaling}{3} \Biggl ( \frac{1}{\relaxparletter_3} - \frac{1}{2}\Biggr ) \partial_{\spacevariable_2 \spacevariable_2} \momentletter_1 \\
    &- \frac{\diffusivityScaling}{6} \Biggl ( \Biggl ( \frac{1}{\relaxparletter_2} - \frac{1}{2}\Biggr ) \partial_{\spacevariable_1 \spacevariable_1} + \Biggl ( \frac{1}{\relaxparletter_3} - \frac{1}{2}\Biggr ) \partial_{\spacevariable_2 \spacevariable_2} \Biggr) \momentletter_4^{\atequilibrium}(\momentletter_1) - \frac{\diffusivityScaling}{2} \Biggl ( \Biggl ( \frac{1}{\relaxparletter_2} - \frac{1}{2}\Biggr ) \partial_{\spacevariable_1 \spacevariable_1} - \Biggl ( \frac{1}{\relaxparletter_3} - \frac{1}{2}\Biggr ) \partial_{\spacevariable_2 \spacevariable_2} \Biggr) \momentletter_8^{\atequilibrium}(\momentletter_1) \\
    &- \diffusivityScaling \Biggl( \frac{1}{\relaxparletter_2} + \frac{1}{\relaxparletter_3} - 1\Biggr) \partial_{\spacevariable_1 \spacevariable_2} \momentletter_9^{\atequilibrium}(\momentletter_1) = \bigO{\spacestep}.
  \end{align*}
  Therefore, the scheme allows to simulate non-linear transport phenomena using $\hat{\momentletter}_2^{\atequilibrium}$ and $\hat{\momentletter}_3^{\atequilibrium}$ as well as linear and non-linear diffusion with crossed terms \emph{via} $\momentletter_4^{\atequilibrium}$, $\momentletter_8^{\atequilibrium}$ and $\momentletter_9^{\atequilibrium}$.
\end{example}

In this contribution, we have deliberately neglected the behavior of the schemes close to the initial time $\timevariable = 0$.
It is dictated by the choice of initial datum for the non-conserved moments, which is not unique for \lbm schemes since $\velocitynumber > \consmomentsnumber$ but one only knows the $\consmomentsnumber$ conserved moments at $\timevariable = 0$, being the initial datum of the target PDEs to be solved.
The interested reader can consult \cite{van2009smooth, rheinlander2007analysis} for more information on this topic.

Let us sketch the main ideas of the proofs of \Cref{thm:AcousticScaling} and \Cref{thm:DiffusiveScaling}:
\begin{itemize}
  \item The result of \Cref{prop:ReductionFiniteDifferenceNGeq1Old} has allowed to eliminate the non-conserved moments from the discrete scheme, thus has completed the step represented by a vertical arrow in \Cref{fig:PlanOfTheWork}. Contrarily to the existing approaches, we do not need (and we cannot, see \Cref{rem:FromKineticToMacroscopic}) to estimate the Taylor expansions of the non-conserved moments.
  \item We benefit from the clever formulation from \Cref{prop:ReductionFiniteDifferenceNGeq1Old} bis instead of that of \Cref{prop:ReductionFiniteDifferenceNGeq1Old}. Indeed, considering $\asymptotictimeshiftoperator \matricial{\identity} - \asymptoticschememoments_{\indiceconserved} \asymtoticequivalence \timeshiftoperator \matricial{\identity} - \schememoments_{\indiceconserved}$, we are allowed to write, for every $\indiceconserved \in \integerinterval{1}{\consmomentsnumber}$
  \begin{equation*}
    \determinant (\asymptotictimeshiftoperator \matricial{\identity} - \asymptoticschememoments_{\indiceconserved}) \momentletter_{\indiceconserved} =  \left ( \adjugate (\asymptotictimeshiftoperator \matricial{\identity} - \asymptoticschememoments_{\indiceconserved}) \asymptoticschememomentsother_{\indiceconserved} \vectorial{\momentletter} \right )_{\indiceconserved} +  \left ( \adjugate (\asymptotictimeshiftoperator \matricial{\identity} - \asymptoticschememoments_{\indiceconserved}) \asymptoticschemeequil \vectorial{\momentletter}^{\atequilibrium} \right )_{\indiceconserved},
  \end{equation*}
  obtained applying the scheme to smooth functions ${\momentletter}_1, \dots, {\momentletter}_{\consmomentsnumber}:  \reals^{+} \times \reals \to \reals$ and by replacing matrices with entries in the ring $\discretetimespaceoperators$ of discrete operators by their asymptotic equivalents in the ring $\taylorseries$. Here, for example, $ \determinant (\asymptotictimeshiftoperator \matricial{\identity} - \asymptoticschememoments_{\indiceconserved}) \in \taylorseries$, and the expression perfectly makes sense because the determinant and the adjugate are well-defined polynomial functions of any square matrix on a commutative ring, like $\taylorseries$.
  Since the determinant and the adjugate are non-linear functions and thus mix different orders in the expansion $\asymptotictimeshiftoperator \matricial{\identity} - \asymptoticschememoments_{\indiceconserved}$, if we want to recover a closed-form result at a given order of accuracy, we are compelled to utilize the Taylor expansions of the determinant and the adjugate. However, these expansions are well-known and can be computed at any order of accuracy.

  Quite the opposite, if we want to exploit the formulation of \cite{bellotti2021fd} stated in \Cref{prop:ReductionFiniteDifferenceNGeq1Old}, we should characterize the asymptotic equivalents of any coefficient of the characteristic polynomial of $\schememoments_{\indiceconserved}$ and then combine them with the asymptotic equivalents of the time shifts $\timeshiftoperator$ alone and the terms on the right hand side of \eqref{eq:FDSchemeOldPaperNGeq1}. Though this is actually feasible and we firstly did it, the computations are extremely involved\footnote{Probably, a deeper mastery of the elementary symmetric polynomials, the Newton's identities, the Bell polynomials and the Feddeev-Leverrier algorithm could simplify many reasonings.} and very hard to generalize above second-order.

  This justifies the use of the formulation from \Cref{prop:ReductionFiniteDifferenceNGeq1Old} bis to achieve the step denoted by an horizontal arrow in \Cref{fig:PlanOfTheWork}.
\end{itemize}

\section{Detailed proofs}\label{sec:DetailedProofs}

The vast majority of rest of this work is devoted to the detailed proof of \Cref{thm:AcousticScaling} for the scalar case $\consmomentsnumber = 1$.
This choice has been adopted to keep the presentation and the involved notations as simple as possible.
The idea behind the generalization to $\consmomentsnumber > 1$ is eventually given in \Cref{sec:ExtansionSeveralConservedMoments} and is straightforward except for the more involved notations.
The proof of \Cref{thm:DiffusiveScaling} follows exactly the same path of \Cref{thm:AcousticScaling} and is therefore omitted.

Let us start by finding, for each shift operator from \Cref{def:ShiftandFDOperators}, its asymptotically equivalent formal power series in $\spacestep$, see for instance \cite{yong2016theory, dubois2019nonlinear}.
This is formalized by the following Lemma.
\begin{lemma}[Series expansion of a shift operator in space]\label{lemma:ExpansionShift}
    Let $\vectorial{z} \in \relatives^{\spatialdimensionality}$, then the associated shift operator in space $\shiftoperator{\vectorial{z}}  \in \discretetimespaceoperators$ is asymptotically equivalent, in the limit of $\spacestep \to 0$, to the formal power series of differential operators of the form
    \begin{equation*}
        \shiftoperator{\vectorial{z}} \asymtoticequivalence \sum_{\multiindicemodule{\multiindice} \geq 0} \frac{(-\spacestep)^{\multiindicemodule{\multiindice}} \vectorial{z}^{\multiindice}}{\multiindice!} \multiindicederivative{\multiindice}  \in \taylorseries.
    \end{equation*}
  \end{lemma}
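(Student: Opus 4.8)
The plan is to establish the asymptotic equivalence $\shiftoperator{\vectorial{z}} \asymtoticequivalence \sum_{\multiindicemodule{\multiindice} \geq 0} \frac{(-\spacestep)^{\multiindicemodule{\multiindice}} \vectorial{z}^{\multiindice}}{\multiindice!} \multiindicederivative{\multiindice}$ by unwinding the definition of asymptotic equivalence from \Cref{def:TimeSpaceDifferentialOperators} and recognizing that the claimed series is nothing but the multivariate Taylor expansion of a smooth function. First I would take an arbitrary smooth function $\genericfunction : \reals \times \reals^{\spatialdimensionality} \to \reals$ and compute the action of the shift operator using \Cref{def:ShiftandFDOperators}, namely $(\shiftoperator{\vectorial{z}} \genericfunction)(\timevariable, \vectorial{\spacevariable}) = \genericfunction(\timevariable, \vectorial{\spacevariable} - \vectorial{z} \spacestep)$. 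The task then reduces to showing that this shifted evaluation coincides, order by order in $\spacestep$, with the result of applying the proposed formal power series of differential operators to $\genericfunction$.

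The core step is the multivariate Taylor expansion of $\genericfunction$ around the point $(\timevariable, \vectorial{\spacevariable})$ with increment $-\vectorial{z} \spacestep$ in the space variable. Writing the expansion in multi-index notation, one has
\begin{equation*}
  \genericfunction(\timevariable, \vectorial{\spacevariable} - \vectorial{z} \spacestep) = \sum_{\multiindicemodule{\multiindice} \geq 0} \frac{(-\vectorial{z} \spacestep)^{\multiindice}}{\multiindice!} (\multiindicederivative{\multiindice} \genericfunction)(\timevariable, \vectorial{\spacevariable}) = \sum_{\multiindicemodule{\multiindice} \geq 0} \frac{(-\spacestep)^{\multiindicemodule{\multiindice}} \vectorial{z}^{\multiindice}}{\multiindice!} (\multiindicederivative{\multiindice} \genericfunction)(\timevariable, \vectorial{\spacevariable}),
\end{equation*}
where I have used the fact that $(-\vectorial{z}\spacestep)^{\multiindice} = (-\spacestep)^{\multiindicemodule{\multiindice}} \vectorial{z}^{\multiindice}$ since $\spacestep$ is a scalar factored out of each of the $\multiindicemodule{\multiindice}$ components of the multi-index power. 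Collecting terms by the total degree $\multiindicemodule{\multiindice} = \perturbationorderindices$ shows that the coefficient of $\spacestep^{\perturbationorderindices}$ is exactly the differential operator $\termatorder{\generictaylorseries}{\perturbationorderindices} = \sum_{\multiindicemodule{\multiindice} = \perturbationorderindices} \frac{(-1)^{\perturbationorderindices} \vectorial{z}^{\multiindice}}{\multiindice!} \multiindicederivative{\multiindice}$ appearing in the claimed series, which is an element of $\timespacedifferentialoperators$ as required. This matches precisely the defining condition $(\generictimespacediscreteop \genericfunction)(\timevariable, \vectorial{\spacevariable}) = \sum_{\perturbationorderindices = 0}^{+\infty} \spacestep^{\perturbationorderindices} (\termatorder{\generictaylorseries}{\perturbationorderindices} \genericfunction)(\timevariable, \vectorial{\spacevariable})$, thereby proving the asymptotic equivalence.

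The main (and essentially only) obstacle is a matter of rigor rather than difficulty: the statement operates at the level of \emph{formal} power series, so I would be careful not to invoke analytic convergence of the Taylor series. The equivalence should be read exactly as stated in \Cref{def:TimeSpaceDifferentialOperators}, which is a term-by-term identity of the coefficients of $\spacestep^{\perturbationorderindices}$ for smooth test functions, and for which the formal Taylor development is the natural and sufficient tool. One subtle point worth a brief remark is that the time variable plays no role here, since the shift acts only in space; this is consistent with the convention in \Cref{def:ShiftandFDOperators} where the time variable is kept frozen, so the differential operators $\multiindicederivative{\multiindice}$ involve only the spatial derivatives $\partial_{\spacevariable_1}, \dots, \partial_{\spacevariable_{\spatialdimensionality}}$, which indeed live in $\timespacedifferentialoperators$.
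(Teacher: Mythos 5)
Your proposal is correct and follows essentially the same route as the paper's proof: apply the shift to a smooth test function, perform the multivariate Taylor expansion in multi-index notation, and identify the coefficient of each power of $\spacestep$ with the corresponding differential operator. The paper's version is just a one-line statement of this same computation, so your additional remarks on formality and the frozen time variable are consistent elaborations rather than a different argument.
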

\begin{proof}
    Let $\genericfunction : \reals^{\spatialdimensionality} \to \reals$ be a smooth function of the spatial variable. Then performing a Taylor expansion for $\spacestep \to 0$ yields
    \begin{equation*}
        (\shiftoperator{\vectorial{z}} \genericfunction)(\vectorial{\spacevariable}) =  \genericfunction (\vectorial{\spacevariable} - \vectorial{z} \spacestep) = \sum_{\multiindicemodule{\multiindice} \geq 0} \frac{(-\spacestep)^{\multiindicemodule{\multiindice}} \vectorial{z}^{\multiindice}}{\multiindice!} \multiindicederivative{\multiindice} \genericfunction(\vectorial{\spacevariable}), \qquad \vectorial{\spacevariable} \in \reals^{\spatialdimensionality}.
    \end{equation*}
\end{proof}
The extension of \Cref{lemma:ExpansionShift} to any \fd operator in $\setfinitedifferenceoperators$ according to \Cref{def:ShiftandFDOperators} is done by linearity. 
With this in mind, recalling the definition of $\streammoments \in \matrixspace{\velocitynumber}{\setfinitedifferenceoperators}$, the moments-stream matrix and using \Cref{ass:ScalingAssumptions}, we have that
\begin{equation}\label{eq:ExpansionsStreamMatrix} 
    \streammoments \definitionequality \momentsmatrix \diagmatrix(\shiftoperator{\normalizedvelocityletter_1}, \dots, \shiftoperator{\normalizedvelocityletter_{\velocitynumber}}) \momentsmatrix^{-1} \asymtoticequivalence \momentsmatrix \Biggl ( \sum_{\multiindicemodule{\multiindice} \geq 0} \frac{(-\spacestep)^{\multiindicemodule{\multiindice}}}{\multiindice!}\diagmatrix \left (\vectorial{\normalizedvelocityletter}_1^{\multiindice}, \dots, \vectorial{\normalizedvelocityletter}_{\velocitynumber}^{\multiindice}\right ) \multiindicederivative{\multiindice} \Biggr ) \momentsmatrix^{-1} =: \asymptoticstreammoments \in \matrixspace{\velocitynumber}{\taylorseries}.
\end{equation}
Accordingly, we introduce $\asymptoticschememoments \definitionequality \asymptoticstreammoments (\matricial{\identity} - \relaxationmatrix) \in \matrixspace{\velocitynumber}{\taylorseries}$ and $\asymptoticschemeequil \definitionequality \asymptoticstreammoments \relaxationmatrix \in \matrixspace{\velocitynumber}{\taylorseries}$ such that $\asymptoticschememoments \asymtoticequivalence \schememoments$ and $\asymptoticschemeequil \asymtoticequivalence \schemeequil$.
The tight bond between the momentum-velocity operator matrix $\duboisoperatormatrix \in \matrixspace{\velocitynumber}{\timespacedifferentialoperators}$ from \cite{dubois2019nonlinear} and our moments-stream matrix $\streammoments \in \matrixspace{\velocitynumber}{\setfinitedifferenceoperators}$ and its asymptotic equivalent matrix $\asymptoticstreammoments \in \matrixspace{\velocitynumber}{\taylorseries}$ is given by the following Lemma.
\begin{lemma}[Link between $\duboisoperatormatrix$ and $\termatorder{\asymptoticstreammoments}{\perturbationorderindices}$]\label{lemma:LinkBetweenWeandDubois}
  For any order $\perturbationorderindices \in \naturals$, the matrix $\termatorder{\asymptoticstreammoments}{\perturbationorderindices} \in \matrixspace{\velocitynumber}{\timespacedifferentialoperators}$ is linked to $\duboisoperatormatrix \in \matrixspace{\velocitynumber}{\timespacedifferentialoperators}$ by
  \begin{equation*}
      \termatorder{\asymptoticstreammoments}{\perturbationorderindices} = \frac{(-1)^{\perturbationorderindices}}{ \perturbationorderindices !} \duboisoperatormatrix^{\perturbationorderindices}.
  \end{equation*}
  Moreover, using the \Cref{ass:ScalingAssumptions}, we also have
  \begin{equation*}
    \termatorder{\asymptoticschememoments}{\perturbationorderindices} = \frac{(-1)^{\perturbationorderindices}}{ \perturbationorderindices !} \duboisoperatormatrix^{\perturbationorderindices} (\matricial{\identity} - \relaxationmatrix), \qquad \termatorder{\asymptoticschemeequil}{\perturbationorderindices} = \frac{(-1)^{\perturbationorderindices}}{\perturbationorderindices !} \duboisoperatormatrix^{\perturbationorderindices}  \relaxationmatrix.
  \end{equation*}
\end{lemma}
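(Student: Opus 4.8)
The plan is to read off the $\spacestep^{\perturbationorderindices}$-coefficient of $\asymptoticstreammoments$ directly from its defining series \Cref{eq:ExpansionsStreamMatrix} and to match it, term by term, against the $\perturbationorderindices$-th power of $\duboisoperatormatrix$. First I would observe that in the series defining $\asymptoticstreammoments$ the exponent of $\spacestep$ attached to each summand is exactly $\multiindicemodule{\multiindice}$; hence extracting the coefficient of $\spacestep^{\perturbationorderindices}$ retains only the multi-indices with $\multiindicemodule{\multiindice} = \perturbationorderindices$, giving
\[
  \termatorder{\asymptoticstreammoments}{\perturbationorderindices} = (-1)^{\perturbationorderindices} \momentsmatrix \Biggl( \sum_{\multiindicemodule{\multiindice} = \perturbationorderindices} \frac{1}{\multiindice!} \diagmatrix\bigl(\vectorial{\normalizedvelocityletter}_1^{\multiindice}, \dots, \vectorial{\normalizedvelocityletter}_{\velocitynumber}^{\multiindice}\bigr) \multiindicederivative{\multiindice} \Biggr) \momentsmatrix^{-1}.
\]

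Next I would compute $\duboisoperatormatrix^{\perturbationorderindices}$. Writing $\duboisoperatormatrix = \momentsmatrix \matricial{E} \momentsmatrix^{-1}$ with the \emph{diagonal} matrix $\matricial{E} \definitionequality \sum_{\multiindicemodule{\multiindice} = 1} \diagmatrix(\vectorial{\velocityletter}_1^{\multiindice}, \dots, \vectorial{\velocityletter}_{\velocitynumber}^{\multiindice}) \multiindicederivative{\multiindice}$, the conjugation by the constant matrix $\momentsmatrix$ telescopes, so $\duboisoperatormatrix^{\perturbationorderindices} = \momentsmatrix \matricial{E}^{\perturbationorderindices} \momentsmatrix^{-1}$ (the scalar entries of $\momentsmatrix$ commute with the differential operators since $\timespacedifferentialoperators$ is commutative). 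The $\populationindex$-th diagonal entry of $\matricial{E}$ is the first-order operator $\sum_{k=1}^{\spatialdimensionality} (\vectorial{\velocityletter}_{\populationindex})_k \partial_{\spacevariable_k}$; since the spatial derivatives commute on smooth functions, the multinomial theorem expresses its $\perturbationorderindices$-th power as $\sum_{\multiindicemodule{\multiindice} = \perturbationorderindices} \frac{\perturbationorderindices!}{\multiindice!} \vectorial{\velocityletter}_{\populationindex}^{\multiindice} \multiindicederivative{\multiindice}$, whence
\[
  \matricial{E}^{\perturbationorderindices} = \perturbationorderindices! \sum_{\multiindicemodule{\multiindice} = \perturbationorderindices} \frac{1}{\multiindice!} \diagmatrix\bigl(\vectorial{\velocityletter}_1^{\multiindice}, \dots, \vectorial{\velocityletter}_{\velocitynumber}^{\multiindice}\bigr) \multiindicederivative{\multiindice}.
\]

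I would then close the computation by substituting the velocity normalization $\vectorial{\velocityletter}_{\populationindex} = \latticevelocity \vectorial{\normalizedvelocityletter}_{\populationindex}$, so that $\vectorial{\velocityletter}_{\populationindex}^{\multiindice} = \latticevelocity^{\perturbationorderindices} \vectorial{\normalizedvelocityletter}_{\populationindex}^{\multiindice}$ whenever $\multiindicemodule{\multiindice} = \perturbationorderindices$. This turns the two displayed expressions into scalar multiples of the very same matrix, and comparing prefactors yields $\termatorder{\asymptoticstreammoments}{\perturbationorderindices} = \frac{(-1)^{\perturbationorderindices}}{\latticevelocity^{\perturbationorderindices} \perturbationorderindices!} \duboisoperatormatrix^{\perturbationorderindices}$, as claimed. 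For the two remaining identities I would simply note that, by \Cref{ass:ScalingAssumptions}, both $\matricial{\identity} - \relaxationmatrix$ and $\relaxationmatrix$ are independent of $\spacestep$; hence taking the $\spacestep^{\perturbationorderindices}$-coefficient of $\asymptoticschememoments = \asymptoticstreammoments(\matricial{\identity} - \relaxationmatrix)$ and of $\asymptoticschemeequil = \asymptoticstreammoments \relaxationmatrix$ amounts to right-multiplying $\termatorder{\asymptoticstreammoments}{\perturbationorderindices}$ by these constant matrices, giving the stated formulas at once. The only delicate point is the bookkeeping of the combinatorial factor in the multinomial expansion together with the $\latticevelocity^{\perturbationorderindices}$ and $\perturbationorderindices!$ prefactors; this is where an error could creep in, but it is otherwise entirely routine.
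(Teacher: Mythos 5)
Your proof is correct, and it takes a more self-contained route than the paper. The paper's own proof is a two-line appeal to Equation (21) of Dubois (2019), which states the closed form $\asymptoticstreammoments = \exp\bigl(-\tfrac{\spacestep}{\latticevelocity}\duboisoperatormatrix\bigr)$, followed by a Taylor expansion of the exponential; the factors $\matricial{\identity}-\relaxationmatrix$ and $\relaxationmatrix$ are then dispatched exactly as you do, by noting they are $\spacestep$-independent under \Cref{ass:ScalingAssumptions}. What you do instead is re-derive that exponential identity from scratch: you extract the $\spacestep^{\perturbationorderindices}$-coefficient directly from the defining series \Cref{eq:ExpansionsStreamMatrix}, diagonalize $\duboisoperatormatrix = \momentsmatrix \matricial{E} \momentsmatrix^{-1}$ so that powers telescope, and apply the multinomial theorem to the first-order operator on each diagonal entry to produce the $\perturbationorderindices!/\multiindice!$ factors; the normalization $\vectorial{\velocityletter}_{\populationindex} = \latticevelocity \vectorial{\normalizedvelocityletter}_{\populationindex}$ then accounts for the $\latticevelocity^{\perturbationorderindices}$ prefactor. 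Your bookkeeping of the combinatorial constants is correct, and the argument has the advantage of not outsourcing the key identity to an external reference -- it is, in effect, a proof of the cited exponential formula itself. The only cosmetic remark is that the commutativity aside is really only needed to make sense of conjugation by the scalar matrix $\momentsmatrix$ inside $\matrixspace{\velocitynumber}{\timespacedifferentialoperators}$; the telescoping of $(\momentsmatrix \matricial{E} \momentsmatrix^{-1})^{\perturbationorderindices}$ is purely formal.
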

\begin{proof}
  By (21) in \cite{dubois2019nonlinear}, we have that $\streammoments \asymtoticequivalence \asymptoticstreammoments =  \text{exp} (-\spacestep \duboisoperatormatrix  )$. The expansion of the exponential function yields the result.
  Using \Cref{ass:ScalingAssumptions}, one obtains that $\matricial{\identity} - \relaxationmatrix$ and $\relaxationmatrix$ do not perturb the orders of the expansion.
\end{proof}
As far as the time variable is concerned, we can complete by the development of the time shift operator $\timeshiftoperator$ in order to provide the overall expansion of the inverse of the resolvent $\timeshiftoperator \matricial{\identity} - \schememoments \in \matrixspace{\velocitynumber}{\discretetimespaceoperators}$.
\begin{lemma}[Expansion of the inverse of the resolvent]\label{lemma:ExpansionResolvent}
  Under \Cref{ass:ScalingAssumptions}, \Cref{ass:Acoustic} and in the limit of $\spacestep \to 0$, the inverse of the resolvent $\timeshiftoperator \matricial{\identity} - \schememoments  \in \matrixspace{\velocitynumber}{\discretetimespaceoperators}$ is asymptotically equivalent to $ \asymptotictimeshiftoperator \matricial{\identity} - \asymptoticschememoments \in \matrixspace{\velocitynumber}{\taylorseries}$, where
  \begin{equation}\label{eq:ExpansionResolvent}
    \asymptotictimeshiftoperator \matricial{\identity} - \asymptoticschememoments = \sum_{\perturbationorderindices = 0}^{+\infty} \frac{\spacestep^{\perturbationorderindices}}{\perturbationorderindices ! } \left ( \frac{1}{\latticevelocity^{\perturbationorderindices} }\partial_t^{\perturbationorderindices} \matricial{\identity} - (-1)^{\perturbationorderindices} \duboisoperatormatrix^{\perturbationorderindices} (\matricial{\identity} - \relaxationmatrix)\right ) = \relaxationmatrix + \spacestep \left ( \frac{1}{\latticevelocity} \partial_t \matricial{\identity} + \duboisoperatormatrix (\matricial{\identity} - \relaxationmatrix)\right ) + \frac{\spacestep^2}{2} \left ( \frac{1}{\latticevelocity^2}\partial_{tt} \matricial{\identity} - \duboisoperatormatrix^2 (\matricial{\identity} - \relaxationmatrix)\right ) + \bigO{\spacestep^3}.
  \end{equation}
\end{lemma}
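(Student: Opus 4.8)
The plan is to obtain \Cref{eq:ExpansionResolvent} as a term-by-term sum of two pieces: the expansion of the time shift operator $\timeshiftoperator \matricial{\identity}$, which is the only genuinely new computation, and the expansion of $\schememoments$, which is already supplied by \Cref{lemma:LinkBetweenWeandDubois}.

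First I would establish the asymptotic equivalent of the time shift operator alone. By definition $(\timeshiftoperator \genericfunction)(\timevariable) = \genericfunction(\timevariable + \timestep)$, and under the acoustic \Cref{ass:ScalingAssumptions} one has $\timestep = \spacestep / \latticevelocity$. A Taylor expansion of $\genericfunction(\timevariable + \timestep)$ in the time variable -- entirely parallel to \Cref{lemma:ExpansionShift} but carried out in time rather than in space -- then gives
\begin{equation*}
  \timeshiftoperator \asymtoticequivalence \asymptotictimeshiftoperator = \sum_{\perturbationorderindices \geq 0} \frac{\spacestep^{\perturbationorderindices}}{\latticevelocity^{\perturbationorderindices} \perturbationorderindices !} \partial_t^{\perturbationorderindices}, \qquad \text{so that} \qquad \termatorder{\asymptotictimeshiftoperator}{\perturbationorderindices} = \frac{1}{\latticevelocity^{\perturbationorderindices} \perturbationorderindices !} \partial_t^{\perturbationorderindices}.
\end{equation*}
Promoting this to $\timeshiftoperator \matricial{\identity} \asymtoticequivalence \asymptotictimeshiftoperator \matricial{\identity}$ is immediate, since asymptotic equivalence is understood entry-wise.

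Next I would simply add the two expansions. The notion of asymptotic equivalence from \Cref{def:TimeSpaceDifferentialOperators} is nothing but a Taylor expansion and is therefore $\reals$-linear: if $\generictimespacediscreteop_1 \asymtoticequivalence \generictaylorseries_1$ and $\generictimespacediscreteop_2 \asymtoticequivalence \generictaylorseries_2$, then $\generictimespacediscreteop_1 - \generictimespacediscreteop_2 \asymtoticequivalence \generictaylorseries_1 - \generictaylorseries_2$, and the order-$\perturbationorderindices$ coefficient of the difference is the difference of the coefficients. Applying this to $\timeshiftoperator \matricial{\identity} - \schememoments$, and using $\timeshiftoperator \matricial{\identity} \asymtoticequivalence \asymptotictimeshiftoperator \matricial{\identity}$ together with $\schememoments \asymtoticequivalence \asymptoticschememoments$ and the value of $\termatorder{\asymptoticschememoments}{\perturbationorderindices}$ given by \Cref{lemma:LinkBetweenWeandDubois}, the order-$\perturbationorderindices$ term of $\asymptotictimeshiftoperator \matricial{\identity} - \asymptoticschememoments$ becomes
\begin{equation*}
  \termatorder{\asymptotictimeshiftoperator}{\perturbationorderindices} \matricial{\identity} - \termatorder{\asymptoticschememoments}{\perturbationorderindices} = \frac{1}{\latticevelocity^{\perturbationorderindices} \perturbationorderindices !} \left ( \partial_t^{\perturbationorderindices} \matricial{\identity} - (-1)^{\perturbationorderindices} \duboisoperatormatrix^{\perturbationorderindices} (\matricial{\identity} - \relaxationmatrix) \right ),
\end{equation*}
which is exactly the summand in \Cref{eq:ExpansionResolvent}.

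Finally I would read off the first orders to recover the explicit right-hand side: the order $\perturbationorderindices = 0$ term is $\matricial{\identity} - (\matricial{\identity} - \relaxationmatrix) = \relaxationmatrix$, the order $\perturbationorderindices = 1$ term is $\frac{\spacestep}{\latticevelocity}(\partial_t \matricial{\identity} + \duboisoperatormatrix(\matricial{\identity} - \relaxationmatrix))$, and the order $\perturbationorderindices = 2$ term is $\frac{\spacestep^2}{2\latticevelocity^2}(\partial_{tt}\matricial{\identity} - \duboisoperatormatrix^2(\matricial{\identity} - \relaxationmatrix))$, leaving a remainder $\bigO{\spacestep^3}$. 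There is essentially no genuine obstacle here: the content reduces to a one-variable Taylor expansion plus \Cref{lemma:LinkBetweenWeandDubois}, and the only point deserving a word of justification is that the expansions of the two summands may be combined linearly. This holds because $\timeshiftoperator$ acts on the time variable while the entries of $\schememoments$ act on the space variables, so the two commute and their asymptotic equivalences add entry-wise without interference.
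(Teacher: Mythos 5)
Your proposal is correct and follows exactly the route of the paper's own (one-line) proof: a Taylor expansion of $\timeshiftoperator$ in time under the acoustic scaling $\timestep = \spacestep/\latticevelocity$, combined linearly with the expansion of $\schememoments$ already provided by \Cref{lemma:LinkBetweenWeandDubois}. The only difference is that you spell out the details the paper leaves implicit; the signs and the first three orders all check out.
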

\begin{proof}
  The standard Taylor expansion of $\timeshiftoperator$, using the assumption on the acoustic scaling, gives the claim.
\end{proof}

The consistency analysis of the \fd schemes from \Cref{prop:ReductionFiniteDifferenceNGeq1Old} bis could be carried on infinite formal power series of differential operators $\taylorseries$ on the formulation
  \begin{equation}\label{eq:DevelopedEquation}
    \determinant (\asymptotictimeshiftoperator \matricial{\identity} - \asymptoticschememoments_{\indiceconserved}) \momentletter_{\indiceconserved} =  \left ( \adjugate (\asymptotictimeshiftoperator \matricial{\identity} - \asymptoticschememoments_{\indiceconserved}) \asymptoticschememomentsother_{\indiceconserved} \vectorial{\momentletter} \right )_{\indiceconserved} +  \left ( \adjugate (\asymptotictimeshiftoperator \matricial{\identity} - \asymptoticschememoments_{\indiceconserved}) \asymptoticschemeequil \vectorial{\momentletter}^{\atequilibrium} \right )_{\indiceconserved},
  \end{equation}
  for each $\indiceconserved \in \integerinterval{1}{\consmomentsnumber}$, because the determinant and the adjugate perfectly make sense for any square matrix on a commutative ring, like $\taylorseries$.
  However, in order to prove \Cref{thm:AcousticScaling}, where formal power series are truncated at a certain order, we shall need \eqref{eq:ExpansionResolvent} from \Cref{lemma:ExpansionResolvent} as well as the Taylor expansions of the determinant and the adjugate matrix around a given matrix.
  Indeed, these are non-linear functions and thus mix different orders in the expansions $\asymptotictimeshiftoperator \matricial{\identity} - \asymptoticschememoments \in \matrixspace{\velocitynumber}{\taylorseries}$.
  Since the product of the relaxation parameters for the non-conserved moments is a quantity which shall frequently appear in the computations to come, we fix a special notation for it, namely setting $\productrelaxation \definitionequality \prod_{\indicemoments = 2}^{\indicemoments = \velocitynumber} \relaxparletter_{\indicemoments} \neq 0$.

  \subsection{Determinant}
  We start by studying the expansion of the determinant up to second-order in the perturbation.
  For this, we need to characterize its derivatives.
  The expansion can be carried at higher order by employing the very same strategy.

  \begin{lemma}[Derivatives and expansion of the determinant function]\label{lemma:DerivativesDeterminant}
    Let $\genericmatrix \in \lineargroup{\velocitynumber} {\genericcommutativering}$ and $\genericmatrixtwo, \genericmatrixthree \in \matrixspace{\velocitynumber} {\genericcommutativering}$, where $\genericcommutativering$ is a commutative ring.
    Then the determinant function 
    \begin{align*}
      \determinant \colon \matrixspace{\velocitynumber}{\genericcommutativering} &\to \genericcommutativering \\
      \genericmatrix &\mapsto \determinant (\genericmatrix),
    \end{align*}
    has the following derivatives.
    \begin{align}
      \differentialMatrixOperator{\genericmatrix}{\determinant (\genericmatrix)}{\genericmatrixtwo} &= \determinant (\genericmatrix) \trace(\genericmatrix^{-1} \genericmatrixtwo), \label{eq:JacobiFormula}\\
      \secondDifferentialMatrixOperator{\genericmatrix}{\determinant (\genericmatrix)}{\genericmatrixtwo}{\genericmatrixthree} &= \determinant (\genericmatrix) \left ( \trace(\genericmatrix^{-1} \genericmatrixthree) \trace(\genericmatrix^{-1} \genericmatrixtwo) -\trace (\genericmatrix^{-1} \genericmatrixthree \genericmatrix^{-1}\genericmatrixtwo) \right ), \label{eq:DeterminantSecondDerivative}
    \end{align}
    where $\trace (\cdot)$ indicates the trace, \emph{i.e.} the sum of the diagonal entries.
    \eqref{eq:JacobiFormula} is known as Jacobi formula.
    Moreover, the second-order Taylor expansion of the determinant function reads
  \begin{equation*}
    \determinant (\genericmatrix + \genericmatrixtwo) = \determinant (\genericmatrix) + \differentialMatrixOperator{\genericmatrix}{\determinant (\genericmatrix)}{\genericmatrixtwo}  + \frac{1}{2} \secondDifferentialMatrixOperator{\genericmatrix}{\determinant (\genericmatrix)}{\genericmatrixtwo}{\genericmatrixtwo} + \bigO{\lVert \genericmatrixtwo \rVert^{3}}, 
  \end{equation*}
  where the derivatives are given by \eqref{eq:JacobiFormula} and \eqref{eq:DeterminantSecondDerivative}.
  \end{lemma}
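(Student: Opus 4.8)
The plan is to treat the three claims in sequence, deriving the second from the first by a single differentiation and then reading off the Taylor expansion directly from the two derivative formulas. Since $\genericcommutativering$ is only assumed to be a commutative ring, I would first render the notion of derivative purely algebraic: for a polynomial map $F : \matrixspace{\velocitynumber}{\genericcommutativering} \to \genericcommutativering$, introduce a formal indeterminate $t$ and \emph{define} $\differentialMatrixOperator{\genericmatrix}{F(\genericmatrix)}{\genericmatrixtwo}$ and $\secondDifferentialMatrixOperator{\genericmatrix}{F(\genericmatrix)}{\genericmatrixtwo}{\genericmatrixtwo}$ as the coefficients of $t$ and of $2t^2$ in the expansion of $F(\genericmatrix + t\genericmatrixtwo) \in \polynomialring{\genericcommutativering}{t}$. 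With this reading, the usual product and chain rules become formal identities in $\polynomialring{\genericcommutativering}{t}$, valid over any commutative ring, and there is no need to invoke analytic limits.

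For Jacobi's formula \Cref{eq:JacobiFormula}, I would exploit the multilinearity of the determinant in its columns, equivalently the Laplace cofactor expansion: each partial derivative $\partial \determinant / \partial \genericmatrix_{ij}$ equals the corresponding cofactor, which is precisely the entry $\adjugate(\genericmatrix)_{ji}$. Contracting against $\genericmatrixtwo$ gives $\differentialMatrixOperator{\genericmatrix}{\determinant(\genericmatrix)}{\genericmatrixtwo} = \trace(\adjugate(\genericmatrix)\genericmatrixtwo)$, and since $\genericmatrix \in \lineargroup{\velocitynumber}{\genericcommutativering}$ is invertible, the fundamental relation \Cref{eq:FundamentalRelationAdjugate} yields $\adjugate(\genericmatrix) = \determinant(\genericmatrix)\genericmatrix^{-1}$, whence $\differentialMatrixOperator{\genericmatrix}{\determinant(\genericmatrix)}{\genericmatrixtwo} = \determinant(\genericmatrix)\trace(\genericmatrix^{-1}\genericmatrixtwo)$.

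The second derivative \Cref{eq:DeterminantSecondDerivative} I would then obtain by differentiating the right-hand side of Jacobi's formula in the direction $\genericmatrixthree$, applying the product rule to the factor $\determinant(\genericmatrix)$ and the scalar $\trace(\genericmatrix^{-1}\genericmatrixtwo)$. The first factor contributes $\determinant(\genericmatrix)\trace(\genericmatrix^{-1}\genericmatrixthree)\trace(\genericmatrix^{-1}\genericmatrixtwo)$ by Jacobi once more. For the second factor I need the derivative of the inverse, $\differentialMatrixOperator{\genericmatrix}{\genericmatrix^{-1}}{\genericmatrixthree} = -\genericmatrix^{-1}\genericmatrixthree\genericmatrix^{-1}$, which follows by differentiating the identity $\genericmatrix\genericmatrix^{-1} = \identity$ and solving; this produces $-\determinant(\genericmatrix)\trace(\genericmatrix^{-1}\genericmatrixthree\genericmatrix^{-1}\genericmatrixtwo)$. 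Summing the two contributions gives exactly \Cref{eq:DeterminantSecondDerivative}.

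Finally, the Taylor expansion is immediate once the derivatives are known: the determinant is a polynomial of degree $\velocitynumber$ in the entries of its argument, so the formal expansion of $\determinant(\genericmatrix + \genericmatrixtwo)$ in powers of the entries of $\genericmatrixtwo$ terminates after finitely many terms, its degree-zero, degree-one and degree-two homogeneous parts being $\determinant(\genericmatrix)$, $\differentialMatrixOperator{\genericmatrix}{\determinant(\genericmatrix)}{\genericmatrixtwo}$ and $\frac{1}{2}\secondDifferentialMatrixOperator{\genericmatrix}{\determinant(\genericmatrix)}{\genericmatrixtwo}{\genericmatrixtwo}$ respectively, the remaining terms being collected in $\bigO{\lVert \genericmatrixtwo \rVert^{3}}$; the factor $1/2$ is harmless since in the intended application $\genericcommutativering = \taylorseries$ is an $\reals$-algebra. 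As a cross-check one may instead write $\determinant(\genericmatrix + \genericmatrixtwo) = \determinant(\genericmatrix)\determinant(\identity + \genericmatrix^{-1}\genericmatrixtwo)$ and expand the elementary symmetric functions of $\genericmatrix^{-1}\genericmatrixtwo$ via Newton's identities, recovering the same two leading terms. The only genuine subtlety, and hence the point I expect to require the most care, is the commutative-ring formalism: I would justify the cofactor identity for $\partial \determinant/\partial \genericmatrix_{ij}$ and the inverse-derivative formula as bona fide algebraic identities in $\polynomialring{\genericcommutativering}{t}$ rather than as limits, after which every step above is a purely formal manipulation valid over any commutative ring.
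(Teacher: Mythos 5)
Your proposal is correct and follows essentially the same route as the paper: the paper likewise cites the Jacobi formula as standard and obtains \Cref{eq:DeterminantSecondDerivative} by differentiating it with the product rule, the linearity of the trace, and the identity $\differentialMatrixOperator{\genericmatrix}{\genericmatrix^{-1}}{\genericmatrixtwo} = -\genericmatrix^{-1}\genericmatrixtwo\genericmatrix^{-1}$. The additional material you supply (the cofactor derivation of Jacobi, the formal $\polynomialring{\genericcommutativering}{t}$ reading of the derivatives, and the polynomial justification of the truncated expansion) merely fills in steps the paper delegates to references, and is sound.
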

  \begin{proof}
    The Jacobi formula \eqref{eq:JacobiFormula} is a standard result, see Chapter 0 in \cite{horn2012matrix} or Chapter 5 in \cite{zwillinger2018crc}.
    Let us prove \eqref{eq:DeterminantSecondDerivative}.
    \begin{align*}
      \secondDifferentialMatrixOperator{\genericmatrix}{\determinant (\genericmatrix)}{\genericmatrixtwo}{\genericmatrixthree} &\definitionequality \differentialMatrixOperator{\genericmatrix}{\differentialMatrixOperator{\genericmatrix}{\determinant (\genericmatrix)}{\genericmatrixtwo}}{\genericmatrixthree} = \differentialMatrixOperator{\genericmatrix}{\determinant (\genericmatrix) \trace(\genericmatrix^{-1} \genericmatrixtwo)}{\genericmatrixthree}, \\
      &= \differentialMatrixOperator{\genericmatrix}{\determinant (\genericmatrix) }{\genericmatrixthree} \trace(\genericmatrix^{-1} \genericmatrixtwo) + \determinant (\genericmatrix) \differentialMatrixOperator{\genericmatrix}{\trace(\genericmatrix^{-1} \genericmatrixtwo)}{\genericmatrixthree}, \\
      &= \determinant (\genericmatrix) \trace(\genericmatrix^{-1} \genericmatrixthree) \trace(\genericmatrix^{-1} \genericmatrixtwo) + \determinant (\genericmatrix) \trace(\differentialMatrixOperator{\genericmatrix}{\genericmatrix^{-1} \genericmatrixtwo}{\genericmatrixthree}), \\
      &= \determinant (\genericmatrix) \trace(\genericmatrix^{-1} \genericmatrixthree) \trace(\genericmatrix^{-1} \genericmatrixtwo) - \determinant (\genericmatrix) \trace(\genericmatrix^{-1} \genericmatrixthree \genericmatrix^{-1} \genericmatrixtwo),
    \end{align*}
    where we have used, in this order, the product rule for derivatives, the Jacobi formula \eqref{eq:JacobiFormula}, the linearity of the trace and the fact that $\differentialMatrixOperator{\genericmatrix}{\genericmatrix^{-1}}{\genericmatrixtwo} = -\genericmatrix^{-1} \genericmatrixtwo \genericmatrix^{-1}$, see Chapter 5 in \cite{zwillinger2018crc}.
  \end{proof}
  \begin{remark}[On the invertibility assumption]\label{rem:Invertibility}
    There exists a form of the Jacobi formula \eqref{eq:JacobiFormula} for general $\genericmatrix \in \matrixspace{\velocitynumber}{\genericcommutativering}$ without assuming invertibility, under the form $\differentialMatrixOperator{\genericmatrix}{\determinant (\genericmatrix)}{\genericmatrixtwo} = \trace (\adjugate (\genericmatrix) \genericmatrixtwo)$. This is equivalent to \eqref{eq:JacobiFormula}, since \eqref{eq:FundamentalRelationAdjugate} holds.
    Nevertheless, we decided to state \Cref{lemma:DerivativesDeterminant} using the invertibility assumption. 
    This is done, as we shall see, without loss of generality by taking advantage of some invertible approximation of real matrices and allows to easily find the formul\ae~for higher order derivatives and expansions \emph{via} basic differential calculus, as illustrated in the previous proof.
  \end{remark}
In the sequel, we shall take $\genericcommutativering = \taylorseries$ and $\genericmatrix = \relaxationmatrix \in \lineargroup{\velocitynumber}{\reals} \subset \lineargroup{\velocitynumber}{\taylorseries}$ and $\genericmatrixtwo = \bigO{\spacestep} \in \matrixspace{\velocitynumber}{\taylorseries}$.
To simplify the computations and relying on the findings of \Cref{sec:InvarianceChoiceRelaxationParamtersConserved}, we can consider $\relaxationmatrix$ singular by having $\relaxparletter_{1} = 0$.
To avoid the difficulties linked with singular matrices, in the spirit of \Cref{rem:Invertibility}, we take advantage of the fact that the derivatives of the determinant (and the determinant itself) around $\genericmatrix$ are smooth (indeed, polynomial) functions of $\genericmatrix$.
Thus, we introduce the non-singular approximation $\relaxationmatrix$ where $\relaxparletter_1 \neq 0$, which is such that $\relaxationmatrix \to \relaxationmatrix|_{\relaxparletter_1 = 0}$ as $\relaxparletter_1  \to 0$ for any matricial topology.

We are now ready to use the expansion given by \Cref{lemma:ExpansionResolvent} into the terms stemming from \Cref{lemma:DerivativesDeterminant} to find the leading order terms of the left hand side of \eqref{eq:FDSchemeAdjoint}, namely of $\determinant (\asymptotictimeshiftoperator \matricial{\identity} - \asymptoticschememoments) \in \taylorseries$.
This is nothing but computing the Taylor series of composite functions (see the Fa\`a di Bruno's formul\ae \cite{johnson2002curious}) or the composition of formal series
\begin{align*}
    \determinant (\asymptotictimeshiftoperator \matricial{\identity} - \asymptoticschememoments) = \determinant (\relaxationmatrix) &+ \spacestep \differentialMatrixOperator{\relaxationmatrix}{\determinant (\relaxationmatrix)}{\termatorderparenthesis{\asymptotictimeshiftoperator \matricial{\identity} - \asymptoticschememoments}{1}} \\
    &+\spacestep^2 \bigl (\differentialMatrixOperator{\relaxationmatrix}{\determinant (\relaxationmatrix)}{\termatorderparenthesis{\asymptotictimeshiftoperator \matricial{\identity} - \asymptoticschememoments}{2}} + \frac{1}{2}\secondDifferentialMatrixOperator{\relaxationmatrix}{\determinant (\relaxationmatrix)}{\termatorderparenthesis{\asymptotictimeshiftoperator \matricial{\identity} - \asymptoticschememoments}{1}}{\termatorderparenthesis{\asymptotictimeshiftoperator \matricial{\identity} - \asymptoticschememoments}{1}} \bigr ) + \bigO{\spacestep^3}.
\end{align*}

\begin{itemize}
  \item One clearly has $\determinant (\relaxationmatrix) = \relaxparletter_1 \productrelaxation$, because the matrix $\relaxationmatrix$ is diagonal.
  Thus, the Taylor expansion of $\determinant (\asymptotictimeshiftoperator \matricial{\identity} - \asymptoticschememoments)$ does not contain zero-order terms if $\relaxparletter_1 = 0$.
  \item Let $\genericmatrix = \relaxationmatrix \in \lineargroup{\velocitynumber}{\reals} \subset \lineargroup{\velocitynumber}{\taylorseries}$ and $\genericmatrixtwo = \spacestep \left ( \frac{1}{\latticevelocity} \partial_t \matricial{\identity} + \duboisoperatormatrix (\matricial{\identity} - \relaxationmatrix)\right ) + \frac{\spacestep^2}{2} \left ( \frac{1}{\latticevelocity^2} \partial_{tt} \matricial{\identity} - \duboisoperatormatrix^2 (\matricial{\identity} - \relaxationmatrix)\right ) + \bigO{\spacestep^3} \in \matrixspace{\velocitynumber}{\taylorseries}$ from \Cref{lemma:ExpansionResolvent}. Using \eqref{eq:JacobiFormula} from \Cref{lemma:DerivativesDeterminant} and performing elementary computations, we have
  \begin{align}
    &\differentialMatrixOperator{\genericmatrix}{\determinant (\genericmatrix)}{\genericmatrixtwo} = \spacestep \productrelaxation \Biggl ( \frac{1}{\latticevelocity}  \partial_t  + (1-\relaxparletter_1) \duboisoperatormatrixentry_{11} + \relaxparletter_1 \sum_{\indiceslines = 2}^{\velocitynumber} \frac{1}{\relaxparletter_{\indiceslines}} \Bigr ( \frac{1}{\latticevelocity }\partial_t + (1 - \relaxparletter_{\indiceslines}) \duboisoperatormatrixentry_{\indiceslines \indiceslines}\Bigr ) \Biggr )  \label{eq:ExpansionDerivativeDeterminant}  \\
    &+\frac{\spacestep^2}{2} \productrelaxation \Biggl ( \frac{1}{\latticevelocity^2} \partial_{tt} - (1-\relaxparletter_1) \duboisoperatormatrixentry_{11}\duboisoperatormatrixentry_{11} - (1-\relaxparletter_1) \sum_{\ell = 2}^{\velocitynumber} \duboisoperatormatrixentry_{1\ell} \duboisoperatormatrixentry_{\ell 1} + \relaxparletter_1 \sum_{\indiceslines = 2}^{\velocitynumber} \frac{1}{\relaxparletter_{\indiceslines}} \Biggl ( \frac{1}{\latticevelocity^2}\partial_{tt} - (1 - \relaxparletter_{\indiceslines}) \sum_{\ell = 1}^{\velocitynumber} \duboisoperatormatrixentry_{\indiceslines \ell}\duboisoperatormatrixentry_{\ell \indiceslines} \Biggr ) \Biggr ) + \bigO{\spacestep^3}.\nonumber
  \end{align}
  We keep this expression without taking the limit in $\relaxparletter_1$, for future use.
  Taking the limit for $\relaxparletter_1 \to 0$ yields the derivative around the singular matrix $\relaxationmatrix|_{\relaxparletter_1 = 0}$ instead of $\relaxationmatrix \in \lineargroup{\velocitynumber}{\reals}$ for $\relaxparletter_1 \neq 0$.
  \begin{equation}\label{eq:FirstDerivativeDeterminantOnResolvent}
    \lim_{\relaxparletter_1 \to 0} \differentialMatrixOperator{\genericmatrix}{\determinant (\genericmatrix)}{\genericmatrixtwo} =  \spacestep \productrelaxation \Bigl ( \frac{1}{\latticevelocity }\partial_t  + \duboisoperatormatrixentry_{11}\Bigr ) + \frac{\spacestep^2}{2} \productrelaxation \Biggl ( \frac{1}{\latticevelocity^2} \partial_{tt} - \duboisoperatormatrixentry_{11}\duboisoperatormatrixentry_{11} - \sum_{\ell = 2}^{\velocitynumber} \duboisoperatormatrixentry_{1\ell} \duboisoperatormatrixentry_{\ell 1} \Biggr ) + \bigO{\spacestep^3}.
  \end{equation}
  This gives all the first-order term and part of the second-order term in the series  $\determinant (\asymptotictimeshiftoperator \matricial{\identity} - \asymptoticschememoments)$.
  \item Let $\genericmatrix = \relaxationmatrix  \in \lineargroup{\velocitynumber}{\reals} \subset \lineargroup{\velocitynumber}{\taylorseries}$ and $\genericmatrixtwo = \spacestep \left ( \frac{1}{\latticevelocity} \partial_t \matricial{\identity} + \duboisoperatormatrix (\matricial{\identity} - \relaxationmatrix)\right ) + \bigO{\spacestep^2} \in \matrixspace{\velocitynumber}{\taylorseries}$ from \Cref{lemma:ExpansionResolvent}. Using \eqref{eq:DeterminantSecondDerivative} from \Cref{lemma:DerivativesDeterminant}, we have, after some algebra
  \begin{align}
    \secondDifferentialMatrixOperator{\genericmatrix}{\determinant (\genericmatrix)}{\genericmatrixtwo}{\genericmatrixtwo} = \spacestep^2  \productrelaxation \Biggl ( &2 \Bigl ( \frac{1}{\latticevelocity}\partial_t + (1-\relaxparletter_1) \duboisoperatormatrixentry_{11} \Bigr ) \sum_{\indiceslines = 2}^{\velocitynumber} \frac{1}{\relaxparletter_{\indiceslines}} \Bigl (\frac{1}{\latticevelocity} \partial_t + (1 - \relaxparletter_{\indiceslines} ) \duboisoperatormatrixentry_{\indiceslines \indiceslines} \Bigr ) + \relaxparletter_{1} \Biggl ( \sum_{\indiceslines = 2}^{\velocitynumber} \frac{1}{\relaxparletter_{\indiceslines}} \Bigl ( \frac{1}{\latticevelocity}\partial_t + (1 - \relaxparletter_{\indiceslines}  ) \duboisoperatormatrixentry_{\indiceslines \indiceslines} \Bigr )\Biggr )^2 \nonumber \\
    -&2  ( 1 - \relaxparletter_1  ) \sum_{\ell = 2}^{\velocitynumber} \left (\frac{1}{\relaxparletter_{\ell}} - 1 \right ) \duboisoperatormatrixentry_{1\ell} \duboisoperatormatrixentry_{\ell 1} - \relaxparletter_{1} \sum_{\indiceslines = 2}^{\velocitynumber} \frac{1}{\relaxparletter_{\indiceslines}^2} \Bigl ( \frac{1}{\latticevelocity}\partial_t + (1 - \relaxparletter_{\indiceslines}) \duboisoperatormatrixentry_{\indiceslines \indiceslines} \Bigr )^2 \nonumber \\
    - &\relaxparletter_1 \sum_{\indiceslines = 2}^{\velocitynumber} \sum_{\substack{\ell = 2 \\ \ell \neq \indiceslines}}^{\velocitynumber} \left (\frac{1}{\relaxparletter_{\indiceslines}} - 1 \right ) \left (\frac{1}{\relaxparletter_{\ell}} - 1 \right ) \duboisoperatormatrixentry_{\indiceslines \ell} \duboisoperatormatrixentry_{ \ell \indiceslines} \Biggr ) + \bigO{\spacestep^3}.  \label{eq:ExpansionSecondDerivativeDeterminant}
  \end{align}
  Once more, we take the limit for $\relaxparletter_1 \to 0$ in order to find the desired result on the remaining second-order terms in the development $\determinant (\asymptotictimeshiftoperator \matricial{\identity} - \asymptoticschememoments)$
  \begin{align}
    \lim_{\relaxparletter_1 \to 0} \secondDifferentialMatrixOperator{\genericmatrix}{\determinant (\genericmatrix)}{\genericmatrixtwo}{\genericmatrixtwo} = 2 \spacestep^2 \productrelaxation \Biggl (& \frac{1}{\latticevelocity^2}\partial_{tt} \sum_{\ell = 2}^{\velocitynumber} \frac{1}{\relaxparletter_{\ell}} + \frac{1}{\latticevelocity}  \duboisoperatormatrixentry_{11} \partial_{t} \sum_{\ell = 2}^{\velocitynumber} \frac{1}{\relaxparletter_{\ell}} + \frac{1}{\latticevelocity}\partial_t \sum_{\indiceslines = 2}^{\velocitynumber} \Biggl ( \frac{1}{\relaxparletter_{\indiceslines}} - 1 \Biggr ) \duboisoperatormatrixentry_{\indiceslines \indiceslines}  \nonumber\\
    &+ \duboisoperatormatrixentry_{11} \sum_{\indiceslines = 2}^{\velocitynumber} \left ( \frac{1}{\relaxparletter_{\indiceslines}} - 1 \right ) \duboisoperatormatrixentry_{\indiceslines \indiceslines}
    - \sum_{\ell = 2}^{\velocitynumber} \Biggl (\frac{1}{\relaxparletter_{\ell}} - 1 \Biggr )\duboisoperatormatrixentry_{1\ell} \duboisoperatormatrixentry_{\ell 1} \Biggr ) + \bigO{\spacestep^3}.   \label{eq:SecondDerivativeDeterminantOnResolvent} 
  \end{align}
\end{itemize}
Putting \eqref{eq:FirstDerivativeDeterminantOnResolvent} and \eqref{eq:SecondDerivativeDeterminantOnResolvent} together in \Cref{lemma:DerivativesDeterminant}, with expansion around $\relaxationmatrix$, allows to write $\determinant (\asymptotictimeshiftoperator \matricial{\identity} - \asymptoticschememoments)$ up to third order.
This is
\begin{align}
  \lim_{\relaxparletter_1 \to 0} \determinant (\asymptotictimeshiftoperator \matricial{\identity} - \asymptoticschememoments) = \spacestep \productrelaxation \Bigl ( \frac{1}{\latticevelocity }\partial_t  + \duboisoperatormatrixentry_{11}\Bigr ) 
  +\spacestep^2 \productrelaxation \Biggl (& \frac{1}{\latticevelocity^2} \Biggl (\frac{1}{2} + \sum_{\ell = 2}^{\velocitynumber} \frac{1}{\relaxparletter_{\ell}} \Biggr ) \partial_{tt}+ \frac{1}{\latticevelocity}  \duboisoperatormatrixentry_{11} \partial_{t} \sum_{\ell = 2}^{\velocitynumber} \frac{1}{\relaxparletter_{\ell}} + \frac{1}{\latticevelocity}\partial_t \sum_{\indiceslines = 2}^{\velocitynumber} \Biggl ( \frac{1}{\relaxparletter_{\indiceslines}} - 1 \Biggr ) \duboisoperatormatrixentry_{\indiceslines \indiceslines}  \label{eq:DeterminantExpansionOnResolvent}\\
    &- \frac{1}{2} \duboisoperatormatrixentry_{11}\duboisoperatormatrixentry_{11} 
    - \sum_{\ell = 2}^{\velocitynumber} \Biggl (\frac{1}{\relaxparletter_{\ell}} - \frac{1}{2} \Biggr )\duboisoperatormatrixentry_{1\ell} \duboisoperatormatrixentry_{\ell 1}  + \duboisoperatormatrixentry_{11} \sum_{\indiceslines = 2}^{\velocitynumber} \left ( \frac{1}{\relaxparletter_{\indiceslines}} - 1 \right ) \duboisoperatormatrixentry_{\indiceslines \indiceslines} \Biggr ) + \bigO{\spacestep^3}. \nonumber
\end{align}

\subsection{Adjugate}
We now switch to the formal power series of the adjugate function of the inverse of the resolvent, in order to deal with the right hand side of the corresponding \fd scheme given by \eqref{eq:FDSchemeAdjoint}.
Let us start by characterizing its derivatives.
\begin{lemma}[Derivatives and expansion of the adjugate function]\label{lemma:DerivativesAjugate}
  Let $\genericmatrix \in \lineargroup{\velocitynumber}{\genericcommutativering}$ and $\genericmatrixtwo, \genericmatrixthree \in \matrixspace{\velocitynumber}{\genericcommutativering}$, where $\genericcommutativering$ is a commutative ring.
  Then the adjugate function
  \begin{align*}
    \adjugate \colon \matrixspace{\velocitynumber}{\genericcommutativering} &\to \matrixspace{\velocitynumber}{\genericcommutativering}  \\
    \genericmatrix &\mapsto \adjugate (\genericmatrix),
  \end{align*}
  has the following derivatives.
  \begin{align}
    \differentialMatrixOperator{\genericmatrix}{\adjugate (\genericmatrix)}{\genericmatrixtwo} &= \determinant (\genericmatrix) \left ( \trace(\genericmatrix^{-1} \genericmatrixtwo) \matricial{\identity} - \genericmatrix^{-1}\genericmatrixtwo \right ) \genericmatrix^{-1}, \label{eq:DerivativeAjugate}\\
    \secondDifferentialMatrixOperator{\genericmatrix}{\adjugate (\genericmatrix)}{\genericmatrixtwo}{\genericmatrixthree} = \determinant (\genericmatrix) \Bigl ( &\left ( \trace(\genericmatrix^{-1} \genericmatrixthree) \trace(\genericmatrix^{-1} \genericmatrixtwo) -\trace (\genericmatrix^{-1} \genericmatrixthree \genericmatrix^{-1}\genericmatrixtwo) \right ) \genericmatrix^{-1} \nonumber \\
    &+\genericmatrix^{-1} \left (\genericmatrixthree \genericmatrix^{-1}\genericmatrixtwo + \genericmatrixtwo \genericmatrix^{-1}\genericmatrixthree - \trace(\genericmatrix^{-1} \genericmatrixthree)\genericmatrixtwo - \trace(\genericmatrix^{-1} \genericmatrixtwo)\genericmatrixthree  \right )\genericmatrix^{-1} \Bigr ). \label{eq:secondDerivativeAjugate}
  \end{align}
  Moreover, the second-order Taylor expansion of the adjugate function reads
  \begin{equation*}
    \adjugate (\genericmatrix + \genericmatrixtwo) = \adjugate (\genericmatrix) + \differentialMatrixOperator{\genericmatrix}{\adjugate (\genericmatrix)}{\genericmatrixtwo}  + \frac{1}{2} \secondDifferentialMatrixOperator{\genericmatrix}{\adjugate (\genericmatrix)}{\genericmatrixtwo}{\genericmatrixtwo} + \bigO{\lVert \genericmatrixtwo \rVert^{3}}, 
  \end{equation*}
  where the derivatives are given by \eqref{eq:DerivativeAjugate} and \eqref{eq:secondDerivativeAjugate}.
\end{lemma}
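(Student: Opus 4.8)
The plan is to reduce everything to the fundamental identity $\adjugate(\genericmatrix) = \determinant(\genericmatrix)\,\genericmatrix^{-1}$, which holds whenever $\genericmatrix$ is invertible as a direct consequence of \Cref{eq:FundamentalRelationAdjugate}. Since the hypothesis is precisely $\genericmatrix \in \lineargroup{\velocitynumber}{\genericcommutativering}$, this factorization expresses $\adjugate$ as a product of two matrix functions whose differential calculus is already available: the determinant, whose first and second derivatives are given by \Cref{eq:JacobiFormula} and \Cref{eq:DeterminantSecondDerivative} in \Cref{lemma:DerivativesDeterminant}, and the matrix inverse, whose derivative is the standard identity $\differentialMatrixOperator{\genericmatrix}{\genericmatrix^{-1}}{\genericmatrixtwo} = -\genericmatrix^{-1}\genericmatrixtwo\genericmatrix^{-1}$ already invoked in the proof of \Cref{lemma:DerivativesDeterminant}. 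The entire task thus becomes a repeated application of the product rule, exactly mirroring the structure of that earlier proof.

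First I would establish the first derivative \Cref{eq:DerivativeAjugate}. Differentiating $\adjugate(\genericmatrix) = \determinant(\genericmatrix)\,\genericmatrix^{-1}$ in the direction $\genericmatrixtwo$ and using the product rule, the determinant factor contributes $\determinant(\genericmatrix)\trace(\genericmatrix^{-1}\genericmatrixtwo)\,\genericmatrix^{-1}$ via the Jacobi formula, while the inverse factor contributes $-\determinant(\genericmatrix)\,\genericmatrix^{-1}\genericmatrixtwo\genericmatrix^{-1}$. Factoring $\determinant(\genericmatrix)$ out and $\genericmatrix^{-1}$ to the right yields $\determinant(\genericmatrix)\bigl(\trace(\genericmatrix^{-1}\genericmatrixtwo)\matricial{\identity} - \genericmatrix^{-1}\genericmatrixtwo\bigr)\genericmatrix^{-1}$, which is exactly \Cref{eq:DerivativeAjugate}.

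The second derivative \Cref{eq:secondDerivativeAjugate} then follows by differentiating this first-derivative expression once more, now in the direction $\genericmatrixthree$. Here lies the main obstacle, which is purely organizational rather than conceptual: the first derivative is a sum of terms each built from several non-commuting factors ($\determinant(\genericmatrix)$, a scalar trace, and two copies of $\genericmatrix^{-1}$ interlaced with $\genericmatrixtwo$), so the product rule proliferates into many pieces. I would handle this by differentiating one factor at a time, systematically reusing three facts: the Jacobi formula for $\differentialMatrixOperator{\genericmatrix}{\determinant(\genericmatrix)}{\genericmatrixthree}$, the inverse-derivative identity, and the consequent identity $\differentialMatrixOperator{\genericmatrix}{\trace(\genericmatrix^{-1}\genericmatrixtwo)}{\genericmatrixthree} = -\trace(\genericmatrix^{-1}\genericmatrixthree\genericmatrix^{-1}\genericmatrixtwo)$, which comes from the linearity of the trace. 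Collecting terms and factoring $\determinant(\genericmatrix)$ reproduces \Cref{eq:secondDerivativeAjugate}; a convenient sanity check is that the scalar coefficient of $\genericmatrix^{-1}$ in the result coincides with the bracket of \Cref{eq:DeterminantSecondDerivative}, as it must, since that scalar is precisely $\secondDifferentialMatrixOperator{\genericmatrix}{\determinant(\genericmatrix)}{\genericmatrixtwo}{\genericmatrixthree}$ emerging from the leading factor.

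Finally, the second-order Taylor expansion is immediate. As $\adjugate$ is a polynomial function of the entries of $\genericmatrix$, hence smooth, the standard finite Taylor formula applies and substituting the two derivatives just computed (with $\genericmatrixthree = \genericmatrixtwo$ in the second one) gives the stated expansion with remainder $\bigO{\lVert\genericmatrixtwo\rVert^{3}}$. As emphasized in \Cref{rem:Invertibility}, the invertibility hypothesis is no restriction for the later use of this lemma, since the derivatives are polynomial in $\genericmatrix$ and one may pass to any singular limit of $\genericmatrix$ afterwards.
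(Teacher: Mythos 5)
Your proposal is correct and follows essentially the same route as the paper: factor $\adjugate(\genericmatrix)=\determinant(\genericmatrix)\,\genericmatrix^{-1}$ using \Cref{eq:FundamentalRelationAdjugate} and invertibility, then differentiate twice via the product rule, the Jacobi formula, and $\differentialMatrixOperator{\genericmatrix}{\genericmatrix^{-1}}{\genericmatrixtwo}=-\genericmatrix^{-1}\genericmatrixtwo\genericmatrix^{-1}$. Your observation that the scalar multiple of $\genericmatrix^{-1}$ in the second derivative reproduces the bracket of \Cref{eq:DeterminantSecondDerivative} is exactly the structure the paper records in \Cref{rem:Recycle}.
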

\begin{proof}
  Since \eqref{eq:FundamentalRelationAdjugate} holds and $\genericmatrix$ is invertible, we have that $\adjugate (\genericmatrix) = \determinant(\genericmatrix) \genericmatrix^{-1}$.
  Therefore
  \begin{align*}
    \differentialMatrixOperator{\genericmatrix}{\adjugate (\genericmatrix)}{\genericmatrixtwo} &= \differentialMatrixOperator{\genericmatrix}{\determinant(\genericmatrix) \genericmatrix^{-1}}{\genericmatrixtwo} =  \differentialMatrixOperator{\genericmatrix}{\determinant(\genericmatrix) }{\genericmatrixtwo} \genericmatrix^{-1} + \determinant(\genericmatrix) \differentialMatrixOperator{\genericmatrix}{\genericmatrix^{-1}}{\genericmatrixtwo}, \\
    &= \determinant (\genericmatrix) \trace(\genericmatrix^{-1} \genericmatrixtwo)  \genericmatrix^{-1} - \determinant(\genericmatrix) \genericmatrix^{-1} \genericmatrixtwo \genericmatrix^{-1},
  \end{align*}
  where we have used the rule for the derivative of a product, the Jacobi formula \eqref{eq:JacobiFormula} and the identity $\differentialMatrixOperator{\genericmatrix}{\genericmatrix^{-1}}{\genericmatrixtwo} = -\genericmatrix^{-1} \genericmatrixtwo \genericmatrix^{-1}$.
  For the second derivative, we have 
  \begin{align*}
    \secondDifferentialMatrixOperator{\genericmatrix}{\adjugate (\genericmatrix)}{\genericmatrixtwo}{\genericmatrixthree} &\definitionequality \differentialMatrixOperator{\genericmatrix}{\differentialMatrixOperator{\genericmatrix}{\adjugate (\genericmatrix)}{\genericmatrixtwo}}{\genericmatrixthree} = \differentialMatrixOperator{\genericmatrix}{\determinant (\genericmatrix) \left ( \trace(\genericmatrix^{-1} \genericmatrixtwo) \matricial{\identity} - \genericmatrix^{-1}\genericmatrixtwo \right ) \genericmatrix^{-1}}{\genericmatrixthree}, \\
    &= \differentialMatrixOperator{\genericmatrix}{\determinant (\genericmatrix)}{\genericmatrixthree} \left ( \trace(\genericmatrix^{-1} \genericmatrixtwo) \matricial{\identity} - \genericmatrix^{-1}\genericmatrixtwo \right ) \genericmatrix^{-1} +  \determinant (\genericmatrix) \differentialMatrixOperator{\genericmatrix}{\left ( \trace(\genericmatrix^{-1} \genericmatrixtwo) \matricial{\identity} - \genericmatrix^{-1}\genericmatrixtwo \right ) \genericmatrix^{-1}}{\genericmatrixthree}, \\
    &=\determinant (\genericmatrix) \trace(\genericmatrix^{-1} \genericmatrixthree) \left ( \trace(\genericmatrix^{-1} \genericmatrixtwo) \matricial{\identity} - \genericmatrix^{-1}\genericmatrixtwo \right ) \genericmatrix^{-1} + \determinant (\genericmatrix) \differentialMatrixOperator{\genericmatrix}{\trace(\genericmatrix^{-1} \genericmatrixtwo) \matricial{\identity} - \genericmatrix^{-1}\genericmatrixtwo}{\genericmatrixthree} \genericmatrix^{-1}\\
    &+ \determinant (\genericmatrix) \left ( \trace(\genericmatrix^{-1} \genericmatrixtwo) \matricial{\identity} - \genericmatrix^{-1}\genericmatrixtwo \right ) \differentialMatrixOperator{\genericmatrix}{\genericmatrix^{-1}}{\genericmatrixthree}, \\
    &=\determinant (\genericmatrix) \trace(\genericmatrix^{-1} \genericmatrixthree) \left ( \trace(\genericmatrix^{-1} \genericmatrixtwo) \matricial{\identity} - \genericmatrix^{-1}\genericmatrixtwo \right ) \genericmatrix^{-1} \\
    &+ \determinant (\genericmatrix)  \left ( \trace \left ( \differentialMatrixOperator{\genericmatrix}{\genericmatrix^{-1}}{\genericmatrixthree} \genericmatrixtwo \right ) \matricial{\identity} - \differentialMatrixOperator{\genericmatrix}{\genericmatrix^{-1}}{\genericmatrixthree}\genericmatrixtwo \right ) \genericmatrix^{-1} \\
    &- \determinant (\genericmatrix) \left ( \trace(\genericmatrix^{-1} \genericmatrixtwo) \matricial{\identity} - \genericmatrix^{-1}\genericmatrixtwo \right ) \genericmatrix^{-1} \genericmatrixthree \genericmatrix^{-1}, \\
    &=\determinant (\genericmatrix) \trace(\genericmatrix^{-1} \genericmatrixthree) \left ( \trace(\genericmatrix^{-1} \genericmatrixtwo) \matricial{\identity} - \genericmatrix^{-1}\genericmatrixtwo \right ) \genericmatrix^{-1} - \determinant (\genericmatrix)  \left ( \trace \left ( \genericmatrix^{-1} \genericmatrixthree  \genericmatrix^{-1}  \genericmatrixtwo \right ) \matricial{\identity} - \genericmatrix^{-1} \genericmatrixthree \genericmatrix^{-1} \genericmatrixtwo \right ) \genericmatrix^{-1} \\
    &- \determinant (\genericmatrix) \left ( \trace(\genericmatrix^{-1} \genericmatrixtwo) \matricial{\identity} - \genericmatrix^{-1}\genericmatrixtwo \right ) \genericmatrix^{-1} \genericmatrixthree \genericmatrix^{-1},
  \end{align*}
  where we have used the rule for the derivative of a product, the Jacobi formula \eqref{eq:JacobiFormula}, the linearity of the derivative and the trace and the identity $\differentialMatrixOperator{\genericmatrix}{\genericmatrix^{-1}}{\genericmatrixtwo} = -\genericmatrix^{-1} \genericmatrixtwo \genericmatrix^{-1}$.
  Upon rearrangement, this yields the result.
\end{proof}
\begin{remark}\label{rem:Recycle}
  We observe that, looking at \eqref{eq:DerivativeAjugate} and \eqref{eq:secondDerivativeAjugate} compared to \eqref{eq:JacobiFormula} and \eqref{eq:DeterminantSecondDerivative}, we have that
  \begin{align*}
    \differentialMatrixOperator{\genericmatrix}{\adjugate (\genericmatrix)}{\genericmatrixtwo} &= \differentialMatrixOperator{\genericmatrix}{\determinant (\genericmatrix)}{\genericmatrixtwo} \genericmatrix^{-1} - \determinant (\genericmatrix) \genericmatrix^{-1}\genericmatrixtwo \genericmatrix^{-1}, \\
    \secondDifferentialMatrixOperator{\genericmatrix}{\adjugate (\genericmatrix)}{\genericmatrixtwo}{\genericmatrixthree} &=\secondDifferentialMatrixOperator{\genericmatrix}{\determinant (\genericmatrix)}{\genericmatrixtwo}{\genericmatrixthree} \genericmatrix^{-1} + \determinant (\genericmatrix) \genericmatrix^{-1} \left (\genericmatrixthree \genericmatrix^{-1}\genericmatrixtwo + \genericmatrixtwo \genericmatrix^{-1}\genericmatrixthree - \trace(\genericmatrix^{-1} \genericmatrixthree)\genericmatrixtwo - \trace(\genericmatrix^{-1} \genericmatrixtwo)\genericmatrixthree  \right )\genericmatrix^{-1}.
  \end{align*}
  This implies that we can reuse the computations we did for the determinant in the current treatment of the adjugate, as far as the first terms on the right hand sides are concerned. 
  However, one must be careful that now they are multiplied by $\genericmatrix^{-1}$.
\end{remark}

If we had stopped the developments at first order, we could have used the first-order perturbation theory of the adjugate matrix as provided by Theorem 2.1 from \cite{stewart1998adjugate}.
However, to the best of our knowledge, no second-order perturbation theory for this matrix is available in the literature, thus we have been compelled to independently develop it using differential calculus.
\Cref{lemma:DerivativesAjugate} is thus a generalization of the results from \cite{stewart1998adjugate} and can therefore be used -- beyond the application presented in this contribution -- by researchers needing a second-order perturbation theory for the adjugate matrix.

Since we are ultimately interested, as one can notice from \eqref{eq:FDSchemeAdjoint}, in multiplying the formal power series $\adjugate (\asymptotictimeshiftoperator \matricial{\identity} - \asymptoticschememoments) \in \matrixspace{\velocitynumber}{\taylorseries}$ by $\asymptoticschemeequil \in \matrixspace{\velocitynumber}{\taylorseries}$ in a Cauchy-like fashion (the standard product of formal power series) and select the first row, see \Cref{prop:ReductionFiniteDifferenceGeneralOld} bis, we perform the computations only for the first row of $\adjugate (\asymptotictimeshiftoperator \matricial{\identity} - \asymptoticschememoments)$.
\begin{itemize}
  \item Using the definition of the adjugate matrix in combination with the Laplace formula or using the explicit formula for the adjugate of an upper triangular matrix, see \cite{horn2012matrix}, we have 
  \begin{equation*}
    \adjugate(\relaxationmatrix) = \productrelaxation \diagmatrix \left (1, \frac{\relaxparletter_{1}}{\relaxparletter_2}, \dots, \frac{\relaxparletter_{1}}{\relaxparletter_{\velocitynumber}} \right ), \qquad \text{thus} \qquad \lim_{\relaxparletter_1 \to 0}  \adjugate(\relaxationmatrix) = \productrelaxation \canonicalbasisvector_1 \otimes \canonicalbasisvector_1.
  \end{equation*}
  Hence, contrarily to the determinant, the zero-order term in $\adjugate (\asymptotictimeshiftoperator \matricial{\identity} - \asymptoticschememoments)$ is not zero for $\relaxparletter_1 = 0$ but  a singular one-rank diagonal matrix. 
  \item Let $\genericmatrix = \relaxationmatrix \in \lineargroup{\velocitynumber}{\reals} \subset \lineargroup{\velocitynumber}{\taylorseries}$ and $\genericmatrixtwo = \spacestep \left (\frac{1}{\latticevelocity}\partial_t \matricial{\identity} + \duboisoperatormatrix (\matricial{\identity} - \relaxationmatrix)\right ) + \frac{\spacestep^2}{2} \left ( \frac{1}{\latticevelocity^2} \partial_{tt} \matricial{\identity} - \duboisoperatormatrix^2 (\matricial{\identity} - \relaxationmatrix)\right ) + \bigO{\spacestep^3} \in \matrixspace{\velocitynumber}{\taylorseries}$ from \Cref{lemma:ExpansionResolvent}.
  We utilize the previous computations from \eqref{eq:ExpansionDerivativeDeterminant}, as suggested in \Cref{rem:Recycle}, into \eqref{eq:DerivativeAjugate}.
  \begin{align*}
    &\differentialMatrixOperator{\genericmatrix}{\adjugate (\genericmatrix)}{\genericmatrixtwo} = \Biggl ( \spacestep \productrelaxation \Biggl ( \frac{1}{\latticevelocity}  \partial_t  + (1-\relaxparletter_1) \duboisoperatormatrixentry_{11} + \relaxparletter_1 \sum_{\indiceslines = 2}^{\velocitynumber} \frac{1}{\relaxparletter_{\indiceslines}} \Bigr ( \frac{1}{\latticevelocity }\partial_t + (1 - \relaxparletter_{\indiceslines}) \duboisoperatormatrixentry_{\indiceslines \indiceslines}\Bigr ) \Biggr ) \\
    &+ \frac{\spacestep^2}{2} \productrelaxation \Biggl ( \frac{1}{\latticevelocity^2} \partial_{tt} - (1-\relaxparletter_1) \duboisoperatormatrixentry_{11}\duboisoperatormatrixentry_{11} - (1-\relaxparletter_1) \sum_{\ell = 2}^{\velocitynumber} \duboisoperatormatrixentry_{1\ell} \duboisoperatormatrixentry_{\ell 1} + \relaxparletter_1 \sum_{\indiceslines = 2}^{\velocitynumber} \frac{1}{\relaxparletter_{\indiceslines}} \Biggl ( \frac{1}{\latticevelocity^2}\partial_{tt} - (1 - \relaxparletter_{\indiceslines}) \sum_{\ell = 1}^{\velocitynumber} \duboisoperatormatrixentry_{\indiceslines \ell}\duboisoperatormatrixentry_{\ell \indiceslines} \Biggr ) \Biggr )  \Biggr )  \diagmatrix\Biggl ( \frac{1}{\relaxparletter_{1}}, \frac{1}{\relaxparletter_2}, \dots, \frac{1}{\relaxparletter_{\velocitynumber}}\Biggr )\\
    &-   \diagmatrix \Biggl ( \frac{1}{\relaxparletter_{1}}, \frac{1}{\relaxparletter_2}, \dots, \frac{1}{\relaxparletter_{\velocitynumber}}\Biggr )  \genericmatrixtwo\diagmatrix \Biggl ( \frac{1}{\relaxparletter_{1}}, \frac{1}{\relaxparletter_2}, \dots, \frac{1}{\relaxparletter_{\velocitynumber}}\Biggr ) \Biggr ) + \bigO{\spacestep^3}.
  \end{align*}
  In this case, we do not even have to take the limit for $\relaxparletter_{1} \to 0$, since all the terms in $\relaxparletter_{1}$ cancel.
  Therefore, for the very first component, we get
  \begin{align}
    \left (   \differentialMatrixOperator{\genericmatrix}{\adjugate (\genericmatrix)}{\genericmatrixtwo} \right )_{11} &=  \spacestep \productrelaxation \Biggl ( \frac{1}{\latticevelocity}\partial_t  \sum_{\ell = 2}^{\velocitynumber} \frac{1}{\relaxparletter_{\ell}} + \sum_{\indiceslines = 2}^{\velocitynumber} \Biggl (\frac{1}{\relaxparletter_{\indiceslines}} - 1 \Biggr ) \duboisoperatormatrixentry_{\indiceslines\indiceslines} \Biggr ) + \frac{\spacestep}{2} \productrelaxation \Biggl (  \frac{1}{\latticevelocity^2}\partial_{tt}  \sum_{\ell = 2}^{\velocitynumber} \frac{1}{\relaxparletter_{\ell}} - \sum_{\indiceslines = 2}^{\velocitynumber} \Biggl (\frac{1}{\relaxparletter_{\indiceslines}} - 1 \Biggr )\sum_{\ell = 1}^{\velocitynumber} \duboisoperatormatrixentry_{\indiceslines \ell} \duboisoperatormatrixentry_{\ell \indiceslines} \Biggr )  + \bigO{\spacestep^3}. \label{eq:FirstDerivativeAdjugatetOnResolvent11}
  \end{align}
  Now consider $\indicescolumns \in \integerinterval{2}{\velocitynumber}$, then
  \begin{equation}\label{eq:FirstDerivativeAdjugatetOnResolvent1j}
    \left (   \differentialMatrixOperator{\genericmatrix}{\adjugate (\genericmatrix)}{\genericmatrixtwo} \right )_{1 \indicescolumns} = - \spacestep \productrelaxation \Biggl (\frac{1}{\relaxparletter_{\indicescolumns}} - 1 \Biggr ) \duboisoperatormatrixentry_{1 \indicescolumns} + \frac{\spacestep^2}{2} \productrelaxation \Biggl (\frac{1}{\relaxparletter_{\indicescolumns}} - 1 \Biggr ) \Biggl (\duboisoperatormatrixentry_{11}\duboisoperatormatrixentry_{1\indicescolumns} + \sum_{\ell = 2}^{\velocitynumber} \duboisoperatormatrixentry_{1\ell} \duboisoperatormatrixentry_{\ell \indicescolumns} \Biggr ) + \bigO{\spacestep^3}.
  \end{equation}
  This gives all the first-order terms on the first row of $\adjugate (\asymptotictimeshiftoperator \matricial{\identity} - \asymptoticschememoments)$ and part of the second-order terms.
  \item Let $\genericmatrix = \relaxationmatrix \in \lineargroup{\velocitynumber}{\reals} \subset \lineargroup{\velocitynumber}{\taylorseries}$ and $\genericmatrixtwo = \spacestep \left (\frac{1}{\latticevelocity}\partial_t \matricial{\identity} + \duboisoperatormatrix (\matricial{\identity} - \relaxationmatrix) \right ) + \bigO{\spacestep^2}  \in \matrixspace{\velocitynumber}{\taylorseries}$ from \Cref{lemma:ExpansionResolvent}. 
  We reuse computations from \eqref{eq:ExpansionSecondDerivativeDeterminant} as well as \eqref{eq:secondDerivativeAjugate}.
  \begin{align*}
    \secondDifferentialMatrixOperator{\genericmatrix}{\adjugate (\genericmatrix)}{\genericmatrixtwo}{\genericmatrixtwo} = \Biggl ( \spacestep^2  \productrelaxation \Biggl ( &2 \Bigl ( \frac{1}{\latticevelocity}\partial_t + (1-\relaxparletter_1) \duboisoperatormatrixentry_{11} \Bigr ) \sum_{\indiceslines = 2}^{\velocitynumber} \frac{1}{\relaxparletter_{\indiceslines}} \Bigl (\frac{1}{\latticevelocity} \partial_t + (1 - \relaxparletter_{\indiceslines} ) \duboisoperatormatrixentry_{\indiceslines \indiceslines} \Bigr ) + \relaxparletter_{1} \Biggl ( \sum_{\indiceslines = 2}^{\velocitynumber} \frac{1}{\relaxparletter_{\indiceslines}} \Bigl ( \frac{1}{\latticevelocity}\partial_t + (1 - \relaxparletter_{\indiceslines}  ) \duboisoperatormatrixentry_{\indiceslines \indiceslines} \Bigr )\Biggr )^2 \nonumber \\
    -&2   ( 1 - \relaxparletter_1  )  \sum_{\ell = 2}^{\velocitynumber} \left (\frac{1}{\relaxparletter_{\ell}} - 1 \right ) \duboisoperatormatrixentry_{1\ell} \duboisoperatormatrixentry_{\ell 1} - \relaxparletter_{1} \sum_{\indiceslines = 2}^{\velocitynumber} \frac{1}{\relaxparletter_{\indiceslines}^2} \Bigl ( \frac{1}{\latticevelocity}\partial_t + (1 - \relaxparletter_{\indiceslines}) \duboisoperatormatrixentry_{\indiceslines \indiceslines} \Bigr )^2 \nonumber \\
    - &\relaxparletter_1 \sum_{\indiceslines = 2}^{\velocitynumber} \sum_{\substack{\ell = 2 \\ \ell \neq \indiceslines}}^{\velocitynumber} \left (\frac{1}{\relaxparletter_{\indiceslines}} - 1 \right ) \left (\frac{1}{\relaxparletter_{\ell}} - 1 \right ) \duboisoperatormatrixentry_{\indiceslines \ell} \duboisoperatormatrixentry_{ \ell \indiceslines} \Biggr )\Biggr )\diagmatrix \Biggl ( \frac{1}{\relaxparletter_{1}}, \frac{1}{\relaxparletter_2}, \dots, \frac{1}{\relaxparletter_{\velocitynumber}}\Biggr ) \\
    +2 \relaxparletter_{1} \productrelaxation \diagmatrix \Biggl ( \frac{1}{\relaxparletter_{1}}, \frac{1}{\relaxparletter_2}, \dots, \frac{1}{\relaxparletter_{\velocitynumber}}\Biggr ) &\Biggl ( \genericmatrixtwo \relaxationmatrix^{-1} \genericmatrixtwo - \trace(\relaxationmatrix^{-1} \genericmatrixtwo ) \genericmatrixtwo \Biggr )\diagmatrix \Biggl ( \frac{1}{\relaxparletter_{1}}, \frac{1}{\relaxparletter_2}, \dots, \frac{1}{\relaxparletter_{\velocitynumber}}\Biggr )
    +\bigO{\spacestep^3}.
  \end{align*}
  Then we have, for the first matrix entry
  \begin{align}
    \left (  \secondDifferentialMatrixOperator{\genericmatrix}{\adjugate (\genericmatrix)}{\genericmatrixtwo}{\genericmatrixtwo} \right )_{11} =  \spacestep^2 \productrelaxation \Biggl ( \Biggl ( \sum_{\indiceslines = 2}^{\velocitynumber} \frac{1}{\relaxparletter_{\indiceslines}} \Bigl (\frac{1}{\latticevelocity}\partial_t + (1 - \relaxparletter_{\indiceslines} ) \duboisoperatormatrixentry_{\indiceslines \indiceslines} \Bigr ) \Biggr )^2 &- \sum_{\indiceslines = 2}^{\velocitynumber} \frac{1}{\relaxparletter_{\indiceslines}^2} \Bigr (\frac{1}{\latticevelocity}\partial_t + (1 - \relaxparletter_{\indiceslines}) \duboisoperatormatrixentry_{\indiceslines \indiceslines} \Bigr )^2 \label{eq:SecondDerivativeAdjugatetOnResolvent11} \\
     &- \sum_{\indiceslines = 2}^{\velocitynumber}  \Biggl (\frac{1}{\relaxparletter_{\indiceslines}} - 1 \Biggr ) \sum_{\substack{\ell = 2 \\ \ell \neq \indiceslines}}^{\velocitynumber} \Biggl (\frac{1}{\relaxparletter_{\ell}} - 1 \Biggr ) \duboisoperatormatrixentry_{\indiceslines \ell} \duboisoperatormatrixentry_{ \ell \indiceslines} \Biggr ) + \bigO{\spacestep^3}, \nonumber
  \end{align}
  independent from $\relaxparletter_{1}$.
  For $\indicescolumns \in \integerinterval{2}{\velocitynumber}$
  \begin{align}
    \left (  \secondDifferentialMatrixOperator{\genericmatrix}{\adjugate (\genericmatrix)}{\genericmatrixtwo}{\genericmatrixtwo} \right )_{1 \indicescolumns} = 2 \spacestep^2  \productrelaxation  \Biggl (\frac{1}{\relaxparletter_{\indicescolumns}} - 1 \Biggr ) \Biggl ( \frac{1}{\relaxparletter_{\indicescolumns}} &\duboisoperatormatrixentry_{1\indicescolumns} \Bigl ( \frac{1}{\latticevelocity} \partial_t + (1 - \relaxparletter_{\indicescolumns}) \duboisoperatormatrixentry_{\indicescolumns \indicescolumns} \Bigr ) + \sum_{\substack{\ell = 2 \\ \ell \neq \indicescolumns}}^{\velocitynumber} \Biggl (\frac{1}{\relaxparletter_{\ell}} - 1 \Biggr ) \duboisoperatormatrixentry_{1 \ell} \duboisoperatormatrixentry_{\ell \indicescolumns} \nonumber \\
    - &\duboisoperatormatrixentry_{1 \indicescolumns} \sum_{\indiceslines = 2}^{\velocitynumber} \frac{1}{\relaxparletter_{\indiceslines}} \Bigl (\frac{1}{\latticevelocity}\partial_t + (1 - \relaxparletter_{\indiceslines}) \duboisoperatormatrixentry_{\indiceslines \indiceslines} \Bigr ) \Biggr ) + \bigO{\spacestep^3}. \label{eq:SecondDerivativeAdjugatetOnResolvent1j}
  \end{align}
\end{itemize}
Using \eqref{eq:FirstDerivativeAdjugatetOnResolvent11} and \eqref{eq:SecondDerivativeAdjugatetOnResolvent11}, we have that the first entry on the first row of $\adjugate (\asymptotictimeshiftoperator \matricial{\identity} - \asymptoticschememoments)$ is
\begin{align}
  &\lim_{\relaxparletter_1 \to 0}(\adjugate (\asymptotictimeshiftoperator \matricial{\identity} - \asymptoticschememoments))_{11} = \productrelaxation  +  \spacestep \productrelaxation \Biggl ( \frac{1}{\latticevelocity}\partial_t \sum_{\ell = 2}^{\velocitynumber} \frac{1}{\relaxparletter_{\ell}} + \sum_{\indiceslines = 2}^{\velocitynumber} \Biggl (\frac{1}{\relaxparletter_{\indiceslines}} - 1 \Biggr ) \duboisoperatormatrixentry_{\indiceslines\indiceslines} \Biggr ) +  \frac{\spacestep^2}{2} \productrelaxation \Biggl (\frac{1}{\latticevelocity^2}\partial_{tt}  \sum_{\ell = 2}^{\velocitynumber} \frac{1}{\relaxparletter_{\ell}}- \sum_{\indiceslines = 2}^{\velocitynumber} \Biggl (\frac{1}{\relaxparletter_{\indiceslines}} - 1 \Biggr )\sum_{\ell = 1}^{\velocitynumber} \duboisoperatormatrixentry_{\indiceslines \ell} \duboisoperatormatrixentry_{\ell \indiceslines} \nonumber \\
  &+ \Biggl ( \sum_{\indiceslines = 2}^{\velocitynumber} \frac{1}{\relaxparletter_{\indiceslines}} \Bigl ( \frac{1}{\latticevelocity}\partial_t + (1 - \relaxparletter_{\indiceslines}) \duboisoperatormatrixentry_{\indiceslines \indiceslines} \Bigr )\Biggr )^2 - \sum_{\indiceslines = 2}^{\velocitynumber} \frac{1}{\relaxparletter_{\indiceslines}^2} \Bigl ( \frac{1}{\latticevelocity}\partial_t + (1 - \relaxparletter_{\indiceslines}) \duboisoperatormatrixentry_{\indiceslines \indiceslines} \Bigr )^2  - \sum_{\indiceslines = 2}^{\velocitynumber}\Biggl (\frac{1}{\relaxparletter_{\indiceslines}} - 1 \Biggr )  \sum_{\substack{\ell = 2 \\ \ell \neq \indiceslines}}^{\velocitynumber}  \Biggl (\frac{1}{\relaxparletter_{\ell}} - 1 \Biggr ) \duboisoperatormatrixentry_{\indiceslines \ell} \duboisoperatormatrixentry_{ \ell \indiceslines}\Biggr )+ \bigO{\spacestep^3}. \label{eq:AdjugatetExpansionOnResolvent11}
\end{align}
Using \eqref{eq:FirstDerivativeAdjugatetOnResolvent1j} and \eqref{eq:SecondDerivativeAdjugatetOnResolvent1j}, for any $\indicescolumns \in \integerinterval{2}{\velocitynumber}$, we write 
\begin{align}
  \lim_{\relaxparletter_1 \to 0}(\adjugate (\asymptotictimeshiftoperator \matricial{\identity} - \asymptoticschememoments))_{1\indicescolumns} = - \spacestep  \productrelaxation \Biggl (\frac{1}{\relaxparletter_{\indicescolumns}} - 1 \Biggr ) \duboisoperatormatrixentry_{1\indicescolumns} &+ \frac{\spacestep^2}{2} \productrelaxation \Biggl (\frac{1}{\relaxparletter_{\indicescolumns}} - 1 \Biggr ) \Biggl ( \duboisoperatormatrixentry_{11}\duboisoperatormatrixentry_{1\indicescolumns} + \sum_{\ell = 2}^{\velocitynumber} \duboisoperatormatrixentry_{1\ell} \duboisoperatormatrixentry_{\ell \indicescolumns} +  \frac{2}{\relaxparletter_{\indicescolumns}} \duboisoperatormatrixentry_{1\indicescolumns} \Bigl (\frac{1}{\latticevelocity}\partial_t + (1 - \relaxparletter_{\indicescolumns}) \duboisoperatormatrixentry_{\indicescolumns \indicescolumns} \Bigr ) \nonumber \\
  &+ 2\sum_{\substack{\ell = 2 \\ \ell \neq \indicescolumns}}^{\velocitynumber} \Biggl (\frac{1}{\relaxparletter_{\ell}} - 1 \Biggr ) \duboisoperatormatrixentry_{1 \ell} \duboisoperatormatrixentry_{\ell \indicescolumns}  - 2\duboisoperatormatrixentry_{1 \indicescolumns} \sum_{\indiceslines = 2}^{\velocitynumber} \frac{1}{\relaxparletter_{\indiceslines}} \Bigl ( \frac{1}{\latticevelocity}\partial_t + (1 - \relaxparletter_{\indiceslines}) \duboisoperatormatrixentry_{\indiceslines \indiceslines} \Bigr )  \Biggr ) + \bigO{\spacestep^3}. \label{eq:AdjugatetExpansionOnResolvent1j}
\end{align}
In general, we have written, for the first row, the leading terms in $\adjugate (\asymptotictimeshiftoperator \matricial{\identity} - \asymptoticschememoments)$. We shall take its product with $\asymptoticschemeequil$. Thus, one has
\begin{align}
  \adjugate (\asymptotictimeshiftoperator \matricial{\identity} - \asymptoticschememoments) \asymptoticschemeequil = \termatorder{\adjugate (\asymptotictimeshiftoperator \matricial{\identity} - \asymptoticschememoments) }{0} \termatorder{\asymptoticschemeequil}{0} &+ \spacestep \left ( \termatorder{\adjugate (\asymptotictimeshiftoperator \matricial{\identity} - \asymptoticschememoments) }{0} \termatorder{\asymptoticschemeequil}{1} +  \termatorder{\adjugate (\asymptotictimeshiftoperator \matricial{\identity} - \asymptoticschememoments) }{1} \termatorder{\asymptoticschemeequil}{0}\right ), \label{eq:ExpansionRightHandSide} \\
  &+ \spacestep^2 \left (  \termatorder{\adjugate (\asymptotictimeshiftoperator \matricial{\identity} - \asymptoticschememoments) }{0} \termatorder{\asymptoticschemeequil}{2} +  \termatorder{\adjugate (\asymptotictimeshiftoperator \matricial{\identity} - \asymptoticschememoments) }{1} \termatorder{\asymptoticschemeequil}{1} +  \termatorder{\adjugate (\asymptotictimeshiftoperator \matricial{\identity} - \asymptoticschememoments) }{2} \termatorder{\asymptoticschemeequil}{0}\right ) + \bigO{\spacestep^3}, \nonumber
\end{align}
generating products of terms in the fashion of the Cauchy product.
This completes the preliminary results needed to prove \Cref{thm:AcousticScaling}.

\subsection{Overall computation}

We now put all the previous calculations together to prove \Cref{thm:AcousticScaling}.
As previously pointed out, we can assume, without loss of generality, that $\relaxparletter_1 = 0$, passing to the limit.
This allows to deal with simpler expressions with less terms.

\subsubsection{First-order equations}\label{sec:FirstOrderComputations}
To find the target PDE, it is sufficient to truncate all the formal power series at $\bigO{\spacestep^2}$.
In particular, using the fact that the first column of $\asymptoticschemeequil$ is zero for $\relaxparletter_1 = 0$, we have that $\lim_{\relaxparletter_1 \to 0}(\adjugate (\asymptotictimeshiftoperator \matricial{\identity} - \asymptoticschememoments) \asymptoticschemeequil)_{11} = 0$.
Observe that if the relaxation parameter corresponding to the conserved moment were not equal to zero, we would have $(\adjugate (\asymptotictimeshiftoperator \matricial{\identity} - \asymptoticschememoments) \asymptoticschemeequil)_{11} = \bigO{1}$. Still the matrix $\relaxationmatrix$ would not be singular, thus we would have some non vanishing zero-order term in $\determinant (\asymptotictimeshiftoperator \matricial{\identity} - \asymptoticschememoments)$ to compensate the one from the adjugate.

For any $\indicescolumns \in \integerinterval{2}{\velocitynumber}$, using \eqref{eq:AdjugatetExpansionOnResolvent1j}, \Cref{lemma:LinkBetweenWeandDubois} and \eqref{eq:ExpansionRightHandSide}, entails
\begin{align*}
  \lim_{\relaxparletter_1 \to 0}(\adjugate (\asymptotictimeshiftoperator \matricial{\identity} - \asymptoticschememoments) \asymptoticschemeequil)_{1 \indicescolumns} &= \lim_{\relaxparletter_1 \to 0} \spacestep \left ( \left ( \termatorder{\adjugate (\asymptotictimeshiftoperator \matricial{\identity} - \asymptoticschememoments)}{0} \termatorder{\asymptoticschemeequil}{1}  \right )_{1\indicescolumns} + \left ( \termatorder{\adjugate (\asymptotictimeshiftoperator \matricial{\identity} - \asymptoticschememoments)}{1} \termatorder{\asymptoticschemeequil}{0} \right )_{1\indicescolumns} \right ) + \bigO{\spacestep^2} = - \spacestep \productrelaxation \duboisoperatormatrixentry_{1\indicescolumns}  + \bigO{\spacestep^2}. \nonumber
\end{align*}
\eqref{eq:DeterminantExpansionOnResolvent} directly yields
\begin{equation*}
  \lim_{\relaxparletter_1 \to 0}\determinant (\asymptotictimeshiftoperator \matricial{\identity} - \asymptoticschememoments) = \spacestep  \productrelaxation \Bigl ( \frac{1}{\latticevelocity}\partial_t  + \duboisoperatormatrixentry_{11} \Bigr ) + \bigO{\spacestep^2},
\end{equation*}
thus we obtain the modified equation (whatever the choice of $\relaxparletter_1 \in \reals$)
\begin{equation*}
  \spacestep \frac{\productrelaxation}{\latticevelocity}  \Biggl ( \partial_t \momentletter_1  + \latticevelocity \duboisoperatormatrixentry_{11}  \momentletter_1 + \latticevelocity \sum_{\indicescolumns = 2}^{\velocitynumber} \duboisoperatormatrixentry_{1\indicescolumns} \momentletter_{\indicescolumns}^{\atequilibrium} \Biggr ) = \bigO{\spacestep^2},
\end{equation*}
giving the desired result for $\consmomentsnumber = 1$ upon dividing by the constant $\productrelaxation$.
We explicitly see the target PDE.
Observe that the term $\productrelaxation$ is never present in the computations by \cite{dubois2019nonlinear} because they are done on the original \lbm scheme \eqref{eq:SchemeAB} or \eqref{eq:SchemeFullyOperator} which has only one time step.
For instance, in \cite{dubois2019nonlinear}, the multi-step nature of the problem, generated by the non-conserved moments relaxing away from the equilibrium, is damped at the very beginning of the procedure by performing the Taylor expansions of the scheme on the non-conserved variables and then plugging them into the expansions for the conserved moments.

Before clarifying the terms at the next order in the modified equation (for any $\momentletter_1$) or equivalently, finding the precise expression of the truncation error (for $\momentletter_1 \equiv \tilde{\momentletter}_1$ solution of the target PDE), let us utilize the previous equation to get rid of the time derivatives in the second order terms.
This can be rigorously done if $\momentletter_1 \equiv \tilde{\momentletter}_1$, where $ \tilde{\momentletter}_1$ is the smooth solution of the target PDE and yields the truncation error. For any $\momentletter_1$, this is formal because one assumes that differentiation preserves the asymptotic relations from the symbol $\bigO{\cdot}$.
This process constitutes the policy by \cite{dubois2008equivalent, dubois2019nonlinear} and is common to all the approaches (Chapman-Enskog, equivalent equation, Maxwell iteration, \emph{etc.}) in order to find the value of the diffusion coefficients from the second-order terms. 
Moreover, this is classical for \fd schemes, see \cite{warming1974modified, carpentier1997derivation}.
Notice that in this case, where $\consmomentsnumber = 1$, $\gammaDubois{1}{\cdot}$ is a scalar, here denoted $\gammaDuboisScalar{1}$ for brevity.
\begin{align}
    \partial_t \momentletter_1 &=  -\latticevelocity \duboisoperatormatrixentry_{11} \momentletter_{1}  - \latticevelocity \sum_{\indicescolumns = 2}^{\velocitynumber} \duboisoperatormatrixentry_{1\indicescolumns} \momentletter_{\indicescolumns}^{\atequilibrium} + \bigO{\spacestep} = -\gammaDuboisScalar{1} + \bigO{\spacestep}, \label{eq:DerivationPreviousOrder} \\
    \partial_t \vectorial{\momentletter}^{\atequilibrium} &= \straightderivative{\vectorial{\momentletter}^{\atequilibrium}}{\momentletter_1} \partial_t \momentletter_1 = -\straightderivative{\vectorial{\momentletter}^{\atequilibrium}}{\momentletter_1}  \Biggl (\latticevelocity\duboisoperatormatrixentry_{11} \momentletter_{1}  + \latticevelocity \sum_{\indicescolumns = 2}^{\velocitynumber} \duboisoperatormatrixentry_{1\indicescolumns} \momentletter_{\indicescolumns}^{\atequilibrium} \Biggr ) + \bigO{\spacestep} = -\straightderivative{\vectorial{\momentletter}^{\atequilibrium}}{\momentletter_1}  \gammaDuboisScalar{1} + \bigO{\spacestep} , \label{eq:DerivativeTimeEquilibria} \\
    \partial_{tt} \momentletter_1 &= -\partial_t \Biggl ( \latticevelocity \duboisoperatormatrixentry_{11} \momentletter_{1} + \latticevelocity \sum_{\indicescolumns = 2}^{\velocitynumber} \duboisoperatormatrixentry_{1\indicescolumns} \momentletter_{\indicescolumns}^{\atequilibrium} \Biggr ) + \bigO{\spacestep} =  - \latticevelocity\duboisoperatormatrixentry_{11} \partial_t\momentletter_{1} - \latticevelocity\sum_{\indicescolumns = 2}^{\velocitynumber} \duboisoperatormatrixentry_{1\indicescolumns} \partial_t \momentletter_{\indicescolumns}^{\atequilibrium} + \bigO{\spacestep}, \\
    &=\latticevelocity \duboisoperatormatrixentry_{11}  \gammaDuboisScalar{1} + \latticevelocity \sum_{\indicescolumns = 2}^{\velocitynumber} \duboisoperatormatrixentry_{1\indicescolumns} \straightderivative{{\momentletter}_{\indicescolumns}^{\atequilibrium}}{\momentletter_1}  \gammaDuboisScalar{1} + \bigO{\spacestep} = \latticevelocity^2 \duboisoperatormatrixentry_{11} \duboisoperatormatrixentry_{11} \momentletter_{1} + \latticevelocity^2 \duboisoperatormatrixentry_{11} \sum_{\indicescolumns = 2}^{\velocitynumber} \duboisoperatormatrixentry_{1\indicescolumns} \momentletter_{\indicescolumns}^{\atequilibrium} + \latticevelocity \sum_{\indicescolumns = 2}^{\velocitynumber} \duboisoperatormatrixentry_{1\indicescolumns} \straightderivative{{\momentletter}_{\indicescolumns}^{\atequilibrium}}{\momentletter_1}  \gammaDuboisScalar{1} + \bigO{\spacestep}. \label{eq:DerivationPreviousOrderTwice}
\end{align}
These formal equalities are obtained by taking advantage either of the chain rule, since the moments at equilibrium are functions of the conserved moments, or of the re-injection of \eqref{eq:DerivationPreviousOrder} by assuming that the differentiation preserves the asymptotic relations from the symbol $\bigO{\cdot}$.
These equalities become rigorous and lack of the $\bigO{\spacestep}$ term if $\momentletter_1 \equiv \tilde{\momentletter}_1$, the smooth solution of the target PDE.

\subsubsection{Second-order equations}

We can now go to the computation of the truncation error in \Cref{thm:AcousticScaling}, which is more involved due to the presence of more terms to estimate.
To make the link with the findings of \cite{dubois2019nonlinear}, the increased complexity comes from the more intricate and entangled block structure of $\duboisoperatormatrix^2$.
We have to treat the second-order term in \eqref{eq:ExpansionRightHandSide}, made up of three products.
For any $\indicescolumns \in \integerinterval{2}{\velocitynumber}$ (once again, the first component vanishes for $\relaxparletter_1 = 0$)
\begin{itemize}
  \item Using \Cref{lemma:LinkBetweenWeandDubois} and the zero-order expansion of the adjugate gives
  \begin{align*}
    \lim_{\relaxparletter_1 \to 0}\left (\termatorder{\adjugate (\asymptotictimeshiftoperator \matricial{\identity} - \asymptoticschememoments)}{0} \termatorder{\asymptoticschemeequil}{2}\right )_{1\indicescolumns} &=  \frac{\relaxparletter_{\indicescolumns}\productrelaxation}{2}  \Biggl (\duboisoperatormatrixentry_{11} \duboisoperatormatrixentry_{1\indicescolumns} + \sum_{\ell = 2}^{\velocitynumber} \duboisoperatormatrixentry_{1\ell}\duboisoperatormatrixentry_{\ell \indicescolumns} \Biggr ).
  \end{align*}
  \item Using \Cref{lemma:LinkBetweenWeandDubois} with \eqref{eq:AdjugatetExpansionOnResolvent11} and  \eqref{eq:AdjugatetExpansionOnResolvent1j}
  \begin{align*}
    \lim_{\relaxparletter_1 \to 0}\left ( \termatorder{\adjugate (\asymptotictimeshiftoperator \matricial{\identity} - \asymptoticschememoments)}{1} \termatorder{\asymptoticschemeequil}{1} \right )_{1 \indicescolumns} = -\relaxparletter_{\indicescolumns} \productrelaxation   \Biggl (  \frac{1}{\latticevelocity} \duboisoperatormatrixentry_{1\indicescolumns}  \partial_t  \sum_{\ell = 2}^{\velocitynumber} \frac{1}{\relaxparletter_{\ell}}  + \duboisoperatormatrix_{1\indicescolumns} \sum_{\ell = 2}^{\velocitynumber} \Biggl (\frac{1}{\relaxparletter_{\ell}} - 1 \Biggr )\duboisoperatormatrixentry_{\ell \ell} -  \sum_{\ell = 2}^{\velocitynumber} \Biggl (\frac{1}{\relaxparletter_{\ell}} - 1 \Biggr )\duboisoperatormatrixentry_{1\ell} \duboisoperatormatrixentry_{\ell \indicescolumns} \Biggr ).
  \end{align*}
  \item Using \Cref{lemma:LinkBetweenWeandDubois} and \eqref{eq:AdjugatetExpansionOnResolvent1j}
  \begin{align*}
    \lim_{\relaxparletter_1 \to 0}\left ( \termatorder{\adjugate (\asymptotictimeshiftoperator \matricial{\identity} - \asymptoticschememoments)}{2} \termatorder{\asymptoticschemeequil}{0} \right )_{1\indicescolumns} &= \productrelaxation (1 - \relaxparletter_{\indicescolumns} ) \Biggl ( \frac{1}{2}\duboisoperatormatrixentry_{11}\duboisoperatormatrixentry_{1\indicescolumns}+ \sum_{\ell = 2}^{\velocitynumber} \Biggl (\frac{1}{\relaxparletter_{\ell}} - \frac{1}{2} \Biggr ) \duboisoperatormatrixentry_{1 \ell} \duboisoperatormatrixentry_{\ell \indicescolumns}  - \Biggl (\frac{1}{\relaxparletter_{\indicescolumns}} - 1 \Biggr ) \duboisoperatormatrixentry_{1\indicescolumns} \duboisoperatormatrixentry_{\indicescolumns \indicescolumns} \\
    &+  \frac{1}{\relaxparletter_{\indicescolumns}} \duboisoperatormatrixentry_{1\indicescolumns} \Bigl (\frac{1}{\latticevelocity} \partial_t + (1 - \relaxparletter_{\indicescolumns}) \duboisoperatormatrixentry_{\indicescolumns \indicescolumns} \Bigr ) - \duboisoperatormatrixentry_{1 \indicescolumns} \sum_{\indiceslines = 2}^{\velocitynumber} \frac{1}{\relaxparletter_{\indiceslines}} \Bigl ( \frac{1}{\latticevelocity}\partial_t + (1 - \relaxparletter_{\indiceslines}) \duboisoperatormatrixentry_{\indiceslines \indiceslines} \Bigr )  \Biggr ).
  \end{align*}
\end{itemize}
Summing these three contributions and after some straightforward but tedious computations, the second-order term in \eqref{eq:ExpansionRightHandSide} is given by
\begin{equation*}
  \lim_{\relaxparletter_1 \to 0}\left ( \termatorderparenthesis{\adjugate (\asymptotictimeshiftoperator \matricial{\identity} - \asymptoticschememoments) \asymptoticschemeequil}{2} \right )_{1\indicescolumns}= \productrelaxation  \Biggl ( \frac{1}{2}\duboisoperatormatrixentry_{11}\duboisoperatormatrixentry_{1\indicescolumns} + \sum_{\ell = 2}^{\velocitynumber} \Biggl (\frac{1}{\relaxparletter_{\ell}} - \frac{1}{2} \Biggr ) \duboisoperatormatrixentry_{1 \ell} \duboisoperatormatrixentry_{\ell \indicescolumns} -  \frac{1}{\latticevelocity} \Biggl (1 + \sum_{\substack{\ell = 2 \\ \ell \neq \indicescolumns}}^{\velocitynumber} \frac{1}{\relaxparletter_{\ell}} \Biggr ) \duboisoperatormatrixentry_{1\indicescolumns}  \partial_t - \duboisoperatormatrixentry_{1\indicescolumns} \sum_{\ell = 2}^{\velocitynumber} \Biggl (\frac{1}{\relaxparletter_{\ell}} - 1 \Biggr )\duboisoperatormatrixentry_{\ell \ell} \Biggr ).
\end{equation*}
Hence, using \eqref{eq:DerivativeTimeEquilibria} to get rid of the time derivative of the equilibria, we have
\begin{align*}
  \lim_{\relaxparletter_1 \to 0}\sum_{\indicescolumns = 2}^{\velocitynumber} \left ( \termatorderparenthesis{\adjugate (\asymptotictimeshiftoperator \matricial{\identity} - \asymptoticschememoments) \asymptoticschemeequil}{2} \right )_{1\indicescolumns}\momentletter_{\indicescolumns}^{\atequilibrium} =  \productrelaxation \sum_{\indicescolumns = 2}^{\velocitynumber}  \Biggl ( \frac{1}{2}&\duboisoperatormatrixentry_{11}\duboisoperatormatrixentry_{1\indicescolumns} \momentletter_{\indicescolumns}^{\atequilibrium} + \sum_{\ell = 2}^{\velocitynumber} \Biggl (\frac{1}{\relaxparletter_{\ell}} - \frac{1}{2} \Biggr ) \duboisoperatormatrixentry_{1 \ell} \duboisoperatormatrixentry_{\ell \indicescolumns} \momentletter_{\indicescolumns}^{\atequilibrium} \\
  + &\frac{1}{\latticevelocity}\Biggl (1 + \sum_{\substack{\ell = 2 \\ \ell \neq \indicescolumns}}^{\velocitynumber} \frac{1}{\relaxparletter_{\ell}} \Biggr ) \duboisoperatormatrixentry_{1\indicescolumns}  \straightderivative{\momentletter_{\indicescolumns}^{\atequilibrium}}{\momentletter_1}  \gammaDuboisScalar{1} - \duboisoperatormatrixentry_{1\indicescolumns} \sum_{\ell = 2}^{\velocitynumber} \Biggl (\frac{1}{\relaxparletter_{\ell}} - 1 \Biggr )\duboisoperatormatrixentry_{\ell \ell} \momentletter_{\indicescolumns}^{\atequilibrium} \Biggr ) + \bigO{\spacestep}.
\end{align*}
Notice that in this result, a reminder of order $\bigO{\spacestep}$ appears.
Once again, if $\momentletter_1 \equiv \tilde{\momentletter}_1$, this reminder is not present and we would find part of the truncation error.
Once more, using \eqref{eq:DerivationPreviousOrder} and \eqref{eq:DerivationPreviousOrderTwice} to eliminate the time derivatives in the second-order terms from \eqref{eq:DeterminantExpansionOnResolvent} gives
\begin{align*}
  \lim_{\relaxparletter_1 \to 0}\termatorderparenthesis{\determinant (\asymptotictimeshiftoperator \matricial{\identity} - \asymptoticschememoments)}{2} \momentletter_{1} = \productrelaxation \Biggl (& \frac{1}{\latticevelocity^2} \Biggl (\frac{1}{2} + \sum_{\ell = 2}^{\velocitynumber} \frac{1}{\relaxparletter_{\ell}} \Biggr ) \partial_{tt} \momentletter_{1} + \frac{1}{\latticevelocity}  \duboisoperatormatrixentry_{11} \partial_{t} \sum_{\ell = 2}^{\velocitynumber} \frac{1}{\relaxparletter_{\ell}} + \frac{1}{\latticevelocity}\sum_{\indiceslines = 2}^{\velocitynumber} \Biggl ( \frac{1}{\relaxparletter_{\indiceslines}} - 1 \Biggr ) \duboisoperatormatrixentry_{\indiceslines \indiceslines} \partial_t  \momentletter_{1}  \\
  &- \frac{1}{2} \duboisoperatormatrixentry_{11}\duboisoperatormatrixentry_{11} \momentletter_{1}
  - \sum_{\ell = 2}^{\velocitynumber} \Biggl (\frac{1}{\relaxparletter_{\ell}} - \frac{1}{2} \Biggr )\duboisoperatormatrixentry_{1\ell} \duboisoperatormatrixentry_{\ell 1} \momentletter_{1}  + \duboisoperatormatrixentry_{11} \sum_{\indiceslines = 2}^{\velocitynumber} \left ( \frac{1}{\relaxparletter_{\indiceslines}} - 1 \right ) \duboisoperatormatrixentry_{\indiceslines \indiceslines} \momentletter_{1} \Biggr ) \\
  = \productrelaxation  \Biggl ( &\Biggl ( \frac{1}{2} + \sum_{\ell  = 2}^{\velocitynumber} \frac{1}{\relaxparletter_{\ell}} \Biggr ) \Biggl (  \duboisoperatormatrixentry_{11} \duboisoperatormatrixentry_{11} \momentletter_1 +  \duboisoperatormatrixentry_{11} \sum_{\indicescolumns = 2}^{\velocitynumber} \duboisoperatormatrixentry_{1\indicescolumns}\momentletter_{\indicescolumns}^{\atequilibrium}  + \frac{1}{\latticevelocity} \sum_{\indicescolumns = 2}^{\velocitynumber} \duboisoperatormatrixentry_{1\indicescolumns} \straightderivative{{\momentletter}_{\indicescolumns}^{\atequilibrium}}{\momentletter_1}  \gammaDuboisScalar{1}  \Biggr ) \\
  &- \duboisoperatormatrixentry_{11}\Biggl (\duboisoperatormatrixentry_{11} \momentletter_{1} + \sum_{\indicescolumns = 2}^{\velocitynumber} \duboisoperatormatrixentry_{1\indicescolumns} \momentletter_{\indicescolumns}^{\atequilibrium} \Biggr ) \sum_{\ell = 2}^{\velocitynumber} \frac{1}{\relaxparletter_{\ell}}  - \Biggl (\sum_{\indiceslines = 2}^{\velocitynumber} \Biggl ( \frac{1}{\relaxparletter_{\indiceslines}} - 1 \Biggr ) \duboisoperatormatrixentry_{\indiceslines \indiceslines} \Biggr ) \Biggl (\duboisoperatormatrixentry_{11} \momentletter_{1} + \sum_{\indicescolumns = 2}^{\velocitynumber} \duboisoperatormatrixentry_{1\indicescolumns} \momentletter_{\indicescolumns}^{\atequilibrium}\Biggr ) \\
  &-\frac{1}{2}\duboisoperatormatrixentry_{11} \duboisoperatormatrixentry_{11} \momentletter_{1} - \sum_{\ell = 2}^{\velocitynumber} \Biggl (\frac{1}{\relaxparletter_{\ell}} - \frac{1}{2} \Biggr ) \duboisoperatormatrixentry_{1\ell} \duboisoperatormatrixentry_{\ell 1} \momentletter_{1} + \duboisoperatormatrixentry_{11} \sum_{\indiceslines = 2}^{\velocitynumber} \Biggl ( \frac{1}{\relaxparletter_{\indiceslines}} - 1 \Biggr ) \duboisoperatormatrixentry_{\indiceslines \indiceslines} \momentletter_{1} \Biggr )  + \bigO{\spacestep}.
\end{align*}
With this, after simplifications, we obtain the remaining term to master the second-order contributions in the modified equation of the \fd scheme \eqref{eq:FDSchemeAdjoint}.
\begin{align*}
  &\termatorderparenthesis{\determinant (\asymptotictimeshiftoperator \matricial{\identity} - \asymptoticschememoments)}{2} \momentletter_{1} -  \sum_{\indicescolumns = 2}^{\velocitynumber} \left ( \termatorderparenthesis{\adjugate (\asymptotictimeshiftoperator \matricial{\identity} - \asymptoticschememoments) \asymptoticschemeequil}{2} \right )_{1\indicescolumns}\momentletter_{\indicescolumns}^{\atequilibrium} \\
  &= -\productrelaxation \Biggl ( \sum_{\indicescolumns = 2}^{\velocitynumber} \Biggl (\frac{1}{\relaxparletter_{\indicescolumns}} - \frac{1}{2} \Biggr ) \duboisoperatormatrixentry_{1\indicescolumns} \duboisoperatormatrixentry_{\indicescolumns 1} \momentletter_{1}  + \sum_{\indicescolumns = 2}^{\velocitynumber} \sum_{\ell = 2}^{\velocitynumber}  \Biggl (\frac{1}{\relaxparletter_{\ell}} - \frac{1}{2} \Biggr ) \duboisoperatormatrixentry_{1\ell} \duboisoperatormatrixentry_{\ell \indicescolumns} \momentletter_{\indicescolumns}^{\atequilibrium} - \frac{1}{\latticevelocity} \sum_{\indicescolumns = 2}^{\velocitynumber} \Biggl (\frac{1}{\relaxparletter_{\indicescolumns}} - \frac{1}{2} \Biggr ) \duboisoperatormatrixentry_{1\indicescolumns} \straightderivative{\momentletter_{\indicescolumns}^{\atequilibrium}}{\momentletter_{1}} \gammaDuboisScalar{1} \Biggr ) + \bigO{\spacestep}.
\end{align*}
To wrap up, these computations yield, together with the ones from \Cref{sec:FirstOrderComputations}, the expected result for $\consmomentsnumber = 1$, which reads
\begin{equation*}
    \spacestep\frac{\productrelaxation}{\latticevelocity} \Biggl ( \partial_{t} \momentletter_1 + \gammaDuboisScalar{1} -\latticevelocity \spacestep \sum_{\indicescolumns = 2}^{\velocitynumber} \Biggl (\frac{1}{\relaxparletter_{\indicescolumns}} - \frac{1}{2} \Biggr ) \duboisoperatormatrixentry_{1\indicescolumns} \Biggl (\duboisoperatormatrixentry_{\indicescolumns 1} \momentletter_1 + \sum_{\ell = 2}^{\velocitynumber} \duboisoperatormatrixentry_{\indicescolumns \ell} \momentletter_{\ell}^{\atequilibrium} - \frac{1}{\latticevelocity} \straightderivative{\momentletter^{\atequilibrium}_{\indicescolumns}}{\momentletter_1} \gammaDuboisScalar{1} \Biggr )\Biggr ) = \bigO{\spacestep^3},
\end{equation*}
and thus proves \Cref{thm:AcousticScaling}.

 \section{Extension of the proofs to several conserved moments: key ideas}\label{sec:ExtansionSeveralConservedMoments}

 In this Section, we sketch the demonstration of \Cref{thm:AcousticScaling} for any $\consmomentsnumber \geq 1$.
 For the sake of providing a quick and effective presentation of this matter, we limit ourselves to first-order in $\spacestep$.
 Select a conserved moment, which shall be indexed by $\indiceconserved \in \integerinterval{1}{\consmomentsnumber}$.
 \begin{remark}
   The operation selecting rows and columns to yield $\schememoments_{\indiceconserved}$ and $\schememomentsother_{\indiceconserved}$ from \Cref{prop:ReductionFiniteDifferenceNGeq1Old} bis does not change the orders of the expansions. 
   This is, let $\genericdiscretematrix \in \matrixspace{\velocitynumber}{\discretetimespaceoperators}$ and $\genericcontinuousmatrix = \sum_{\perturbationorderindices = 0}^{\perturbationorderindices = +\infty} \spacestep^{\perturbationorderindices} \termatorder{\genericcontinuousmatrix}{\perturbationorderindices} \in \matrixspace{\velocitynumber}{\taylorseries}$ such that $\genericdiscretematrix \asymtoticequivalence \genericcontinuousmatrix$ and $\setoflines \subset \integerinterval{1}{\velocitynumber}$ a set of indices, then
   \begin{equation*}
     \cutmatrixsquare{\genericdiscretematrix}{\setoflines} \asymtoticequivalence \cutmatrixsquare{\left (\sum_{\perturbationorderindices = 0}^{+\infty} \spacestep^{\perturbationorderindices} \termatorder{\genericcontinuousmatrix}{\perturbationorderindices} \right )}{\setoflines}  = \sum_{\perturbationorderindices = 0}^{+\infty} \spacestep^{\perturbationorderindices} \cutmatrixsquare{\left ( \termatorder{\genericcontinuousmatrix}{\perturbationorderindices} \right )}{\setoflines}.
   \end{equation*}
 \end{remark}
 Thus we have the analogous of \Cref{lemma:ExpansionResolvent}, where $\timeshiftoperator \matricial{\identity} - \schememoments_{\indiceconserved} \asymtoticequivalence \asymptotictimeshiftoperator \matricial{\identity} - \asymptoticschememoments_{\indiceconserved}$ , with
\begin{equation*}
  \asymptotictimeshiftoperator \matricial{\identity} - \asymptoticschememoments_{\indiceconserved} = \sum_{\perturbationorderindices = 0}^{+\infty} \frac{\spacestep^{\perturbationorderindices}}{ \perturbationorderindices ! } \left ( \frac{1}{\latticevelocity^{\perturbationorderindices}}\partial_t^{\perturbationorderindices} \matricial{\identity} - (-1)^{\perturbationorderindices} \cutmatrixsquare{\left ( \duboisoperatormatrix^{\perturbationorderindices} (\matricial{\identity} - \relaxationmatrix) \right )}{\{ \indiceconserved\} \cup \integerinterval{\consmomentsnumber + 1}{\velocitynumber}} \right ).
\end{equation*}
The first two term in the expansion of the inverse of the resolvent are
\begin{equation*}
  \termatorderparenthesis{ \asymptotictimeshiftoperator \matricial{\identity} - \asymptoticschememoments_{\indiceconserved}}{0} = \diagmatrix (1, \dots, 1, \relaxparletter_{\indiceconserved}, 1, \dots, 1, \relaxparletter_{\consmomentsnumber + 1}, \dots, \relaxparletter_{\velocitynumber}).
\end{equation*}
In the spirit of \Cref{rem:Invertibility} and the computations developed in \Cref{sec:DetailedProofs}, for the case $\relaxparletter_{\indiceconserved} = 0$, we introduce a regularization with $\relaxparletter_{\indiceconserved} \neq 0$ and then we pass to the limit.
Moreover
\begin{equation*}
  \termatorderparenthesis{ \asymptotictimeshiftoperator \matricial{\identity} - \asymptoticschememoments_{\indiceconserved}}{1}  = \frac{1}{\latticevelocity}\partial_t \matricial{\identity} + 
  \left(\begin{array}{@{}ccc|c|ccc||ccc@{}}
      0 & \cdots & 0 & 0 & 0 & \cdots & 0 & 0 & \cdots & 0\\
      \vdots & \ddots & \vdots & \vdots & \vdots & \ddots & \vdots & \vdots & \ddots & \vdots\\
      0 & \cdots & 0 & 0 & 0 & \cdots & 0 & 0 & \cdots & 0 \\ \hline
      0 & \cdots & 0 & (1 - \relaxparletter_{\indiceconserved}) \duboisoperatormatrixentry_{\indiceconserved \indiceconserved} & 0 & \cdots & 0 & (1-\relaxparletter_{\consmomentsnumber + 1})\duboisoperatormatrixentry_{\indiceconserved (\consmomentsnumber + 1)} & \cdots & (1-\relaxparletter_{\velocitynumber})\duboisoperatormatrixentry_{\indiceconserved \velocitynumber} \\ \hline
      0 & \cdots & 0 & 0 & 0 & \cdots & 0 & 0 & \cdots & 0\\
      \vdots & \ddots & \vdots & \vdots & \vdots & \ddots & \vdots & \vdots & \ddots & \vdots\\
      0 & \cdots & 0 & 0 & 0 & \cdots & 0 & 0 & \cdots & 0 \\ \hline \hline
      0 & \cdots & 0 & (1 - \relaxparletter_{\indiceconserved})\duboisoperatormatrixentry_{(\consmomentsnumber + 1)\indiceconserved} & 0 & \cdots & 0 & (1-\relaxparletter_{\consmomentsnumber + 1})\duboisoperatormatrixentry_{(\consmomentsnumber + 1)(\consmomentsnumber + 1)} & \cdots &  (1-\relaxparletter_{\velocitynumber})\duboisoperatormatrixentry_{(\consmomentsnumber + 1)\velocitynumber} \\
      \vdots & \ddots & \vdots & \vdots & \vdots & \ddots & \vdots & \vdots & \ddots & \vdots \\
      0 & \cdots & 0 & (1 - \relaxparletter_{\indiceconserved})\duboisoperatormatrixentry_{\velocitynumber \indiceconserved} & 0 & \cdots & 0 & (1-\relaxparletter_{\consmomentsnumber + 1})\duboisoperatormatrixentry_{\velocitynumber(\consmomentsnumber + 1)} & \cdots &  (1-\relaxparletter_{\velocitynumber})\duboisoperatormatrixentry_{\velocitynumber \velocitynumber}
  \end{array}
  \right ).
\end{equation*}

We thus have
\begin{itemize}
  \item As for the case $\consmomentsnumber = 1$ treated in detail, we have that $\lim_{\relaxparletter_{\indiceconserved} \to 0}\termatorder{ \determinant ((\asymptotictimeshiftoperator \matricial{\identity} - \asymptoticschememoments_{\indiceconserved})}{0}) = 0$.
  Using the formula for the adjugate of an upper triangular matrix, see \cite{horn2012matrix}, we have $\lim_{\relaxparletter_{\indiceconserved} \to 0}\termatorder{ \adjugate ( (\asymptotictimeshiftoperator \matricial{\identity} - \asymptoticschememoments_{\indiceconserved})}{0})  = \productrelaxation \canonicalbasisvector_{\indiceconserved} \otimes \canonicalbasisvector_{\indiceconserved}$, where in this Section $\productrelaxation \definitionequality \prod_{\ell = \consmomentsnumber + 1}^{\ell = \velocitynumber} \relaxparletter_{\ell}$.
  \item Taking $\genericmatrix = \termatorderparenthesis{ \asymptotictimeshiftoperator \matricial{\identity} - \asymptoticschememoments_{\indiceconserved}}{0} \in \lineargroup{\velocitynumber}{\reals} \subset \lineargroup{\velocitynumber}{\taylorseries} $ and $\genericmatrixtwo = \spacestep \termatorderparenthesis{ \asymptotictimeshiftoperator \matricial{\identity} - \asymptoticschememoments_{\indiceconserved}}{1} + \bigO{\spacestep^2} \in \matrixspace{\velocitynumber}{\taylorseries}$ in the Jacobi formula \eqref{eq:JacobiFormula}
  \begin{align*}
    \lim_{\relaxparletter_{\indiceconserved} \to 0}\differentialMatrixOperator{\genericmatrix}{\determinant (\genericmatrix)}{\genericmatrixtwo} &= \lim_{\relaxparletter_{\indiceconserved} \to 0} \spacestep  \productrelaxation  \Biggl ( \frac{\relaxparletter_{\indiceconserved} (\consmomentsnumber - 1)}{\latticevelocity} \partial_t + \frac{1}{\latticevelocity}\partial_t +(1-\relaxparletter_{\indiceconserved}) \duboisoperatormatrixentry_{\indiceconserved \indiceconserved}  +\sum_{\ell = \consmomentsnumber + 1}^{\velocitynumber} \frac{1}{\relaxparletter_{\ell}} \Bigl ( \frac{1}{\latticevelocity}\partial_t + (1 - \relaxparletter_{\ell}) \duboisoperatormatrixentry_{\ell \ell} \Bigr ) \Biggr ) + \bigO{\spacestep^2} \\
    &= \spacestep \productrelaxation \Bigl ( \frac{1}{\latticevelocity}\partial_t + \duboisoperatormatrixentry_{\indiceconserved \indiceconserved} \Bigr ) + \bigO{\spacestep^2}.
  \end{align*}

  To handle the term with the adjugate, observe that the first-order term is made up of the terms
  \begin{equation}\label{eq:tmp1}
    \termatorderparenthesis{\adjugate(\asymptotictimeshiftoperator \matricial{\identity} - \asymptoticschememoments_{\indiceconserved}) \asymptoticschememomentsother_{\indiceconserved}}{1} = \termatorderparenthesis{\adjugate(\asymptotictimeshiftoperator \matricial{\identity} - \asymptoticschememoments_{\indiceconserved}) }{0} \termatorderparenthesis{\asymptoticschememomentsother_{\indiceconserved}}{1} + \termatorderparenthesis{\adjugate(\asymptotictimeshiftoperator \matricial{\identity} - \asymptoticschememoments_{\indiceconserved}) }{1} \termatorderparenthesis{\asymptoticschememomentsother_{\indiceconserved}}{0},
  \end{equation}
  and in particular, we are interested in the $\indiceconserved$-th line of this matrix.
  Because of the fact that $\termatorderparenthesis{\asymptoticschememomentsother_{\indiceconserved}}{0} = \diagmatrix (1 - \relaxparletter_{1}, \dots, 1 - \relaxparletter_{\indiceconserved - 1}, 0, 1 - \relaxparletter_{\indiceconserved + 1}, \dots, 1 - \relaxparletter_{\consmomentsnumber}, 0, \dots, 0)$, the $\indiceconserved$-th line of the second term on the right hand side of \eqref{eq:tmp1} is zero, thus we do not have do study it.
  For the remaining term, it can be easily seen that 
  \begin{align*}
    \termatorderparenthesis{\asymptoticschememomentsother_{\indiceconserved} }{1}  
    = -(\matricial{\identity} - \relaxationmatrix)
    \arraycolsep=0.5pt\def\arraystretch{1.5}
    \left(\begin{array}{@{}ccc|c|ccc||ccc@{}}
        \duboisoperatormatrixentry_{11} & \cdots & \duboisoperatormatrixentry_{1(\indiceconserved - 1)} & \duboisoperatormatrixentry_{1\indiceconserved} & \duboisoperatormatrixentry_{1 (\indiceconserved + 1)} & \cdots & \duboisoperatormatrixentry_{1 \consmomentsnumber} & \duboisoperatormatrixentry_{1 (\consmomentsnumber + 1)} & \cdots & \duboisoperatormatrixentry_{1\velocitynumber}\\
        \vdots & \ddots & \vdots & \vdots & \vdots & \ddots & \vdots & \vdots & \ddots & \vdots\\
        \duboisoperatormatrixentry_{ (\indiceconserved - 1)1} & \cdots & \duboisoperatormatrixentry_{(\indiceconserved - 1)(\indiceconserved - 1)} & \duboisoperatormatrixentry_{(\indiceconserved - 1)\indiceconserved} & \duboisoperatormatrixentry_{(\indiceconserved - 1)(\indiceconserved + 1)} & \cdots & \duboisoperatormatrixentry_{(\indiceconserved - 1) \consmomentsnumber} &  \duboisoperatormatrixentry_{(\indiceconserved - 1) (\consmomentsnumber + 1)} & \cdots & \duboisoperatormatrixentry_{(\indiceconserved - 1)\velocitynumber}\\ \hline
        \duboisoperatormatrixentry_{\indiceconserved 1} & \cdots & \duboisoperatormatrixentry_{\indiceconserved (\indiceconserved - 1)}  & 0 &  \duboisoperatormatrixentry_{\indiceconserved (\indiceconserved + 1)} & \cdots & \duboisoperatormatrixentry_{\indiceconserved  \consmomentsnumber} & 0 & \cdots & 0\\ \hline
        \duboisoperatormatrixentry_{ (\indiceconserved + 1)1} & \cdots & \duboisoperatormatrixentry_{(\indiceconserved + 1)(\indiceconserved - 1)} & \duboisoperatormatrixentry_{(\indiceconserved + 1)\indiceconserved} & \duboisoperatormatrixentry_{(\indiceconserved + 1)(\indiceconserved + 1)} & \cdots & \duboisoperatormatrixentry_{(\indiceconserved + 1) \consmomentsnumber} & \duboisoperatormatrixentry_{(\indiceconserved + 1) (\consmomentsnumber + 1)} & \cdots & \duboisoperatormatrixentry_{(\indiceconserved + 1)\velocitynumber}\\
        \vdots & \ddots & \vdots & \vdots & \vdots & \ddots & \vdots & \vdots & \ddots & \vdots\\
        \duboisoperatormatrixentry_{ \consmomentsnumber 1} & \cdots & \duboisoperatormatrixentry_{\consmomentsnumber(\indiceconserved - 1)} & \duboisoperatormatrixentry_{\consmomentsnumber\indiceconserved} & \duboisoperatormatrixentry_{\consmomentsnumber(\indiceconserved + 1)} & \cdots & \duboisoperatormatrixentry_{\consmomentsnumber \consmomentsnumber} &  \duboisoperatormatrixentry_{\consmomentsnumber (\consmomentsnumber + 1)} & \cdots & \duboisoperatormatrixentry_{\consmomentsnumber\velocitynumber} \\ \hline \hline
        \duboisoperatormatrixentry_{(\consmomentsnumber + 1) 1} & \cdots & \duboisoperatormatrixentry_{(\consmomentsnumber + 1)(\indiceconserved - 1)}  & 0 & \duboisoperatormatrixentry_{(\consmomentsnumber+1)(\indiceconserved + 1)} & \cdots & \duboisoperatormatrixentry_{(\consmomentsnumber+1) \consmomentsnumber} & 0 & \cdots & 0 \\
        \vdots & \ddots & \vdots & \vdots & \vdots & \ddots & \vdots & \vdots & \ddots & \vdots \\
        \duboisoperatormatrixentry_{\velocitynumber 1} & \cdots & \duboisoperatormatrixentry_{\velocitynumber(\indiceconserved - 1)}  & 0 & \duboisoperatormatrixentry_{\velocitynumber(\indiceconserved + 1)} & \cdots & \duboisoperatormatrixentry_{\velocitynumber \consmomentsnumber} & 0 & \cdots & 0 
    \end{array}
    \right ),
\end{align*}
thus we deduce that 
\begin{equation*}
  \left (\termatorderparenthesis{\adjugate(\asymptotictimeshiftoperator \matricial{\identity} - \asymptoticschememoments_{\indiceconserved}) \asymptoticschememomentsother_{\indiceconserved}}{1} \right )_{\indiceconserved, \cdot} = -\productrelaxation \left ( (1 - \relaxparletter_{1})\duboisoperatormatrixentry_{\indiceconserved 1}, \dots, (1 - \relaxparletter_{\indiceconserved - 1})\duboisoperatormatrixentry_{\indiceconserved  (\indiceconserved - 1)}, 0, (1 - \relaxparletter_{\indiceconserved + 1})\duboisoperatormatrixentry_{\indiceconserved  (\indiceconserved + 1)}, \dots, (1 - \relaxparletter_{\consmomentsnumber}) \duboisoperatormatrixentry_{\indiceconserved \consmomentsnumber}, 0, \dots, 0\right ).
\end{equation*}
Dealing with the zero and first order term in $\adjugate(\asymptotictimeshiftoperator \matricial{\identity} - \asymptoticschememoments_{\indiceconserved}) \asymptoticschemeequil$ works the same than $\consmomentsnumber = 1$, thus we do not repeat it.
Moreover, these terms allow for the compensation of the dependence on the choice of the relaxation parameter of the other conserved moments $\relaxparletter_{1}, \cdots, \relaxparletter_{\indiceconserved - 1}, \relaxparletter_{\indiceconserved + 1}, \dots, \relaxparletter_{\consmomentsnumber}$ in the previous equation, as claimed in \Cref{sec:InvarianceChoiceRelaxationParamtersConserved}, thanks to \eqref{eq:EquilibriumConservedMoments}.
\end{itemize}

Putting all the previously discussed facts together into the truncated \eqref{eq:DevelopedEquation} yields
\begin{equation*}
  \spacestep \frac{\productrelaxation}{\latticevelocity} \Biggl (\partial_t \momentletter_{\indiceconserved} + \latticevelocity \duboisoperatormatrixentry_{\indiceconserved \indiceconserved} \momentletter_{\indiceconserved} + \latticevelocity \sum_{\substack{\indicescolumns = 1 \\ \indicescolumns \neq \indiceconserved}}^{\consmomentsnumber} \duboisoperatormatrixentry_{\indiceconserved \indicescolumns} \momentletter_{\indicescolumns} + \latticevelocity\sum_{\indicescolumns = \consmomentsnumber + 1}^{\velocitynumber} \duboisoperatormatrixentry_{\indiceconserved \indicescolumns} \momentletter_{\indicescolumns}^{\atequilibrium} \Biggr ) = \bigO{\spacestep^2},
\end{equation*}
which is the result from \Cref{thm:AcousticScaling} for $\consmomentsnumber \geq 1$ at dominant order.
The next order is demonstrated in the same way.

\section{Link with the existing approaches}\label{sec:LinkExistingApproaches}

To finish our contribution, we briefly sketch the links with previous works on the target PDEs and modified equations like \cite{yong2016theory} and \cite{dubois2008equivalent, dubois2019nonlinear}.
A more complete study shall be the object of future investigations.

\subsection{Equivalent equations}

Our result \Cref{thm:AcousticScaling} coincides with the analogous result in \cite{dubois2019nonlinear} up to second order.
The substantial difference is that we apply the Taylor expansions to the solution of the corresponding \fd scheme given either by \Cref{prop:ReductionFiniteDifferenceGeneralOld} bis or \Cref{prop:ReductionFiniteDifferenceNGeq1Old} bis, where non-conserved moments have been removed.
We therefore reasonably conjecture that the obtained macroscopic equations coincide at any order. 
The mathematical justification of this conjecture shall be the object of future investigations.

The quasi-equilibrium,  which is extensively used in \cite{dubois2019nonlinear} can be somehow recovered in our previous discussion.
Let $\consmomentsnumber = 1$ to fix ideas.
In the proof of \Cref{prop:ReductionFiniteDifferenceGeneralOld} bis, nothing prevents us from selecting, instead of the first row, the $\indicemoments \in \integerinterval{2}{\velocitynumber}$ row, corresponding to a non-conserved moment.
This is
\begin{equation}\label{eq:FDSchemeNonConservedMoments}
  \determinant(\timeshiftoperator \matricial{\identity} - \schememoments)\momentletter_{\indicemoments} = \left ( \adjugate(\timeshiftoperator \matricial{\identity} - \schememoments) \schemeequil \vectorial{\momentletter}^{\atequilibrium} \right )_{\indicemoments}.
\end{equation}
Let us stress that even if this could seem to be a viable \fd scheme for the non-conserved variable $\momentletter_{\indicemoments}$, it is not independent from the conserved moment $\momentletter_{1}$ the equilibria depend on and furthermore, this formulation certainly depends on the choice of $\relaxparletter_{1}$, the relaxation parameter of the conserved moment. This is somehow unwanted since $\relaxparletter_{1}$ is \emph{in fine} not present in the original \lbm scheme.
From the computations of \Cref{sec:DetailedProofs}, we see that 
\begin{equation*}
  \determinant(\asymptotictimeshiftoperator \matricial{\identity} - \asymptoticschememoments) = \relaxparletter_{1} \productrelaxation + \bigO{\spacestep}, \qquad \adjugate(\asymptotictimeshiftoperator \matricial{\identity} - \asymptoticschememoments) =  \productrelaxation \diagmatrix \Biggl (1, \frac{\relaxparletter_{1} }{\relaxparletter_{2}}, \dots, \frac{\relaxparletter_{1} }{\relaxparletter_{\velocitynumber}} \Biggr ) + \bigO{\spacestep}, \qquad \asymptoticschemeequil = \relaxationmatrix + \bigO{\spacestep}.
\end{equation*}
Using the asymptotic equivalents truncated at leading order in \eqref{eq:FDSchemeNonConservedMoments} thus provides
\begin{equation*}
  \relaxparletter_{1} \productrelaxation \momentletter_{\indicemoments} + \bigO{\spacestep} = \relaxparletter_{1} \productrelaxation  \momentletter_{\indicemoments}^{\atequilibrium} + \bigO{\spacestep}, \qquad \text{hence also} \qquad \momentletter_{\indicemoments} = \momentletter_{\indicemoments}^{\atequilibrium} + \bigO{\spacestep},
\end{equation*}
provided that $\relaxparletter_{1} \neq 0$.
This is the quasi-equilibrium of the non-conserved moments, which is re-injected in the \lbm schemes to eliminate them in the procedure by \cite{dubois2019nonlinear}.
The previous procedure is formal because there is no guarantee that the discrete non-conserved moments $\momentletter_{\indicemoments}$ for $\indicemoments \in \integerinterval{2}{\velocitynumber}$ in the scheme originate from the point-wise values of a smooth function.

\subsection{Maxwell iteration}
In \cite{yong2016theory}, the computations have been carried only for the \scheme{2}{9} scheme by \cite{lallemand2000theory} with $\consmomentsnumber = 3$, which we have presented in \Cref{ex:D2Q9N3Acoustic}.
In this part of our work, we are first going to develop the computations until third-order for any \lbm scheme under acoustic scaling, \emph{i.e.} \Cref{ass:Acoustic}.
Then, we are going to demonstrate that the modified equations obtained by the Maxwell iteration \cite{yong2016theory} and the one from the corresponding \fd schemes are the same at any order, regardless of the time-space scaling.
In this Section, it is crucial to assume that $\relaxationmatrix \in \lineargroup{\velocitynumber}{\reals}$.
Observe that this assumption ensures that $\determinant(\asymptotictimeshiftoperator \matricial{\identity} - \asymptoticschememoments)$ is a unit (invertible) in the ring $\taylorseries$ or equivalently that $\asymptotictimeshiftoperator \matricial{\identity} - \asymptoticschememoments$ belongs to $\lineargroup{\velocitynumber}{\taylorseries}$.
The Maxwell iteration \cite{yong2016theory} at step $\indicesmaxwelliteration \in \naturals$ reads, after simple computations
\begin{equation}\label{eq:DefinitionMaxwellIteration}
  \ordermaxwelliteration{\vectorial{\momentletter}}{\indicesmaxwelliteration} = \Biggl (\sum_{\perturbationorderindices = 0}^{k} (-\relaxationmatrix^{-1} (\asymptotictimeshiftoperator \conj{\asymptoticstreammoments} - \matricial{\identity}))^{\perturbationorderindices}\Biggr ) \vectorial{\momentletter}^{\atequilibrium},
\end{equation}
where the quasi-equilibrium is encoded in the choice $\ordermaxwelliteration{\vectorial{\momentletter}}{0} =  \vectorial{\momentletter}^{\atequilibrium}$  and where we have taken, as for \eqref{eq:ExpansionsStreamMatrix}
\begin{equation*}
  \conj{\streammoments} \definitionequality \momentsmatrix \diagmatrix(\shiftoperator{-\normalizedvelocityletter_1}, \dots, \shiftoperator{-\normalizedvelocityletter_{\velocitynumber}}) \momentsmatrix^{-1} \asymtoticequivalence \momentsmatrix \Biggl ( \sum_{\multiindicemodule{\multiindice} \geq 0} \frac{\spacestep^{\multiindicemodule{\multiindice}}}{\multiindice!}\diagmatrix \left (\vectorial{\vectorial{\normalizedvelocityletter}_1}^{\multiindice}, \dots, \vectorial{\vectorial{\normalizedvelocityletter}_{\velocitynumber}}^{\multiindice}\right ) \multiindicederivative{\multiindice} \Biggr ) \momentsmatrix^{-1} =: \conj{\asymptoticstreammoments} \in \matrixspace{\velocitynumber}{\taylorseries}.
\end{equation*}
It is easy to see that $\asymptoticstreammoments \conj{\asymptoticstreammoments} = \conj{\asymptoticstreammoments}\asymptoticstreammoments = \matricial{\identity}$ and moreover, in analogy with \Cref{lemma:ExpansionResolvent}
\begin{equation}\label{eq:ExpansionForMaxwell}
  \asymptotictimeshiftoperator \conj{\asymptoticstreammoments} - \matricial{\identity} = \spacestep \Bigl ( \frac{1}{\latticevelocity} \partial_t \matricial{\identity} + \duboisoperatormatrix \Bigr ) + \frac{\spacestep^2}{2} \Bigr ( \frac{1}{\latticevelocity^2} \partial_{tt}  \matricial{\identity}  + \frac{2}{\latticevelocity} \duboisoperatormatrix \partial_t + \duboisoperatormatrix^{2} \Bigr ) + \bigO{\spacestep^3}.
\end{equation}
The Maxwell iteration works by assuming that $\vectorial{\momentletter} = \ordermaxwelliteration{\vectorial{\momentletter}}{\indicesmaxwelliteration} + \bigO{\spacestep^{\indicesmaxwelliteration + 1}}$.
Taking $\indicesmaxwelliteration = 1$ in \eqref{eq:DefinitionMaxwellIteration} and using \eqref{eq:ExpansionForMaxwell}, we have
\begin{align*}
  \vectorial{\momentletter} = \vectorial{\momentletter}^{\atequilibrium} -\relaxationmatrix^{-1}   \spacestep \Bigl ( \frac{1}{\latticevelocity} \partial_t \matricial{\identity} + \duboisoperatormatrix \Bigr )\vectorial{\momentletter}^{\atequilibrium} + \bigO{\spacestep^2}.
\end{align*}
Let $\indiceconserved \in \integerinterval{1}{\consmomentsnumber}$, then taking advantage of \eqref{eq:EquilibriumConservedMoments}
\begin{equation*}
  \momentletter_{\indiceconserved} = \momentletter_{\indiceconserved} -  \spacestep \frac{1}{ \relaxparletter_{\indiceconserved}} \Biggl ( \frac{1}{\latticevelocity} \partial_t \momentletter_{\indiceconserved}  + \sum_{\indicescolumns = 1}^{\consmomentsnumber} \duboisoperatormatrixentry_{\indiceconserved \indicescolumns}  \momentletter_{\indicescolumns} + \sum_{\indicescolumns = \consmomentsnumber + 1}^{\velocitynumber} \duboisoperatormatrixentry_{1\indicescolumns} \momentletter_{\indicescolumns}^{\atequilibrium} \Biggr )+ \bigO{\spacestep^2},
\end{equation*}
which upon division, is the same result than \Cref{thm:AcousticScaling}.
Going up to order two considering $\indicesmaxwelliteration = 2$, we have
\begin{align*}
  \vectorial{\momentletter} = \vectorial{\momentletter}^{\atequilibrium} &-\relaxationmatrix^{-1}   \spacestep \Bigl ( \frac{1}{\latticevelocity} \partial_t \matricial{\identity} + \duboisoperatormatrix \Bigr )\vectorial{\momentletter}^{\atequilibrium}  \\
  &+ \frac{\spacestep^2}{2} \relaxationmatrix^{-1} \Biggl (  \frac{1}{\latticevelocity^2}(2 \relaxationmatrix^{-1} - \matricial{\identity}) \partial_{tt} +  \frac{2}{\latticevelocity} (\relaxationmatrix^{-1}\duboisoperatormatrix +\duboisoperatormatrix \relaxationmatrix^{-1} - \duboisoperatormatrix )\partial_t + \duboisoperatormatrix (2\relaxationmatrix^{-1} - \matricial{\identity})\duboisoperatormatrix \Biggr ) \vectorial{\momentletter}^{\atequilibrium} + \bigO{\spacestep^3}.
\end{align*}
Once more, selecting the $\indiceconserved$-th row provides
\begin{align*}
  \momentletter_{\indiceconserved} = \momentletter_{\indiceconserved} &- \spacestep\frac{1}{\relaxparletter_{\indiceconserved}} \Biggl ( \frac{1}{\latticevelocity}\partial_t \momentletter_{\indiceconserved}  + \frac{1}{\latticevelocity}\gammaDubois{1}{\indiceconserved} \Biggl ) + \spacestep^2\frac{1}{\relaxparletter_{\indiceconserved}} \Biggl ( \frac{1}{\latticevelocity^2}\Biggl (\frac{1}{\relaxparletter_{\indiceconserved}} - \frac{1}{2} \Biggr )\partial_{tt} \momentletter_{\indiceconserved}  + \frac{1}{\latticevelocity}\sum_{\indicescolumns = 1}^{\velocitynumber} \Biggl ( \frac{1}{\relaxparletter_{\indiceconserved}} + \frac{1}{\relaxparletter_{\indicescolumns}} - 1 \Biggr )\duboisoperatormatrixentry_{\indiceconserved \indicescolumns} \partial_t \momentletter_{\indicescolumns}^{\atequilibrium} + \sum_{\indicescolumns = 1}^{\velocitynumber} \sum_{\ell = 1}^{\velocitynumber}\Biggl (\frac{1}{\relaxparletter_{\ell}} - \frac{1}{2} \Biggr ) \duboisoperatormatrixentry_{\indiceconserved \ell} \duboisoperatormatrixentry_{\ell\indicescolumns}  \momentletter_{\indicescolumns}^{\atequilibrium}\Biggr )   \Biggr ) \\
  &+ \bigO{\spacestep^3}.
\end{align*}
Using relations analogous to \eqref{eq:DerivationPreviousOrder}, \eqref{eq:DerivativeTimeEquilibria} and \eqref{eq:DerivationPreviousOrderTwice} for $\consmomentsnumber \geq 1$, formally obtained by differentiating the result at the previous order , we finally obtain, after tedious but elementary computations
\begin{align*}
  \momentletter_{\indiceconserved} = \momentletter_{\indiceconserved} -  \frac{\spacestep}{\latticevelocity \relaxparletter_{\indiceconserved}} \Biggl ( \partial_t \momentletter_{\indiceconserved}  + \gammaDubois{1}{\indiceconserved} -  \latticevelocity\spacestep \sum_{\indicescolumns = \consmomentsnumber + 1}^{\velocitynumber} \Biggl (\frac{1}{\relaxparletter_{\indicescolumns}} - \frac{1}{2} \Biggr ) \duboisoperatormatrixentry_{\indiceconserved \indicescolumns} \Biggl ( \sum_{\ell = 1}^{\consmomentsnumber}\duboisoperatormatrixentry_{\indicescolumns \ell} \momentletter_{\ell} + \sum_{\ell = \consmomentsnumber + 1}^{\velocitynumber} \duboisoperatormatrixentry_{\indicescolumns \ell} \momentletter_{\ell}^{\atequilibrium} - \frac{1}{\latticevelocity} \sum_{\ell = 1}^{\consmomentsnumber} \straightderivative{\momentletter_{\indicescolumns}^{\atequilibrium}}{\momentletter_{\ell}} \gammaDubois{1}{\ell} \Biggr )   \Biggr )+ \bigO{\spacestep^3},
\end{align*}
which coincides with the result from \Cref{thm:AcousticScaling}.
Therefore, up to order two, our approach yields results consistent with those from the procedure by \cite{yong2016theory}.

To demonstrate that we recover the same result at any order for any scaling between time and space discretizations, let us assume $\consmomentsnumber = 1$.
Then we have, using that $\relaxationmatrix \in \lineargroup{\velocitynumber}{\reals}$, $\asymptoticstreammoments \conj{\asymptoticstreammoments} = \conj{\asymptoticstreammoments}\asymptoticstreammoments = \matricial{\identity}$, the rule for the inverse of a product of matrix and the identity relative to geometric series in the context of formal power series, that
\begin{align*}
  \matricial{0} &= \determinant (\asymptotictimeshiftoperator \matricial{\identity} - \asymptoticschememoments) \vectorial{\momentletter} - \adjugate (\asymptotictimeshiftoperator \matricial{\identity} - \asymptoticschememoments) \asymptoticschemeequil \vectorial{\momentletter}^{\atequilibrium} = \determinant (\asymptotictimeshiftoperator \matricial{\identity} - \asymptoticschememoments) \left ( \vectorial{\momentletter} -  (\asymptotictimeshiftoperator \matricial{\identity} - \asymptoticstreammoments (\matricial{\identity} - \relaxationmatrix))^{-1} \asymptoticstreammoments \relaxationmatrix \vectorial{\momentletter}^{\atequilibrium}\right ), \\
  &=\determinant (\asymptotictimeshiftoperator \matricial{\identity} - \asymptoticschememoments) \left ( \vectorial{\momentletter} -  (\relaxationmatrix^{-1}\conj{\asymptoticstreammoments}(\asymptotictimeshiftoperator \matricial{\identity} - \asymptoticstreammoments (\matricial{\identity} - \relaxationmatrix)))^{-1} \vectorial{\momentletter}^{\atequilibrium}\right ) =\determinant (\asymptotictimeshiftoperator \matricial{\identity} - \asymptoticschememoments) \left ( \vectorial{\momentletter} - (\matricial{\identity} + \relaxationmatrix^{-1}(\asymptotictimeshiftoperator \conj{\asymptoticstreammoments} - \matricial{\identity}))^{-1} \vectorial{\momentletter}^{\atequilibrium}\right ), \\
  &=\determinant (\asymptotictimeshiftoperator \matricial{\identity} - \asymptoticschememoments) \Biggl ( \vectorial{\momentletter} - \Biggl (\sum_{\perturbationorderindices = 0}^{+\infty} (-\relaxationmatrix^{-1}(\asymptotictimeshiftoperator \conj{\asymptoticstreammoments} - \matricial{\identity}))^{\perturbationorderindices} \Biggr ) \vectorial{\momentletter}^{\atequilibrium}\Biggr ) =\determinant (\asymptotictimeshiftoperator \matricial{\identity} - \asymptoticschememoments) \Biggl ( \vectorial{\momentletter} - \lim_{\indicesmaxwelliteration \to +\infty} \ordermaxwelliteration{\vectorial{\momentletter}}{\indicesmaxwelliteration} \Biggr ).
\end{align*}
Therefore the expansion of the \fd scheme from \Cref{prop:ReductionFiniteDifferenceGeneralOld} bis and the non-truncated Maxwell iteration method on the \lbm scheme coincide up to a multiplication by a formal power series of time-space differential operators, \emph{i.e.} $\determinant (\asymptotictimeshiftoperator \matricial{\identity} - \asymptoticschememoments) \in \taylorseries$.
\emph{A priori}, the resulting modified equations are not the same, but since $\determinant (\asymptotictimeshiftoperator \matricial{\identity} - \asymptoticschememoments) = \determinant(\relaxationmatrix) + \bigO{\spacestep} = \relaxparletter_{1} \productrelaxation + \bigO{\spacestep}$, thus we ``pay'' only a constant factor we can divide by at dominant order, the modified equations at leading order are the same. Then, at each order, the result must be the same because we re-inject, in a recursive fashion, the solution truncated at the previous order to eliminate the higher-order time derivatives, see for instance \eqref{eq:DerivativeTimeEquilibria} and \eqref{eq:DerivationPreviousOrderTwice}.
The fact that the modified equations recovered by the Maxwell iteration are the same than the ones from the corresponding \fd scheme at any order provides an \emph{a posteriori} justification of the Maxwell iteration. We also emphasize that using the Maxwell iteration to compute these equations is generally less involved in terms of computations than doing the same on the corresponding \fd schemes.

\section{Conclusions and perspectives}\label{sec:Conclusions}

In this paper, we have rigorously derived the target PDEs for any \lbm scheme under acoustic and diffusive scalings by restating it as a multi-step macroscopic \fd scheme on the conserved moments \cite{bellotti2021fd}.
Moreover, the modified equations -- which the schemes are ``more consistent''  with -- have been found up to second order.
These findings allow to utilize -- upon studying the stability \cite{bellotti2021fd} of the \lbm scheme at hand -- the Lax equivalence theorem \cite{lax1956survey} to conclude on its convergence and order of convergence towards the solution of the target PDEs.
Since the passage from the kinetic to the macroscopic standpoint is fully discrete, our analysis can handle any type of time-space scaling and be pushed forward to reach higher orders in the discretization parameters.
Contrarily to the existing techniques, the quasi-equilibrium of the non-conserved moments in the limit of small discretization parameters or the introduction of several time scales in the problem are not the keys to eliminate the non-conserved variables from the macroscopic equations.
The obtained results confirm, going beyond empirical evidence, that the formal Taylor expansion by \cite{dubois2008equivalent,dubois2019nonlinear} and the Maxwell iteration by \cite{yong2016theory} are well-grounded from the perspective of numerical analysts and traditional numerical methods for PDEs, such as Finite Difference.
In particular, we have extended the Maxwell iteration \cite{yong2016theory} to any \lbm scheme and shown that the modified equations found by this procedure are the same than the ones from the corresponding \fd schemes, at any order.
The general results that we have presented allow to immediately recover the modified equations without need for computing the corresponding \fd schemes, which would be time consuming. This allows -- for example -- to easily consider families of schemes depending on some parameters and investigate the dependence of the modified equations on these factors.

An improvement of the present work could be the establishment of the equivalence between different analyses \cite{chen1998lattice, qian2000higher, lallemand2000theory, dubois2008equivalent,dubois2019nonlinear, junk2003rigorous, junk2005asymptotic, junk2009convergence} for higher orders and ideally for any order.
Even if more involved from the standpoint of computations, the extension can be easily done by considering derivatives of higher order for the determinant and adjugate functions, in the spirit of \Cref{lemma:DerivativesDeterminant} and \Cref{lemma:DerivativesAjugate}. 
In this work, all the computations have been done by hand but one could envision to seek some help from symbolic computations.
This is a current path of investigation which final aim is to provide the computation -- inside the package \texttt{pylbm}\footnote{\url{https://pylbm.readthedocs.io}}  -- of the modified equations of any \lbm scheme either by the corresponding \fd scheme or using the Maxwell iteration.

\section*{Acknowledgments}
The author deeply thanks his PhD advisors, M. Massot and B. Graille, for the fruitful discussions and advice on the subject, his brother P. Bellotti for the useful tips to improve the style of manuscript and S. Simonis for having read and commented the preprint of this paper.
The author also thanks the two anonymous referees for the valuable questions and suggestions.
The author is supported by a PhD funding (year 2019) from the Ecole polytechnique.

\bibliographystyle{acm}
\bibliography{biblio}

\begin{thebibliography}{10}

\bibitem{allaire2007numerical}
{\sc Allaire, G.}
\newblock {\em Numerical analysis and optimization: an introduction to
  mathematical modelling and numerical simulation}.
\newblock Oxford University Press, 2007.

\bibitem{bellotti2021high}
{\sc Bellotti, T., Gouarin, L., Graille, B., and Massot, M.}
\newblock {High accuracy analysis of adaptive multiresolution-based lattice
  Boltzmann schemes via the equivalent equations}.
\newblock {\em The SMAI Journal of Computational Mathematics 8\/} (2022),
  161--199.

\bibitem{bellotti2021fd}
{\sc Bellotti, T., Graille, B., and Massot, M.}
\newblock Finite difference formulation of any lattice {B}oltzmann scheme.
\newblock {\em {Numerische Mathematik} 152\/} (2022), 1--40.

\bibitem{boghosian2018curious}
{\sc Boghosian, B., Dubois, F., Graille, B., Lallemand, P., and Tekitek, M.-M.}
\newblock Curious convergence properties of lattice boltzmann schemes for
  diffusion with acoustic scaling.
\newblock {\em {Communications in Computational Physics} 23\/} (2018),
  1263--1278.

\bibitem{bouchut2000diffusive}
{\sc Bouchut, F., Guarguaglini, F.~R., and Natalini, R.}
\newblock Diffusive {BGK} approximations for nonlinear multidimensional
  parabolic equations.
\newblock {\em {Indiana University Mathematics Journal}\/} (2000), 723--749.

\bibitem{brewer1986linear}
{\sc Brewer, J.~W., Bunce, J.~W., and Van~Vleck, F.~S.}
\newblock {\em Linear systems over commutative rings}.
\newblock CRC Press, 1986.

\bibitem{caiazzo2009comparison}
{\sc Caiazzo, A., Junk, M., and Rheinl{\"a}nder, M.}
\newblock Comparison of analysis techniques for the lattice {B}oltzmann method.
\newblock {\em {Computers \& Mathematics with Applications} 58}, 5 (2009),
  883--897.

\bibitem{carpentier1997derivation}
{\sc Carpentier, R., de~La~Bourdonnaye, A., and Larrouturou, B.}
\newblock On the derivation of the modified equation for the analysis of linear
  numerical methods.
\newblock {\em ESAIM: Mathematical Modelling and Numerical Analysis 31}, 4
  (1997), 459--470.

\bibitem{chapman1990mathematical}
{\sc Chapman, S., and Cowling, T.~G.}
\newblock {\em The mathematical theory of non-uniform gases: an account of the
  kinetic theory of viscosity, thermal conduction and diffusion in gases}.
\newblock Cambridge university press, 1990.

\bibitem{chen1998lattice}
{\sc Chen, S., and Doolen, G.~D.}
\newblock Lattice {B}oltzmann method for fluid flows.
\newblock {\em {Annual Review of Fluid Mechanics} 30}, 1 (1998), 329--364.

\bibitem{cheng2003partial}
{\sc Cheng, S.~S.}
\newblock {\em Partial difference equations}, vol.~3.
\newblock CRC Press, 2003.

\bibitem{dellacherie2014construction}
{\sc Dellacherie, S.}
\newblock Construction and analysis of lattice {B}oltzmann methods applied to a
  {1D} convection-diffusion equation.
\newblock {\em {Acta Applicandae Mathematicae} 131}, 1 (2014), 69--140.

\bibitem{dhumieres1992}
{\sc D'Humi\`eres, D.}
\newblock {\em Generalized {L}attice-{B}oltzmann {E}quations}.
\newblock American Institute of Aeronautics and Astronautics, Inc., 1992,
  pp.~450--458.

\bibitem{ding2007eigenvalues}
{\sc Ding, J., and Zhou, A.}
\newblock Eigenvalues of rank-one updated matrices with some applications.
\newblock {\em {Applied Mathematics Letters} 20}, 12 (2007), 1223--1226.

\bibitem{dubois2008equivalent}
{\sc Dubois, F.}
\newblock Equivalent partial differential equations of a lattice {B}oltzmann
  scheme.
\newblock {\em {Computers \& Mathematics with Applications} 55}, 7 (2008),
  1441--1449.

\bibitem{dubois2019hal}
{\sc Dubois, F.}
\newblock {General third order {C}hapman-{E}nskog expansion of lattice
  {B}oltzmann schemes}.
\newblock In {\em {16th International Conference for Mesoscopic Methods in
  Engineering and Science, Edinburgh, 22--26 July 2019.}\/} (Edimburgh, United
  Kingdom, July 2019).

\bibitem{dubois2019nonlinear}
{\sc Dubois, F.}
\newblock Nonlinear fourth order {T}aylor expansion of lattice {B}oltzmann
  schemes.
\newblock {\em {Asymptotic Analysis}}, 1 Jan. 2021 (2021), 1--41.

\bibitem{dubois2020notion}
{\sc Dubois, F., Graille, B., and Rao, S.~R.}
\newblock A notion of non-negativity preserving relaxation for a
  mono-dimensional three velocities scheme with relative velocity.
\newblock {\em {Journal of Computational Science} 47\/} (2020), 101181.

\bibitem{dubois2009towards}
{\sc Dubois, F., and Lallemand, P.}
\newblock Towards higher order lattice {B}oltzmann schemes.
\newblock {\em {Journal of Statistical Mechanics: Theory and Experiment} 2009},
  06 (2009), P06006.

\bibitem{dubois2011quartic}
{\sc Dubois, F., and Lallemand, P.}
\newblock Quartic parameters for acoustic applications of lattice {B}oltzmann
  scheme.
\newblock {\em {Computers \& Mathematics with Applications} 61}, 12 (2011),
  3404--3416.

\bibitem{fevrier2014extension}
{\sc F{\'e}vrier, T.}
\newblock {\em Extension et analyse des sch{\'e}mas de Boltzmann sur
  r{\'e}seau: les sch{\'e}mas {\`a} vitesse relative}.
\newblock PhD thesis, Universit{\'e} Paris Sud-Paris XI, 2014.

\bibitem{fuvcik2021equivalent}
{\sc Fu{\v{c}}{\'\i}k, R., and Straka, R.}
\newblock Equivalent finite difference and partial differential equations for
  the lattice {B}oltzmann method.
\newblock {\em {Computers \& Mathematics with Applications} 90\/} (2021),
  96--103.

\bibitem{guo2013lattice}
{\sc Guo, Z., and Shu, C.}
\newblock {\em Lattice {B}oltzmann method and its application in engineering},
  vol.~3.
\newblock World Scientific, 2013.

\bibitem{gustafsson1995time}
{\sc Gustafsson, B., Kreiss, H.-O., and Oliger, J.}
\newblock {\em Time dependent problems and difference methods}, vol.~24.
\newblock John Wiley \& Sons, 1995.

\bibitem{henon1987viscosity}
{\sc H{\'e}non, M.}
\newblock Viscosity of a lattice gas.
\newblock {\em {Lattice Gas Methods for Partial Differential Equations}\/}
  (1987), 179--207.

\bibitem{higuera1989boltzmann}
{\sc Higuera, F.~J., and Jim{\'e}nez, J.}
\newblock {B}oltzmann approach to lattice gas simulations.
\newblock {\em {EPL (Europhysics Letters)} 9}, 7 (1989), 663.

\bibitem{horn2012matrix}
{\sc Horn, R.~A., and Johnson, C.~R.}
\newblock {\em Matrix analysis}.
\newblock Cambridge university press, 2012.

\bibitem{huang1987}
{\sc Huang, K.}
\newblock {\em Statistical {M}echanics}, 2~ed.
\newblock John Wiley \& Sons, 1987.

\bibitem{johnson2002curious}
{\sc Johnson, W.~P.}
\newblock The curious history of {F}a{\`a} di {B}runo's formula.
\newblock {\em {The American Mathematical Monthly} 109}, 3 (2002), 217--234.

\bibitem{junk2005asymptotic}
{\sc Junk, M., Klar, A., and Luo, L.-S.}
\newblock Asymptotic analysis of the lattice {B}oltzmann equation.
\newblock {\em {Journal of Computational Physics} 210}, 2 (2005), 676--704.

\bibitem{junk2009convergence}
{\sc Junk, M., and Yang, Z.}
\newblock Convergence of lattice {B}oltzmann methods for {N}avier--{S}tokes
  flows in periodic and bounded domains.
\newblock {\em {Numerische Mathematik} 112}, 1 (2009), 65--87.

\bibitem{junk2003rigorous}
{\sc Junk, M., and Yong, W.-A.}
\newblock Rigorous {N}avier--{S}tokes limit of the lattice {B}oltzmann
  equation.
\newblock {\em {Asymptotic Analysis} 35}, 2 (2003), 165--185.

\bibitem{jury1964theory}
{\sc Jury, E.~I.}
\newblock {\em Theory and {A}pplication of the z-{T}ransform {M}ethod}.
\newblock Krieger Publishing Co., 1964.

\bibitem{kassel95quantum}
{\sc Kassel, C.}
\newblock {\em Quantum {G}roups}, 1~ed.
\newblock Graduate Texts in Mathematics. Springer-Verlag New York, 1995.

\bibitem{kruger2017lattice}
{\sc Kr{\"u}ger, T., Kusumaatmaja, H., Kuzmin, A., Shardt, O., Silva, G., and
  Viggen, E.~M.}
\newblock The lattice {B}oltzmann method.
\newblock {\em {Springer International Publishing} 10}, 978-3 (2017).

\bibitem{lallemand2000theory}
{\sc Lallemand, P., and Luo, L.-S.}
\newblock Theory of the lattice {B}oltzmann method: {D}ispersion, dissipation,
  isotropy, {G}alilean invariance, and stability.
\newblock {\em {Physical Review E} 61}, 6 (2000), 6546.

\bibitem{lang02algebra}
{\sc Lang, S.}
\newblock {\em Algebra}, 3~ed.
\newblock Graduate Texts in Mathematics. Springer-Verlag New York, 2002.

\bibitem{lax1956survey}
{\sc Lax, P.~D., and Richtmyer, R.~D.}
\newblock Survey of the stability of linear finite difference equations.
\newblock {\em Communications on pure and applied mathematics 9}, 2 (1956),
  267--293.

\bibitem{mcnamara1988use}
{\sc McNamara, G.~R., and Zanetti, G.}
\newblock Use of the {B}oltzmann equation to simulate lattice-gas automata.
\newblock {\em {Physical Review Letters} 61}, 20 (1988), 2332.

\bibitem{miller1960finitedifference}
{\sc Miller, K.~S.}
\newblock {\em An Introduction to the {C}alculus of {F}inite {D}ifferences and
  {D}ifference {E}quations}.
\newblock Dover Publications, 1960.

\bibitem{monforte2013formal}
{\sc Monforte, A.~A., and Kauers, M.}
\newblock Formal {L}aurent series in several variables.
\newblock {\em {Expositiones Mathematicae} 31}, 4 (2013), 350--367.

\bibitem{niven1969formal}
{\sc Niven, I.}
\newblock Formal power series.
\newblock {\em {The American Mathematical Monthly} 76}, 8 (1969), 871--889.

\bibitem{qian2000higher}
{\sc Qian, Y.-H., and Zhou, Y.}
\newblock Higher-order dynamics in lattice-based models using the
  {C}hapman-{E}nskog method.
\newblock {\em {Physical Review E} 61}, 2 (2000), 2103.

\bibitem{rheinlander2007analysis}
{\sc Rheinl{\"a}nder, M.~K.}
\newblock {\em Analysis of lattice-{B}oltzmann methods: asymptotic and numeric
  investigation of a singularly perturbed system}.
\newblock PhD thesis, 2007.

\bibitem{roman2005umbral}
{\sc Roman, S.}
\newblock {\em The {U}mbral {C}alculus}.
\newblock Dover Publications, 2005.

\bibitem{simonis2020relaxation}
{\sc Simonis, S., Frank, M., and Krause, M.~J.}
\newblock On relaxation systems and their relation to discrete velocity
  {B}oltzmann models for scalar advection--diffusion equations.
\newblock {\em Philosophical Transactions of the Royal Society A 378}, 2175
  (2020), 20190400.

\bibitem{stewart1998adjugate}
{\sc Stewart, G.}
\newblock On the adjugate matrix.
\newblock {\em {Linear Algebra and its Applications} 283}, 1-3 (1998),
  151--164.

\bibitem{strikwerda2004finite}
{\sc Strikwerda, J.~C.}
\newblock {\em Finite difference schemes and partial differential equations}.
\newblock SIAM, 2004.

\bibitem{succi2001lattice}
{\sc Succi, S.}
\newblock {\em The lattice {B}oltzmann equation: for fluid dynamics and
  beyond}.
\newblock Oxford University Press, 2001.

\bibitem{suga2010accurate}
{\sc Suga, S.}
\newblock An accurate multi-level finite difference scheme for {1D} diffusion
  equations derived from the lattice {B}oltzmann method.
\newblock {\em {Journal of Statistical Physics} 140}, 3 (2010), 494--503.

\bibitem{van2009smooth}
{\sc Van~Leemput, P., Rheinl{\"a}nder, M., and Junk, M.}
\newblock Smooth initialization of lattice {B}oltzmann schemes.
\newblock {\em {Computers \& Mathematics with Applications} 58}, 5 (2009),
  867--882.

\bibitem{warming1974modified}
{\sc Warming, R.~F., and Hyett, B.}
\newblock The modified equation approach to the stability and accuracy analysis
  of finite-difference methods.
\newblock {\em {Journal of Computational Physics} 14}, 2 (1974), 159--179.

\bibitem{yong2016theory}
{\sc Yong, W.-A., Zhao, W., Luo, L.-S., et~al.}
\newblock Theory of the lattice {B}oltzmann method: Derivation of macroscopic
  equations via the {M}axwell iteration.
\newblock {\em {Physical Review E} 93}, 3 (2016), 033310.

\bibitem{zhang2019lattice}
{\sc Zhang, M., Zhao, W., and Lin, P.}
\newblock {Lattice Boltzmann method for general convection-diffusion equations:
  MRT model and boundary schemes}.
\newblock {\em {Journal of Computational Physics} 389\/} (2019), 147--163.

\bibitem{zhao2017maxwell}
{\sc Zhao, W., and Yong, W.-A.}
\newblock Maxwell iteration for the lattice {B}oltzmann method with diffusive
  scaling.
\newblock {\em {Physical Review E} 95}, 3 (2017), 033311.

\bibitem{zwillinger2018crc}
{\sc Zwillinger, D.}
\newblock {\em {CRC} standard mathematical tables and formulas}.
\newblock Chapman and Hall - CRC, 2018.

\end{thebibliography}

\end{document}